\documentclass[11pt]{amsart}
\usepackage{amsmath,amssymb,latexsym,tikz}
\usepackage{mathrsfs,amsfonts}
\usepackage[colorlinks=true,urlcolor=blue,
citecolor=red,linkcolor=blue,linktocpage,pdfpagelabels,
bookmarksnumbered,bookmarksopen]{hyperref}
\usepackage[english]{babel}

\usepackage[left=3cm,right=3cm,top=3cm,bottom=3cm]{geometry}

\usepackage[hyperpageref]{backref}
\usepackage{mathrsfs}
\usepackage{lmodern}

\numberwithin{equation}{section}

\newtheorem{theorem}{Theorem}[section]
\newtheorem{lemma}[theorem]{Lemma}

\newtheorem{corollary}[theorem]{Corollary}
\theoremstyle{definition}

\newtheorem{remark}[theorem]{Remark}

\newcommand{\N}{{\mathbb N}}

\newcommand{\R}{{\mathbb R}}

\newcommand{\eps}{\varepsilon}

\newcommand{\cal}{\mathcal}

\def\XXint#1#2#3{{\setbox0=\hbox{$#1{#2#3}{\int}$ }
\vcenter{\hbox{$#2#3$ }}\kern-.6\wd0}}

\title{Asymptotics for optimizers of the fractional Hardy-Sobolev inequality}

\author[S.A.  Marano]{S.~A.\,  Marano}
\address[S.A.  Marano]{Dipartimento di Matematica e Informatica
\newline\indent
Universit\`a degli Studi di Catania
\newline\indent
Viale A. Doria 6 I-95125 Catania, Italy}
\email{marano@dmi.unict.it}

\author[S. Mosconi]{S.~J.~N.\,  Mosconi}
\address[S. Mosconi]{Dipartimento di Matematica e Informatica
\newline\indent
Universit\`a degli Studi di Catania
\newline\indent
Viale A. Doria 6 I-95125 Catania, Italy}
\email{mosconi@dmi.unict.it}

\subjclass[2010]{6E35, 35B40, 49K22}

\keywords{Fractional Hardy-Sobolev inequality, concentration-compactness, decay estimates, fractional $p$-Laplacian}

\thanks{The authors were partially supported by Gruppo Nazionale per l'Analisi Matematica, la Probabilit\`a e le loro Applicazioni (INdAM)}

\begin{document}
\begin{abstract}
The existence of optimizers $u$ in the space $\dot{W}^{s,p}(\R^N)$, with differentiability order $s\in\  ]0,1[$, for the Hardy-Sobolev inequality is established through concentration-compactness. The asymptotic behavior $u(x)\simeq |x|^{-\frac{N-ps}{p-1}}$ as $|x|\to+\infty$ and the summability information $u\in \dot{W}^{s,\gamma}(\R^N)$ for all $\gamma\in \ ]\frac{N(p-1)}{N-s}, p]$ are then obtained. Such properties turn out to be optimal when $s\to 1^-$, in which case optimizers are explicitly known.
\end{abstract}
\maketitle
\section{Introduction and main results}
This paper deals with the scale-invariant, nonlocal functional inequality 
\begin{equation}\label{HS}
\left(\int_{\R^N} \frac{|u|^q}{|x|^\alpha}\, dx\right)^{\frac{1}{q}}\leq C\left(\int_{\R^N\times\R^N}\frac{|u(x)-u(y)|^p}{|x-y|^{N+ps}}\, dx\, dy\right)^{\frac{1}{p}}
\end{equation}
for some constant $C>0$. Here, $N> \alpha \geq 0$, $q\geq p\geq 1$, and $s\in \ ]0,1[$ are determined by scale invariance. In order that $C$ be finite, one can write \eqref{HS} for $u_\lambda(x):=u(\lambda x)$, deducing
\begin{equation}\label{scalingrelation}
\frac{N-\alpha}{q}=\frac{N-ps}{p}.
\end{equation}
So, the constant $C$ depends on $N, p, s,\alpha$, namely $C:=C(N, p, s,\alpha)$. Further, since $q\geq p$, from \eqref{scalingrelation} we immediately infer
\[
0\leq \alpha\leq ps<N.
\]
If $q=p$ and $\alpha=ps<N$ then the classical Hardy fractional inequality is recovered. For $q=p^*$, where
\begin{equation}\label{pstar}
p^*:=\frac{Np}{N-ps},
\end{equation}
and $\alpha=0$ we obtain the fractional Sobolev inequality. The general expression \eqref{HS} is usually called fractional Hardy-Sobolev inequality; see \cite{Mazya}. 

It is well known that \eqref{HS} with $q=p>1$, whence $\alpha=ps$, does not admit optimizers and indeed the concentration-compactness method fails. More precisely, letting
\[
[u]_{s,p}^p:=\int_{\R^N\times\R^N}\frac{|u(x)-u(y)|^p}{|x-y|^{N+ps}}\, dx\, dy
\]
and
\begin{equation}\label{I}
I_\lambda:=\inf\left\{[u]_{s,p}^p:\int_{\R^N} \frac{|u|^q}{|x|^\alpha}\, dx=\lambda\right\}, \qquad \lambda>0,
\end{equation}
after scaling one has
\begin{equation}\label{Ilambda}
I_\lambda=\lambda^{\frac{p}{q}} I_1, 
\end{equation}
and thus the strict subadditivity condition $I_{\lambda+\mu}<I_\lambda+I_\mu$, which represents the main tool of concentration-compactness, holds only if $q>p$. In such a case, existence of optimizers has mostly been taken as granted thanks to \cite[Remark I.6]{Lions1}. For $p=2$, a full proof has been done in \cite{PP}, with $\alpha=0$, exploiting a refined version of Sobolev's embedding (obtained via Morrey spaces), and in \cite{Y}, where $\alpha\in [0, ps[$. We will instead establish the existence of optimizers for general $p>1$ through the original Lions' approach, accordingly considering (as a natural non-local counterpart of $|\nabla u|^p(x)$) the energy-density function
\begin{equation}\label{defDsup}
|D^s u|^p(x):=\int_{\R^N}\frac{|u(x)-u(x+h)|^p}{|h|^{N+ps}}\, dh.
\end{equation}
As it turns out, the proof is quite involved at times, mainly due to the fact that non-local interactions arise when one analyzes dichotomy and/or concentration. They are typical of non-local problems and, to treat them, we will use estimates having no analogue in the local framework.

Once existence is achieved, standard rearrangement inequalities ensure that the minimizers are radially monotonic. Hence, a natural conjecture is whether the family of minimizers consists of constant multiples, translations, and dilations of the function
\begin{equation}\label{talentiane}
U(x):=\frac{1}{(1+|x|^{\frac{p-\alpha/s}{p-1}})^{\frac{N-sp}{p-\alpha/s}}},\quad x\in\R^N,
\end{equation}
which coincides with the classical Aubin-Talenti function provided $s=1$, $\alpha=0$. Such a conjecture has been proved in \cite{GY} when $s=1$, $p>1$, $\alpha\in [0, p[$ through Bliss inequality, and in \cite{CLO} if $s\in\ ]0, 1[$, $p=2$, $\alpha=0$. However, up to now, the explicit form of optimizers is not known for general $s\in \ ]0, 1[$ and $p\neq 2$. Notice that, contrary to the local case, where a simple ODE argument applies, when $s\in \ ]0, 1[$ it is not even clear how to show that \eqref{talentiane} at least solves the corresponding Euler-Lagrange equation.

To attack the problem of finding minimizers in \eqref{I}, a first step might be to check compatibility of the {\em a priori} asymptotic behavior of minimizers with the one exhibited by \eqref{talentiane}, i.e., 
\[
U(R)\simeq\frac{1}{R^{\frac{N-ps}{p-1}}},\quad \nabla U(R)\simeq \frac{1}{R^{\frac{N-ps}{p-1}+1}},\quad R\to+\infty.
\]
Concerning the first estimate, we will show that any minimizer $u$ actually obeys the same asymptotics as $U$. Relevant arguments are patterned after those of \cite{BMS}, where $\alpha=0$. The second estimate clearly requires much higher regularity than the natural one for $u$, which seems out of reach when $s$ is very small. Accordingly, we will consider an appropriate weaker version that, if $s=1$, reads as
\begin{equation}
\label{decloc}
\nabla U \in L^\gamma(\R^N)\quad \Leftrightarrow\quad \gamma\in \ ]\frac{N(p-1)}{N-1}, p].
\end{equation}
Since one is interested in {\em decay} estimates, the {\em lowest possible} summability exponent of $\nabla U$ has to be sought out. For $s\in \ ]0, 1[$, we first observe that the asymptotic behavior 
\[
v(R)\geq \delta\,  R^{-\frac{N-ps}{p-1}}, \qquad \delta>0, \quad R\geq 1,
\]
combined with  Hardy's inequality (which holds for any $\gamma\in ]0, N/s[$; see \cite[Theorem 1.1]{D})
\[
[v]_{s,\gamma}^\gamma\geq \frac{1}{C^\gamma}\int_{\R^N} \frac{|u|^\gamma}{|x|^{\gamma s}}\, dx\geq \frac{\delta^\gamma}{C^\gamma}\int_{\R^N\setminus B_1}|x|^{-\gamma\frac{N-s}{p-1}}\, dx
\]
produces the information
\[
[v]_{s, \gamma}<+\infty \quad \Rightarrow\quad \gamma>\frac{N(p-1)}{N-s}.
\]
This obviously applies to the function $U$ defined in \eqref{talentiane} and, by the obtained asymptotics, to any  optimizer $u$ of \eqref{I}  as well. Theorem \ref{MT} below ensures that the opposite implication holds true both for every minimizer $u$ and for $U$. The condition $\gamma \in\ ] \frac{N(p-1)}{N-s}, p]$ is thus optimal in the decay sense. 

Our motivation does not rely only on the asymptotic compatibility of the conjectured form \eqref{talentiane} of minimizers. Indeed, the decay estimate
\begin{equation}\label{dec}
u(x)\simeq \frac{1}{|x|^{\frac{N-ps}{p-1}}}, \quad |x|\to+\infty
\end{equation}
has proved to be useful for treating the nonlinear, critically perturbed, eigenvalue problem
\[
\begin{cases}
(-\Delta_p)^s u=\lambda |u|^{p-2}u+ |u|^{p^*-2}u&\text{in $\Omega$},\\
u\equiv 0&\text{in $\R^N\setminus \Omega$}
\end{cases}
\]
analogous to the Brezis-Nirenberg one. Here, $p^*$ is given by \eqref{pstar} while $(-\Delta_p)^s$ denotes the fractional $p-s$ Laplacian, defined as the differential of $u\mapsto \frac 1 p [u]_{s,p}^p$. Very recently, existence results have been obtained in \cite{MPSY} chiefly through cutoff and rescaling of solutions to \eqref{I}, using only the scaling properties of $u_\eps$ and the pointwise decay \eqref{dec}.

However, when we deal with more general operators of mixed order, as the $(p,q)$-Laplacian, a precise estimate at other, less natural, differentiability scales (e.g., $\|\nabla u\|_{q}$ if $q<p$) is essential. Such information has been achieved in \cite{DH}, provided $s=1$, $\alpha=0$, through the explicit form of minimizers for \eqref{I}. The corresponding results have been extensively exploited to treat mixed critical problems; see \cite{CMP, YY} and the references therein. We therefore plan to apply the $s$-derivative decay estimate established below to analogous mixed fractional order problems in future works.

The results of the paper can be summarized as follows.
\begin{theorem}\label{MT}
Let $N>ps$, $q>p$, and $\alpha\in  [0, ps[$ satisfy \eqref{scalingrelation}. Then:
\begin{itemize}
\item  Problem \eqref{I} has a minimizer.
\item Every minimizer $u$ is of constant sign, radially monotone, and fulfills
\begin{equation}\label{as1}
\frac{1}{C|x|^{\frac{N-ps}{p-1}}}\leq |u(x)|\leq \frac{C}{|x|^{\frac{N-ps}{p-1}}},\quad |x|\geq 1,
\end{equation}
for some constant $C:=C(N, p, s, \alpha, u)$, as well as 
\begin{equation}\label{as2}
\int_{\R^N\times \R^N}\frac{|u(x)-u(y)|^\gamma}{|x-y|^{N+\gamma s}}\, dx\, dy<+\infty\quad \forall\,\gamma\in\  \left] \frac{N(p-1)}{N-s}, p\right].
\end{equation}
\item Estimates \eqref{as1}--\eqref{as2} hold for the function $U$ defined in \eqref{talentiane} and thus for any translation, multiple, and rescaling of it.
\end{itemize}
\end{theorem}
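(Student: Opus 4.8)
The plan is to prove Theorem \ref{MT} in three stages corresponding to its three bullet points, treating existence first, then the qualitative and asymptotic properties of a generic minimizer, and finally verifying that the model function $U$ from \eqref{talentiane} belongs to the same class.

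\textbf{Existence via concentration-compactness.} I would take a minimizing sequence $(u_n)$ for $I_1$ in \eqref{I}, normalized so that $\int_{\R^N}|u_n|^q|x|^{-\alpha}\,dx=1$ and $[u_n]_{s,p}^p\to I_1$. By the rearrangement (Pólya–Szegő) inequality for the Gagliardo seminorm, one may assume each $u_n$ is nonnegative, radially symmetric and radially nonincreasing, which does not increase $[u_n]_{s,p}^p$ and leaves the denominator in \eqref{HS} unchanged (the weight $|x|^{-\alpha}$ is itself radially nonincreasing). Then I apply Lions' concentration-compactness lemma to the sequence of measures $d\mu_n:=|D^su_n|^p(x)\,dx$ built from the energy density \eqref{defDsup}, together with the tight sequence $d\nu_n:=|u_n|^q|x|^{-\alpha}\,dx$. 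The three alternatives — vanishing, dichotomy, compactness — must be ruled down to compactness. Vanishing is excluded because it would force $\int|u_n|^q|x|^{-\alpha}\to 0$, contradicting the normalization; here the non-local nature means one controls $L^q$-norms on balls by the local energy density plus a tail interaction term, and the paper has flagged that these tail interactions require estimates with no local analogue. Dichotomy is excluded by the strict subadditivity $I_{\lambda+\mu}<I_\lambda+I_\mu$, which by \eqref{Ilambda} is equivalent to $q>p$; the subtlety is that splitting $u_n$ along a partition of unity produces a non-local cross term $\int\!\!\int_{|x-y|\ge R}\cdots$ that must be shown to vanish as the supports separate, and this is where I expect the main technical effort to lie. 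Once compactness holds, $u_n\to u$ strongly in the weighted $L^q$ space and weakly in $\dot W^{s,p}$, so by weak lower semicontinuity $u$ is a minimizer.

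\textbf{Qualitative properties and the two-sided bound \eqref{as1}.} Constant sign and radial monotonicity follow from the rearrangement step above applied now to the minimizer itself. For the decay estimate \eqref{as1} I would follow the strategy of \cite{BMS}: the minimizer solves (up to a Lagrange multiplier) the Euler–Lagrange equation $(-\Delta_p)^su=c\,|u|^{q-2}u|x|^{-\alpha}$ in the weak sense; one first establishes an $L^\infty$ bound and interior regularity away from the origin by Moser/De Giorgi iteration adapted to the fractional $p$-Laplacian, then bootstraps to obtain the upper bound $u(x)\le C|x|^{-(N-ps)/(p-1)}$ by comparison with an explicit supersolution of the homogeneous equation $(-\Delta_p)^s w=0$ that behaves like $|x|^{-(N-ps)/(p-1)}$ at infinity (this exponent is exactly the one making such a power a fractional $p$-harmonic profile). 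The matching lower bound comes from a comparison with a suitable subsolution, using that $u>0$ and the long-range contribution of the fractional operator — the term $\int_{|h| \text{ large}} u(x+h)|h|^{-N-ps}\,dh$ — is bounded below by a positive multiple of $|x|^{-(N-ps)/(p-1)}$ because $u$ does not decay faster than this on most of the annuli, a self-improving argument. The place to be careful is that, unlike the local case, one cannot invoke an ODE; all comparisons must be carried out with the nonlocal operator, controlling the "tail" integrals over the region where the comparison function is not yet defined or not ordered.

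\textbf{The summability statement \eqref{as2} and transfer to $U$.} Given the two-sided bound \eqref{as1}, I would prove \eqref{as2} by splitting the Gagliardo double integral for exponent $\gamma\in\ ]\frac{N(p-1)}{N-s},p]$ into a near-diagonal piece and a far piece. On $|x-y|\le 1$ one writes $|u(x)-u(y)|^\gamma\le |u(x)-u(y)|^{p\cdot\theta}|u(x)-u(y)|^{(1-\theta)\gamma p/(p-\gamma\theta)}\cdots$; more cleanly, one uses $|u(x)-u(y)|\le$ a bound in terms of $|x-y|$ times $|D^su|$-type quantities plus Hölder, together with the already-known fact $u\in\dot W^{s,p}$, to absorb the near-diagonal part for all $\gamma\le p$. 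The far piece $|x-y|\ge 1$ is controlled by $\int\!\!\int_{|x-y|\ge1}(|u(x)|^\gamma+|u(y)|^\gamma)|x-y|^{-N-\gamma s}\,dx\,dy\le C\int_{\R^N}|u(x)|^\gamma\,dx$, which is finite precisely when $\int_{|x|\ge1}|x|^{-\gamma(N-ps)/(p-1)}\,dx<\infty$, i.e. $\gamma>\frac{N(p-1)}{N-ps}$; but the sharper Hardy-based heuristic in the introduction shows one can do better and reach the stated threshold $\frac{N(p-1)}{N-s}$, which requires using the full seminorm rather than the crude pointwise split — concretely, interpolating the far-field decay of $u$ against $\dot W^{s,p}$ regularity. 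Finally, for the last bullet, I note that $U$ in \eqref{talentiane} is radial, positive, nonincreasing, and satisfies $U(x)\simeq|x|^{-(N-ps)/(p-1)}$ by direct inspection, so it obeys \eqref{as1}; and $U\in\dot W^{s,p}(\R^N)$ must be checked (again a near-diagonal plus far-field split on the explicit profile), after which the very same argument used for a generic minimizer yields \eqref{as2} for $U$. Invariance of both \eqref{as1} and \eqref{as2} under translation, dilation and scalar multiples is immediate from the scale-invariance built into \eqref{HS} and \eqref{scalingrelation}. I expect the dichotomy step in the existence proof and the nonlocal lower-bound comparison in \eqref{as1} to be the two genuine obstacles; the rest is careful bookkeeping with fractional seminorms.
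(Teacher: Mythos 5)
The existence step and the constant-sign/radial-monotonicity step in your outline follow the paper's route in spirit (concentration-compactness with nonlocal corrections, rearrangement), and for \eqref{as1} your barrier strategy diverges from the paper: there the upper bound is obtained with no comparison functions at all, by showing that the right-hand side $f=I_1u^{q-1}|x|^{-\alpha}$ of the Euler--Lagrange equation lies in $L^1(\R^N)$ (Lemma \ref{luno}), deducing a weak-$L^{p^*/p'}$ bound from Theorem \ref{Rlemma}, and converting it into pointwise decay for radial nonincreasing functions (Corollary \ref{cordecay}); the lower bound is taken from the argument of \cite[Corollary 3.7]{BMS} together with positivity and continuity from \cite{IMS}. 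Your comparison argument leans on $|x|^{-(N-ps)/(p-1)}$ being an exact $(s,p)$-harmonic profile away from the origin, which is not available for general $p\neq 2$ (the paper even remarks that it is unclear whether \eqref{talentiane} solves the Euler--Lagrange equation), so that part of your plan is not merely different but unsupported.

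The genuine gap, however, is in \eqref{as2}. Your near-diagonal step --- absorbing the region $|x-y|\le 1$ for all $\gamma\le p$ using H\"older and $u\in\dot W^{s,p}$ --- fails: H\"older with exponents $p/\gamma$ and $(p/\gamma)'$ leaves the divergent factor $\int_{|h|\le 1}|h|^{-N}\,dh$, and indeed $W^{s,p}(B_R)\not\subset W^{s,\gamma}(B_R)$ for any $1\le\gamma<p$ (the Mironescu--Sickel non-embedding \cite{MiS}), which is precisely the obstruction the paper flags. Your far-field estimate yields only $\gamma>N(p-1)/(N-ps)$, and you acknowledge that reaching the sharp threshold $N(p-1)/(N-s)$ needs more, but ``interpolating the far-field decay against $\dot W^{s,p}$'' is not a mechanism. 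The paper's actual mechanism, absent from your sketch, is: (i) $u\in L^\infty$ and $f\in L^1\cap L^{(p^*)'}$ (Lemmas \ref{luno}, \ref{final}); (ii) a higher differentiability estimate $u\in\dot B^{\sigma}_{p,\infty}$ with $\sigma>s$, obtained by a difference-quotient iteration on the equation (Lemma \ref{regest}); (iii) the horizontal dyadic layer decomposition $u=\sum_i u_i$ with ${\rm supp}(u_i)\subseteq B_{2^i}$, where $[u_i]_{s,p}^p$ is estimated by testing the equation with $u_i$ and invoking the decay \eqref{as1}; (iv) the interpolation inequality of Lemma \ref{intbesov}, which needs the Besov gain from (ii) and the compact support from (iii), converting this into $[u_i]_{s,\gamma}\le C\,2^{iNa_\mu}$ with $a_\mu<0$ exactly when $\gamma>N(p-1)/(N-s)$; and (v) summation of the resulting series, via Fatou and the subadditivity of $t\mapsto t^\gamma$ when $\gamma<1$ --- a case your proposal never addresses, although the admissible range of $\gamma$ can drop below $1$ and the triangle inequality then fails. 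Without ingredients (ii)--(iv) your argument cannot reach the stated range of $\gamma$, either for minimizers or for $U$.
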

\vskip5pt
{\bf Sketch of proof}.
Since the main novelty is \eqref{as2}, it may be instructive to look at a simple proof of \eqref{decloc} {\em without knowing the minimizer's explicit form}.

When $\alpha=0$, nonnegative minimizers satisfy the Euler-Lagrange equation
\begin{equation}\label{pik}
-\Delta_p u=  c\, u^{p^*-1}=:f.
\end{equation}
Then a variant of the Strauss lemma for radially decreasing functions yields the decay estimate $u(x)\leq C|x|^{-\frac{N-p}{p-1}}$, with large $|x|$, provided $f\in L^1(\R^N)$ (a nontrivial fact at the global level). 

To prove \eqref{decloc}, we first decompose $u$ in its horizontally dyadic components, given by slicing $u$ at heights $u(2^i)$:
\[
u=\sum_{i=0}^{+\infty} u_i, \quad\mbox{where}\quad  0\leq u_i\leq u(2^{i-1})\, \chi_{B_{2^i}}\, ,\quad i\geq 1.
\]
We avoid here more involved arguments and assume $\gamma\geq 1$. Thus, on account of the triangle inequality, it suffice to estimate $\|\nabla u_i\|_{L^\gamma}$ separately. As ${\rm supp}(u_i)\subseteq B_{2^i}$ and $\gamma\leq p$, H\"older's inequality gives 
\begin{equation}\label{nMin}
\|\nabla u_i\|_{L^\gamma}\leq |B_{2^i}|^{1-\frac{\gamma}{p}}\, \|\nabla u_i\|_{L^p}\leq C\, 2^{iN(1-\frac{\gamma}{p})}\, \|\nabla u_i\|_{L^p}.
\end{equation}
Now, since $\nabla u_i=\nabla u$ a.e. in $B_{2^i}\setminus B_{2^{i-1}}$, $\nabla u_i=0$ a.e. in $B_{2^{i-1}}$, and $u_i\leq u(2^{i-1})$, testing \eqref{pik} with $u_i$ we obtain
\[
\|\nabla u_i\|_{L^p}^p=\langle -\Delta_p u, u_i\rangle=\int_{\R^N} f \, u_i\, dx\leq \|f\|_{L^1}\, u(2^{i-1})\leq C\, \|f\|_{L^1}\, 2^{-i\frac{N-p}{p-1}},
\]
where the decay estimate has been used in the last inequality. Finally, raise to the $1/p$-power and insert inside \eqref{nMin}, to achieve
\[
\|\nabla u\|_{L^\gamma}\leq \sum_{i=0}^{+\infty}\|\nabla u_i\|_{L^\gamma}\leq C\, \|f\|_{L^1}^{\frac{1}{p}}\sum_{i=0}^{+\infty}2^{iN(1-\frac{\gamma}{p})}\, 2^{-i\frac{N-p}{p(p-1)}},
\]
which is finite as long as $\gamma>N(p-1)/(N-1)$.
\vskip5pt
{\bf Difficulties}.
Two main issues arise in trying to reproduce the previous proof for the fractional case $s\in \ ]0, 1[$. 

The first one is technical, because in order to obtain the best summability lower bound $\gamma>N(p-1)/(N-s)$ we may have to deal with exponents $\gamma$  less than $1$. Not only the triangle inequality fails in such a case but, more importantly, there does not seem to exist a satisfying interpolation theory for the concrete spaces $W^{s,\gamma}(\R^N)$ (contrary to the interpolation theory for Besov-Lizorkin spaces, mainly due to Peetre in the low summability case).

The second one, however, is substantial. Inequality \eqref{nMin} evidently implies the natural embedding $W^{1,p}(B_R)\hookrightarrow W^{1,\gamma}(B_R)$ for $p\geq \gamma$, which is actually {\em false} in the fractional case, as an example of Mironescu and Sickel \cite{MiS} shows. Indeed, for any $s\in\  ]0, 1[$ and any $p>\gamma\geq 1$, the space $W^{s,p}(B_R)$ is never a subset of $W^{s,\gamma}(B_R)$. This forces a weakening of \eqref{nMin} through an interpolation inequality and a higher differentiability estimate in Besov spaces for the minimizers.

\vskip5pt
{\large{\bf Outline of the paper}}.
Let us finally discuss the structure of the paper. Section 2 is devoted to  framework and known tools that will be employed for proving Theorem \ref{MT}.

The existence part is performed in Section 3 via concentration-compactness. We remark that other approaches are available, as, e.g., the one of Lieb \cite{lieb}, based on rearrangements. However, Lion's technique seems viable to treat (more general) situations where rearrangement is not available. In developing the concentration-compactness scheme for nonlocal problems, two main difficulties arise. Ruling out dichotomy for minimizing sequences is quite delicate, since splitting a function through cutoffs gives rise to nonlocal effects, which have to be precisely quantified. This is done by observing that the smallness of $|D^su|^p$ as per \eqref{defDsup}, contrary to the local case, entails strong {\em global} information on $u$; for instance, if $|D^s u|^p$ vanishes at some point then $u$ must be constant. The quantitative estimate needed to rule out dichotomy is Lemma \ref{lemmaint} below. This technical result deals with the loss of compactness due to translation (precisely in the dichotomy case), while the other difficulty lies in the nonlocal effects stemming from the loss of compactness due to dilations, i.e., concentration.  However, in this respect, the relevant argument has been derived in \cite[Theorem 2.5]{MS} and we refer to the discussion therein for further details.

Section 4 involves some general regularity results for the model equation $(-\Delta_p)^su=f$ on the entire space. We will be concerned with both summability and differentiability estimates. The former are more or less already available in the literature, although the fact that we work on the whole $\R^N$ requires some care. The higher differentiability of solutions to the model equation has been treated only recently in \cite{BL} in the superquadratic case, assuming various {\em differentiability} hypotheses on the forcing term $f$. We are then going to refine and extend the techniques of \cite{BL} to obtain a higher Besov regularity result solely under suitable {\em summability} assumptions on $f$; see Lemma \ref{regest}. 

Finally, Section 5 contains the proof of Theorem \ref{MT}. As outlined before, the main tool is an $L^1$ estimate of the forcing term of the corresponding Euler-Lagrange equation for minimizers, which readily implies the decay estimate \eqref{as1}. Relevant arguments are patterned after \cite{BMS}. To show \eqref{as2}, we decompose a given minimizer into its horizontally dyadic components and evaluate the corresponding $W^{s, \gamma}$ terms separately. An interpolation inequality (cf. Lemma \ref{intbesov}), together with the slightly better differentiability properties of minimizers, ensure that the $W^{s,\gamma}$ energy of the dyadic components can (almost entirely) be controlled in terms of their $W^{s, p}$ norm. Exploiting \eqref{as1} to estimate them via the Euler-Lagrange equations, produces \eqref{as2}. 
\section{Preliminary material}

Let us first fix some notation. If $p> 1$, we put $p':=p/(p-1)$ while $p':=\infty$ when $p=1$. Denote by $B_r(x)$ the open ball of center $x$ and radius $r>0$ in $\R^N$. If the center is not specified then it is to be understood as zero, i.e., $B_r:=B_r(0)$. Given any Lebesgue measurable set $E\subseteq \R^N$, $\chi_E$ denotes its indicator function, $|E|$ its Lebesgue measure, and $E^c:=\R^N\setminus E$. Finally, $\omega_N:=|B_1(0)|$. 

The {\em symmetric-decreasing rearrangement} of a measurable function $u:\R^N\to \R_+$ is, by definition, the radial function  $u^*(x)=u^*(|x|)$ such that $u^*$ is non-increasing, right continuous with respect to $r:=|x|$, and 
\[
|\{u^*>t\}|=|\{u>t\}|\quad \text{for a.e. $t\geq 0$}.
\]
From now on, the dimension $N\geq 1$ will be fixed, $s\in \ ]0,1[$, and $p\geq 1$ will satisfy $ps<N$. Moreover, $p^*$ denotes the fractional critical exponent, namely $p^*:=Np/(N-ps)$.

Let $u:\R^N\to\R$ be measurable. We say that $u$ {\em vanishes at infinity} if
 \begin{equation}\label{vi}
|\{|u|>a\}|<+\infty\quad \text{for all $a>0$}.
\end{equation} 
Define 
\[
|D^s u|^p(x)=\int_{\R^N}\frac{|u(x)-u(y)|^p}{|x-y|^{N+ps}}\, dy,\quad x\in\R^N.
\]
Elementary inequalities ensure that for all measurable $v,w:\R^N\to\R$ one has
\begin{equation}\label{Liebn}
\int_{\R^N}|D^s(v\, w)|^p\, dx\leq 2^{p-1}\int_{\R^N} \left(|v|^p\, |D^s w|^p+ |w|^p\, |D^s v|^p\right) dx.
\end{equation}
Next, set
\[
[u]_{s,p}:=\left(\int_{\R^N\times\R^N}\frac{|u(x)-u(y)|^p}{|x-y|^{N+ps}}\, dx\, dy\right)^{\frac{1}{p}}
\]
in addition to
\[
\|u\|_{\alpha, q}:=\left(\int_{\R^N} \frac{|u|^q}{|x|^\alpha}\, dx\right)^{\frac{1}{q}},
\]
where $\alpha\in [0, N[$ and $q\geq 1$. Obviously, $\|u\|_q:=\|u\|_{0, q}$. The symbol $\dot{W}^{s,p}(\R^N)$ denotes the homogeneous Sobolev space 
\[
\dot{W}^{s,p}(\R^N):=\{u: \text{$u$ is measurable, \eqref{vi} holds, and}\  [u]_{s,p}<+\infty\},
\]
while for any $\Omega$ open (not necessarily bounded) subset of $\R^N$ we let
\[
W^{s,p}_0(\Omega):=\{u\in \dot{W}^{s,p}(\R^N): u\equiv 0 \, \text{ a.e. in $\Omega^c$}\}.
\]
Clearly, $\dot{W}^{s,p}(\R^N)=W^{s,p}_0(\R^N)$. If $p>1$ then both ${\dot W}^{s,p}(\R^N)$ and $W^{s,p}_0(\Omega)$ are reflexive Banach spaces with respect to the norm $[\cdot ]_{s,p}$. Moreover, $C^\infty_c(\R^N)$ is a dense subspace of them provided $\Omega=\R^N$ or $\Omega$ is bounded and smooth. 

$\dot{W}^{s,p}(\R^N)$ falls inside the wider class of Besov spaces, whose definition we now recall. For any $h\in \R^N\setminus\{0\}$ and measurable $g:\R^N\to \R$, put
\[
g_h(x):=g(x+h),\quad  \delta_h g(x):=g_h(x)-g(x),\quad  \delta^2_h g:=\delta_{-h}(\delta_h g)=\delta_h(\delta_{-h} g).
\]
Given $1\leq p<+\infty$, $1\leq q\leq+\infty$, and $s\in \ ]0, 2[\ \setminus \{1\}$, the classical (homogeneous) Besov space is
\[
\dot B^{s}_{p, q}(\R^N):=\left\{u: \text{\eqref{vi} holds and} \ [u]_{B^{s}_{p,q}}^p:=\int_{\R^N}\left(\int_{\R^N}\left|\frac{\delta^2_hu(x)}{|h|^s}\right|^p dx\right)^{q/p}.\frac{dh}{|h|^N}<+\infty\right\}
\]
when $q<+\infty$, while 
\[
\dot B^{s}_{p, \infty}(\R^N):=\left\{u: \text{\eqref{vi} holds and} \ [u]_{B^{s}_{p,\infty}}^p:=\sup_{h\neq 0}\int_{\R^N}\left|\frac{\delta^2_hu(x)}{|h|^s}\right|^p dx<+\infty\right\}
\]
for $q=+\infty$. Recall that $[\cdot ]_{B^{s}_{p, q}}$ turns out to be a norm on $\dot{B}^{s}_{p, q}(\R^N)$ and that $(\dot{B}^{s}_{p, q}(\R^N),[\cdot ]_{B^{s}_{p, q}})$ is complete. For larger values of $s$, which we don't need in the sequel, the norm actually involves higher order differences $\delta^m_hu$, where $m>s$. Finally, if $s\in \ ]0, 1[$ then 
$$\dot B^{s}_{p, p}(\R^N)=\dot W^{s,p}(\R^N),$$
and the respective norms are equivalent; see, e.g., \cite[Chapter 10]{Mazya}. In such a case, by a simply changing variables, one has
\[
[u]_{s,p}^p=\int_{\R^N}\left\|\frac{\delta_hu}{|h|^s}\right\|_p^p \frac{dh}{|h|^N}.
\]

Given an arbitrary nonempty open set $\Omega\subseteq \R^N$, the functional $u\mapsto \frac 1 p [u]_{s,p}^p$, $u\in W^{s,p}_0(\Omega)$,  turns out to be convex and differentiable provided $p>1$. Its differential $(-\Delta_p)^su$ at any point $u$ lies in
$$W^{-s,p'}(\Omega):=\big(W^{s,p}_0(\Omega)\big)^*.$$
One clearly has $L^{(p^*)'}(\R^N)\hookrightarrow W^{-s, p'}(\R^N)$, because 
$$W^{s,p}_0(\Omega)\hookrightarrow\dot W^{s,p}(\R^N)\hookrightarrow L^{p^*}(\R^N).$$
Thus, the equation $(-\Delta_p)^s u=f$ makes sense (weakly) in $\Omega$ for all $f\in L^{(p^*)'}(\R^N)$. Here, we will be concerned with \emph{more general right-hand sides}. Precisely, let $f\in L^{1}_{\rm loc}(\R^N)$. We say that $u\in W^{s,p}_0(\Omega)$ is a weak solution of $(-\Delta_p)^su=f$ provided
\begin{equation}\label{locweak}
\langle (-\Delta_p)^s u, \varphi\rangle=\int_{\R^N} f\, \varphi\, dx\quad \forall\,\varphi\in W^{s,p}_0(\Omega)
\mbox{ such that } f\,\varphi\in L^1(\R^N),
\end{equation}
where $\langle \ , \ \rangle$ denotes the duality pairing between $W^{-s, p'}(\Omega)$ and $W^{s,p}_0(\Omega)$. Every $\varphi\in W^{s,p}_0(\Omega)$ fulfilling $f\,\varphi\in L^1(\R^N)$ will be called a \emph{suitable test function}. If $\Omega=\R^N$ and
$\varphi\in\dot{W}^{s,p}(\R^N)$ is bounded and has a finite measure support then $\varphi$ turns out to be a suitable test function.

Let us now recall some facts about the space $\dot{W}^{s,\gamma}(\R^N)$ with $\gamma\in\ ]0, 1[$. One can introduce again the vector space
\[
\dot{W}^{s, \gamma}(\R^N)=\{u: \text{$u$ is measurable, \eqref{vi} holds and } [u]_{s,\gamma}<+\infty\},
\]
but it isn't a Banach space and, for sufficiently small $\gamma>0$, its elements may fail to be locally integrable and therefore to be distributions. On the other hand, the Besov spaces $\dot{B}^s_{\gamma, q}(\R^N)$ for $s,q,\gamma>0$ can still be defined via suitable decay properties of the Littlewood-Paley decomposition, giving rise to a space of distributions (which may contain singular measures). However, unless $\gamma>\frac{N}{N+s}$, it is no longer true that $\dot{W}^{s, \gamma}(\R^N)=\dot{B}^{s}_{\gamma, \gamma}(\R^N)$; see \cite[Section 2.5.12]{T} for more details.

Regarding the basic properties of problem \eqref{HS}, we start by pointing out that, in order to seek optimizers in \eqref{HS}, radial functions suffice.

\begin{lemma}\label{radiality}
Suppose $u\in\dot W^{s, p}(\R^N)$ is a minimizer of \eqref{I}. Then $u$ turns out to be radially non-increasing around some point, which is zero if $\alpha>0$.
\end{lemma}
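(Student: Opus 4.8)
The plan is to use the classical rearrangement machinery adapted to the fractional Gagliardo seminorm, together with the observation that the weight $|x|^{-\alpha}$ is itself symmetric-decreasing. First I would recall the \emph{fractional P\'olya–Szeg\H{o} inequality}: if $u\in\dot W^{s,p}(\R^N)$ vanishes at infinity and $u^*$ denotes its symmetric-decreasing rearrangement, then $u^*\in\dot W^{s,p}(\R^N)$ and $[u^*]_{s,p}\le [u]_{s,p}$; this is standard (it follows, e.g., from the Riesz rearrangement inequality applied to the Gagliardo double integral written via the heat-kernel or directly, see Almgren–Lieb or \cite{Mazya}). Next I would record the \emph{companion gain} for the right-hand-side functional: since $t\mapsto |t|^q$ is nondecreasing on $[0,\infty)$ and $x\mapsto |x|^{-\alpha}$ is symmetric-decreasing, the Hardy–Littlewood inequality gives
\[
\int_{\R^N}\frac{|u|^q}{|x|^{\alpha}}\,dx\le \int_{\R^N}\frac{(u^*)^q}{|x|^{\alpha}}\,dx,
\]
with equality only when $|u|$ is (a.e.) already a symmetric-decreasing function about the origin.

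With these two facts the argument is immediate for $\alpha>0$. Let $u$ be a minimizer of \eqref{I}, normalized so that $\|u\|_{\alpha,q}^q=\lambda$. Replacing $u$ by $|u|$ does not increase $[u]_{s,p}$ and does not change $\|u\|_{\alpha,q}$, so we may assume $u\ge 0$. Then $v:=u^*$ satisfies $[v]_{s,p}\le[u]_{s,p}=I_\lambda^{1/p}$ while $\|v\|_{\alpha,q}^q\ge\|u\|_{\alpha,q}^q=\lambda$; rescaling $v$ slightly down to meet the constraint exactly can only decrease $[v]_{s,p}$ further (using \eqref{Ilambda}, or just homogeneity), so $v$ is again a minimizer and in fact forces equality throughout. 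Equality in P\'olya–Szeg\H{o} together with equality in Hardy–Littlewood then pins $u$ down: the Hardy–Littlewood equality case shows $u$ agrees a.e. with a symmetric-decreasing function centered at $0$, i.e. $u=u^*$ up to a null set, hence $u$ is radially non-increasing about the origin. For $\alpha>0$ this is exactly the claim.

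For $\alpha=0$ the weight is translation-invariant, so the center is not distinguished and one can only conclude radial monotonicity about \emph{some} point. Here I would argue as follows: the minimizer $u$, after replacing by $|u|$, still satisfies $[u^*]_{s,p}\le[u]_{s,p}$ and $\|u^*\|_{p^*}=\|u\|_{p^*}$ by equimeasurability, so $u^*$ is a minimizer and $[u^*]_{s,p}=[u]_{s,p}$. One then invokes the equality case of the fractional P\'olya–Szeg\H{o} inequality — which, as in the local setting (Brothers–Ziemer type results, available in the fractional case, e.g. in the literature on rearrangements of Gagliardo seminorms) — forces $u$ to be a translate of a symmetric-decreasing function, provided the level sets of $u$ have negligible "plateau" measure; for a minimizer this regularity is available because $u$ solves an Euler–Lagrange equation of the form $(-\Delta_p)^s u = c\,u^{p^*-1}$, and strong-maximum-principle/regularity considerations exclude flat parts. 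This is the only subtle point.

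The main obstacle I anticipate is precisely the \textbf{equality case of the fractional P\'olya–Szeg\H{o} inequality} when $\alpha=0$: unlike the gradient case, the nonlocal seminorm does not localize, so ruling out non-radial minimizers of the same seminorm requires either a nonlocal co-area/Brothers–Ziemer analysis or an additional input (e.g. that minimizers are positive and solve the Euler–Lagrange equation, then using unique continuation or the moving-plane-free structure of the rearrangement). I would handle it by (i) quoting the known fractional P\'olya–Szeg\H{o} equality characterization if available in the cited references, or otherwise (ii) reducing to it via the Euler–Lagrange equation and the a priori regularity of minimizers, noting that the case $\alpha>0$ — where the weight itself breaks translation invariance — needs only the strict Hardy–Littlewood inequality and is therefore clean. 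Everything else (passing to $|u|$, the rescaling to restore the constraint, equimeasurability) is routine.
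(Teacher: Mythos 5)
Your treatment of $\alpha>0$ is exactly the paper's argument and is fine: combine the fractional P\'olya--Szeg\H{o} inequality with the Hardy--Littlewood inequality for the strictly symmetric-decreasing weight $|x|^{-\alpha}$ (Lieb--Loss, Theorem 3.4), note that after renormalizing $u^*$ to the constraint it is again a minimizer, and use the strict equality case of Hardy--Littlewood to force $u=u^*$ centered at the origin. Passing to $|u|$ first is also routine, as you say.

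The gap is in the case $\alpha=0$. You correctly isolate the equality case of the fractional P\'olya--Szeg\H{o} inequality as the crux, but the way you propose to close it does not hold up: a fractional Brothers--Ziemer theorem is not a standard quotable result, and your fallback --- excluding ``plateaus'' of the minimizer via the Euler--Lagrange equation $(-\Delta_p)^s u=c\,u^{p^*-1}$ --- does not work as sketched, because on a level set of positive measure the nonlocal operator does not vanish (it sees the values of $u$ everywhere), so no contradiction with the equation arises; moreover even the local Brothers--Ziemer characterization requires control of the set where the gradient of $u^*$ vanishes, which is unavailable at the natural regularity of a minimizer. The point you are missing --- and the one the paper leans on --- is that in the nonlocal setting the equality case is \emph{cleaner} than in the local one: by Frank--Seiringer \cite[Theorem A.1]{FS}, the inequality $[u^*]_{s,p}\le[u]_{s,p}$ is strict unless $u$ is a translate of $u^*$, with no plateau, regularity, or Euler--Lagrange hypotheses whatsoever. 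With that input your scheme closes immediately for $\alpha=0$ (and also re-proves the $\alpha>0$ case): if the nonnegative minimizer were not a translate of $u^*$, then $u^*$ would have strictly smaller seminorm and, by equimeasurability, the same constraint value, contradicting minimality. So your option (i) is the correct route, and the reference you need is \cite{FS}; option (ii) should be discarded.
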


\begin{proof}
An easy computation based on \eqref{Ilambda} ensures that $u$ realizes the Rayleigh quotient
\[
{\cal R}:=\inf\left\{\frac{[v]_{s,p}^p}{\|v\|_{\alpha, q}^{p}}: v\in\dot{W}^{s,p}(\R^N)\setminus \{0\}\right\}.
\]
Let $u^*$ be the symmetric-decreasing rearrangement of $u$. Thanks to Theorem 3.4 of  \cite{LiebLoss} we have
\[
\|u\|_{\alpha, q}\leq \|u^*\|_{\alpha, q},
\]
because $x\mapsto 1/|x|^\alpha$ coincides with its symmetric-decreasing rearrangement. If $\alpha:=0$ then equality always holds, while when $u\neq u^*$ and  $\alpha>0$ the inequality turns out to be strict. Moreover, the P\'olya-Szeg\"o principle \cite[Theorem 9.2]{AL} gives
 \[
 [u^*]_{s,p}\leq [u]_{s,p},
 \]
with strict inequality once $u$ is not a translation of $u^*$. The last assertion is peculiar to the nonlocal nature of $[u]_{s,p}$; see \cite[Theorem A1]{FS}.
\end{proof}
Let us also observe that inequalities \eqref{HS} stem from the borderline Hardy inequality, namely \eqref{HS} written for
$\alpha:=ps$ and $q:=p$. The corresponding best constant, say $C_H$, has been explicitly computed in \cite[Theorem 1.1]{FS}. The next result basically is folklore.
\begin{lemma}
Let $C_H:=C(N, p, s, ps)$ in \eqref{HS}. Then 
\[
C(N, p, s, \alpha)\leq C_H\left(\frac{N\omega_N}{N-ps}\right)^{\frac{1}{p}\frac{\alpha-ps}{N-\alpha}}.
\]
\end{lemma}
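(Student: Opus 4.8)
The plan is to interpolate between the two extreme cases $\alpha=ps$ (pure Hardy) and $\alpha=0$ (pure Sobolev), using the variational characterization of the best constants and a suitable H\"older splitting of the weight $|x|^{-\alpha}$. More precisely, write the target exponent $q=q(\alpha)$ determined by \eqref{scalingrelation}, and observe that for $0\le\alpha<ps$ the exponent $q$ lies strictly between $p$ (attained at $\alpha=ps$) and $p^*$ (attained at $\alpha=0$). First I would fix a test function $u\in\dot W^{s,p}(\R^N)\setminus\{0\}$ and try to bound $\|u\|_{\alpha,q}$ from above by a product of the Hardy-type quantity $\||x|^{-s}u\|_p$ and a power of $\|u\|_{p^*}$, namely by H\"older's inequality with a pair of conjugate exponents chosen so that the weights and the powers of $|u|$ match up. The natural choice is to write
\[
\int_{\R^N}\frac{|u|^q}{|x|^\alpha}\,dx=\int_{\R^N}\left(\frac{|u|^p}{|x|^{ps}}\right)^{\theta}\,|u|^{(1-\theta)p^*}\,dx
\]
for the unique $\theta\in[0,1]$ making both the power of $|u|$ (i.e. $\theta p+(1-\theta)p^*=q$) and the power of $|x|$ (i.e. $\theta\,ps=\alpha$) correct; the scaling relation \eqref{scalingrelation} guarantees these two conditions are compatible, giving $\theta=\alpha/(ps)$. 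Since $q=\theta p+(1-\theta)p^*$ is itself the correct convex combination, the integrand is already a product of two factors with exponents $\theta$ and $1-\theta$ summing to $1$ after the trivial normalisation, so H\"older is not even needed in the usual two-exponent form; one simply invokes the elementary inequality $a^\theta b^{1-\theta}\le \theta a+(1-\theta)b$, or rather integrates the pointwise identity and applies H\"older with exponents $1/\theta$ and $1/(1-\theta)$.

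Carrying this out, H\"older's inequality yields
\[
\|u\|_{\alpha,q}^q\le\left(\int_{\R^N}\frac{|u|^p}{|x|^{ps}}\,dx\right)^{\theta}\left(\int_{\R^N}|u|^{p^*}\,dx\right)^{1-\theta},
\]
and then I would substitute the Hardy inequality $\||x|^{-s}u\|_p\le C_H\,[u]_{s,p}$ and the Sobolev inequality $\|u\|_{p^*}\le C(N,p,s,0)\,[u]_{s,p}$ into the two factors. This gives $\|u\|_{\alpha,q}\le C_H^{\theta p/q}\,C(N,p,s,0)^{(1-\theta)p^*/q}\,[u]_{s,p}$, whence, taking the infimum over $u$,
\[
C(N,p,s,\alpha)\le C_H^{\theta p/q}\,C(N,p,s,0)^{(1-\theta)p^*/q}.
\]
The remaining task is purely bookkeeping: one must check that the exponents on the right reduce to the claimed closed form. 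Using $\theta=\alpha/(ps)$, $\frac{N-\alpha}{q}=\frac{N-ps}{p}$, and \eqref{pstar}, a short computation should show $\theta p/q=1$ (so the $C_H$ factor appears to the first power) and that $(1-\theta)p^*/q$ together with the previously established bound $C(N,p,s,0)\le C_H\,(N\omega_N/(N-ps))^{\ldots}$ collapses to exactly the exponent $\frac1p\frac{\alpha-ps}{N-\alpha}$ stated in the lemma. In fact, cleaner still, one can bypass the Sobolev constant entirely and instead split $|x|^{-\alpha}=|x|^{-ps}\cdot|x|^{ps-\alpha}$ (so $|x|^{ps-\alpha}$ is an honest weight since $\alpha<ps$) and estimate the second factor on the ball where $|u|$ "lives" — but since $u$ need not be compactly supported, the robust route is the H\"older interpolation against $L^{p^*}$ above, combined with the already-proven bound on $C(N,p,s,0)$ in terms of $C_H$.

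The only genuine obstacle I anticipate is the arithmetic of the exponents: making sure the powers of $|x|$ and of $|u|$ in the H\"older split are exactly consistent with \eqref{scalingrelation} and \eqref{pstar}, and then verifying that the resulting product of constants telescopes to the stated single power of $(N\omega_N/(N-ps))$. This is elementary but must be done carefully, since a sign error in the exponent $\frac{\alpha-ps}{N-\alpha}$ (which is negative, as it should be: larger $\alpha$ ought to give a smaller constant relative to $C_H$ when $\alpha<ps$) is easy to make. No compactness, concentration, or regularity input is needed here; the lemma is, as the authors say, essentially folklore and follows from one application of H\"older plus the Hardy and Sobolev inequalities already recalled in this section.
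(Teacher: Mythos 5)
Your interpolation step is fine as far as it goes: with $\theta=\alpha/(ps)$ the pointwise splitting is consistent with \eqref{scalingrelation} and \eqref{pstar}, H\"older with exponents $1/\theta$, $1/(1-\theta)$ gives
\[
\|u\|_{\alpha,q}^q\leq \left(\int_{\R^N}\frac{|u|^p}{|x|^{ps}}\,dx\right)^{\theta}\left(\int_{\R^N}|u|^{p^*}dx\right)^{1-\theta},
\]
and hence $C(N,p,s,\alpha)\leq C_H^{a}\,C(N,p,s,0)^{1-a}$ with $a=\theta p/q=\frac{\alpha(N-ps)}{ps(N-\alpha)}$. (Note in passing that your claim $\theta p/q=1$ is false unless $\alpha=ps$; the correct bookkeeping is $a+\frac{(1-\theta)p^*}{q}=1$, and if one inserts $C(N,p,s,0)\leq C_H\bigl(\frac{N\omega_N}{N-ps}\bigr)^{-s/N}$ the exponents do collapse to $\frac1p\frac{\alpha-ps}{N-\alpha}$.) The genuine gap is that the bound on $C(N,p,s,0)$ in terms of $C_H$ that you invoke is \emph{not} ``previously established'' anywhere: it is precisely the case $\alpha=0$ of the lemma you are proving, and your H\"older splitting degenerates at $\alpha=0$ (there $\theta=0$ and the inequality becomes tautological). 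So the proposal is circular, and the whole content of the lemma --- where the geometric factor $N\omega_N/(N-ps)$ actually comes from --- is exactly the base case you have not proved.

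The paper avoids this by not interpolating at all: after reducing to radially non-increasing $u$ via Lemma \ref{radiality} (P\'olya--Szeg\"o plus the rearrangement inequality for $\|\cdot\|_{\alpha,q}$), it applies the Hardy--Littlewood--P\'olya inequality \cite{HLP} for non-increasing $h$ with the substitution $t=r^{N-ps}$, $h(t)=u(t^{1/(N-ps)})^p$, $\lambda=q/p$, which bounds $\|u\|_{\alpha,q}^{p}$ directly by $\int_{\R^N}|u|^p|x|^{-ps}dx$ times the stated power of $N\omega_N/(N-ps)$, and then uses the sharp Hardy inequality once. If you want to salvage your route, you must first prove the $\alpha=0$ case by such a rearrangement/HLP (Bliss-type) argument; at that point your interpolation becomes a legitimate, and indeed rearrangement-free, way to pass from $\alpha=0$ to general $\alpha\in\,]0,ps[$, but it cannot replace that core step.
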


\begin{proof}
Pick any $u\in C^\infty_c(\R^N)$. On account of Lemma \ref{radiality}, we may assume $u=u(r)$ both nonnegative and radially non-increasing. It is known \cite{HLP} that if $\lambda\geq 1$ and  $h:[0, +\infty)\to \R^+_0$ is non-increasing then
\[
\int_0^{+\infty} h(t)^\lambda\, t^{\lambda-1} dt= \int_0^{+\infty} (h(t)t)^{\lambda-1}h(t) dt\leq \int_0^{+\infty}\left(\int_0^t h(s) ds\right)^{\lambda-1} h(t) dt\leq \left(\int_0^{+\infty} h(t) dt\right)^\lambda.
\]
Choosing $t:=r^{N-ps}$, $h(t):=u(t^{1/(N-ps)})^p$,  $\lambda:=q/p$ yields
\[
\int_{\R^N} \frac{u^p}{|x|^{ps}}\, dx=\frac{N\omega_N}{N-ps}\int_0^{+\infty} h(t)\, dt\geq \frac{N\omega_N}{N-ps}\left(\int_0^{+\infty} u(t^{\frac{1}{N-ps}})^q\, t^{\frac{q}{p}-1} dt\right)^{\frac{p}{q}}.
\]
Since $t=r^{N-ps}$, from \eqref{scalingrelation} it follows
\[
\int_{\R^N} \frac{u^p}{|x|^{ps}}\, dx\geq \frac{N\omega_N}{N-ps}\left((N-ps)\int_0^{+\infty} \frac{u^q}{r^\alpha} r^{N-1}\, dr\right)^{\frac{p}{q}}= \left(\frac{N\omega_N}{N-ps}\right)^{-\frac{\alpha-ps}{N-\alpha}}\left(\int_{\R^N} \frac{u^q}{|x|^\alpha}\, dx\right)^{\frac{p}{q}}.
\]
Now, rearranging and inserting inside \eqref{HS} with $q:=p$ and $\alpha:=ps$ leads to the conclusion through P\'olya-Szeg\"o principle.
\end{proof}
Finally, we collect here the following two technical lemmas.
\begin{lemma}
\label{23}
Let $\eta\in C^\infty(\R^N)$ and let $\gamma>0$. Then
\begin{equation}\label{deta1}
\||D^s \eta|^\gamma\|_\infty\leq C(N, \gamma)\, \left(\frac{1}{1-s}+\frac{1}{s}\right)\,{\rm Lip}(\eta)^{\gamma s}\, \|\eta\|_\infty^{\gamma(1-s)}.
\end{equation}
If, moreover, ${\rm supp}(\eta)\subseteq B_R$ then for every $\theta>0$ there exists $C_\theta:=C(\theta, N, \gamma, s)$ such that
\begin{equation}\label{deta2}
|D^s\eta|^\gamma(x)\leq \frac{C_\theta\, R^N\, |\eta|_\infty^\gamma}{|x|^{N+\gamma s}},\qquad x\in B_{(1+\theta) R}^c.
\end{equation}
\end{lemma}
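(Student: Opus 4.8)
The plan is to prove the two pointwise bounds in Lemma~\ref{23} directly from the definition of $|D^s\eta|^\gamma$, treating the integral over $\{|h|\le 1\}$ and $\{|h|> 1\}$ (or, more precisely, over a radius to be optimized) separately. For \eqref{deta1} fix $x\in\R^N$ and split
\[
|D^s\eta|^\gamma(x)=\left(\int_{|h|\le\rho}+\int_{|h|>\rho}\right)\frac{|\eta(x)-\eta(x+h)|^\gamma}{|h|^{N+\gamma s}}\,dh
\]
for a parameter $\rho>0$ to be chosen. On the inner region use the Lipschitz bound $|\eta(x)-\eta(x+h)|\le \mathrm{Lip}(\eta)\,|h|$, so the integrand is $\le \mathrm{Lip}(\eta)^\gamma |h|^{\gamma-N-\gamma s}=\mathrm{Lip}(\eta)^\gamma|h|^{\gamma(1-s)-N}$; integrating in polar coordinates gives $C(N,\gamma)\,\mathrm{Lip}(\eta)^\gamma\,\rho^{\gamma(1-s)}/(\gamma(1-s))$, where the factor $1/(1-s)$ is exactly the blow-up in the radial integral $\int_0^\rho r^{\gamma(1-s)-1}\,dr$. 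On the outer region use $|\eta(x)-\eta(x+h)|\le 2\|\eta\|_\infty$, so the integrand is $\le (2\|\eta\|_\infty)^\gamma |h|^{-N-\gamma s}$, whose integral over $\{|h|>\rho\}$ is $C(N,\gamma)\,\|\eta\|_\infty^\gamma\,\rho^{-\gamma s}/(\gamma s)$, the source of the $1/s$ factor. Adding and optimizing in $\rho$ — the natural choice being $\rho=\|\eta\|_\infty/\mathrm{Lip}(\eta)$, which balances the two terms — yields exactly the claimed $\mathrm{Lip}(\eta)^{\gamma s}\|\eta\|_\infty^{\gamma(1-s)}$ scaling, up to the stated constant. (One should note the degenerate cases $\mathrm{Lip}(\eta)=0$ or $\|\eta\|_\infty=\infty$ are trivial or vacuous.)

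For \eqref{deta2} assume $\mathrm{supp}(\eta)\subseteq B_R$ and take $x$ with $|x|\ge(1+\theta)R$, so in particular $\eta(x)=0$. Then
\[
|D^s\eta|^\gamma(x)=\int_{\R^N}\frac{|\eta(x+h)|^\gamma}{|h|^{N+\gamma s}}\,dh=\int_{x+h\in B_R}\frac{|\eta(x+h)|^\gamma}{|h|^{N+\gamma s}}\,dh,
\]
and for $x+h\in B_R$ one has $|h|=|x-(x+h)|\ge |x|-R$. Since $|x|\ge(1+\theta)R$ gives $R\le |x|/(1+\theta)$, hence $|x|-R\ge \frac{\theta}{1+\theta}|x|$, we get $|h|^{-N-\gamma s}\le \big(\tfrac{1+\theta}{\theta}\big)^{N+\gamma s}|x|^{-N-\gamma s}$ on the region of integration. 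Pulling this out and bounding $|\eta(x+h)|^\gamma\le\|\eta\|_\infty^\gamma$ over $B_R$ (whose measure is $\omega_N R^N$) produces
\[
|D^s\eta|^\gamma(x)\le \left(\frac{1+\theta}{\theta}\right)^{N+\gamma s}\omega_N\,\frac{R^N\,\|\eta\|_\infty^\gamma}{|x|^{N+\gamma s}},
\]
which is \eqref{deta2} with $C_\theta:=\omega_N\big(\tfrac{1+\theta}{\theta}\big)^{N+\gamma s}$, a constant depending only on $\theta,N,\gamma,s$ as required.

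Neither part presents a genuine obstacle; the only points needing mild care are the bookkeeping of the constants — in \eqref{deta1} making sure the $\rho$-optimization really delivers the additive form $1/(1-s)+1/s$ rather than a product, which it does because after substituting the balancing $\rho$ the two contributions become $C\,\mathrm{Lip}(\eta)^{\gamma s}\|\eta\|_\infty^{\gamma(1-s)}$ times $1/(\gamma(1-s))$ and $1/(\gamma s)$ respectively — and in \eqref{deta2} keeping the dependence of $C_\theta$ on $s$ (through the exponent $N+\gamma s$, harmlessly bounded for $s\in[0,1]$) explicit. A cleaner alternative for \eqref{deta1}, avoiding optimization, is simply to take $\rho=1$ and absorb the resulting $\max\{\mathrm{Lip}(\eta)^\gamma,\|\eta\|_\infty^\gamma\}$-type bound — but since the lemma asks for the sharp homogeneous scaling $\mathrm{Lip}(\eta)^{\gamma s}\|\eta\|_\infty^{\gamma(1-s)}$, the balanced choice of $\rho$ is the one to use.
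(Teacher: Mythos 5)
Your proof is correct and follows essentially the same route as the paper: for \eqref{deta1} the paper likewise splits the integral at a radius $\lambda$, uses the Lipschitz bound inside and $2\|\eta\|_\infty$ outside, and chooses $\lambda$ to balance the two terms (yielding exactly the additive $\tfrac{1}{1-s}+\tfrac{1}{s}$ factor), while for \eqref{deta2} it uses the same geometric inequality $|x|\le\tfrac{1+\theta}{\theta}|x-y|$ on the support and the bound $\|\eta\|_\gamma^\gamma\le\omega_N R^N\|\eta\|_\infty^\gamma$. No issues to flag.
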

\begin{proof}
We may assume finite the right-hand side of \eqref{deta1}. Pick $\lambda>0$ and observe that
\[
\begin{split}
|D^s\eta|^\gamma(x)&\leq \left(\int_{B_{\lambda}(x)}\frac{{\rm Lip}(\eta)^\gamma\, |x-y|^\gamma}{|x-y|^{N+\gamma s}}\, dy+\int_{B_{\lambda}^c(x)}\frac{2^\gamma\, \|\eta\|_\infty^\gamma}{|x-y|^{N+\gamma s}}\, dy\right)\\
& \leq C\left( \frac{\lambda^{\gamma(1-s)}}{\gamma(1-s)}\, {\rm Lip}(\eta)^\gamma+\frac{2^\gamma\, \|\eta\|_\infty^\gamma}{\lambda^{\gamma s}\gamma s}\right),
\end{split}
\]
where $C=C(N)$. Optimizing in $\lambda>0$ this inequality directly yields \eqref{deta1}.\\
Let us next come to \eqref{deta2}. If $|x|\geq(1+\theta)R$ and  $|y|\leq R$ then
\[
|x|\leq |x-y|+|y|\leq |x-y|+R\leq |x-y|+\frac{|x|}{1+\theta}\quad \Rightarrow\quad |x|\leq \frac{1+\theta}{\theta}|x-y|.
\]
Since ${\rm supp}(\eta)\subseteq B_R$, one has
\[
|D^s\eta|^\gamma(x)=\int_{B_R}\frac{|\eta(y)|^\gamma}{|x-y|^{N+\gamma s}}\, dy\leq \left(\frac{1+\theta}{\theta}\right)^{N+\gamma s}\frac{\|\eta\|_\gamma^\gamma}{|x|^{N+\gamma s}},
\]
which easily entails \eqref{deta2}.
\end{proof}
Inspecting the previous proof one can also show that for $\gamma\leq 1$ it holds $C(N, \gamma)=C(N)/\gamma$.

\begin{lemma}\cite[Lemma A.2]{BP}\label{lemmag}
Suppose $g:\R\to \R$ is absolutely continuous continuous and non-decreasing, $p>1$, and
\begin{equation}
\label{defG}
G(t):=\int_0^tg'(\tau)^\frac{1}{p}\, d\tau,\quad t\in\R.
\end{equation}
 Then $[G(u)]_{s,p}^p\leq \langle (-\Delta_p)^s u, g(u)\rangle$ for every $u\in \dot W^{s,p}(\R^N)$.
\end{lemma}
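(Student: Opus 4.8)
The plan is to reduce the statement to the elementary pointwise inequality
\begin{equation}\label{planptwise}
|G(a)-G(b)|^p\leq |a-b|^{p-2}(a-b)\big(g(a)-g(b)\big),\qquad a,b\in\R,
\end{equation}
and then to integrate it against the kernel $|x-y|^{-(N+ps)}$ with $a=u(x)$, $b=u(y)$, recalling that for $u,\varphi\in\dot W^{s,p}(\R^N)$ the differential of $v\mapsto\frac1p[v]_{s,p}^p$ satisfies
\[
\langle(-\Delta_p)^su,\varphi\rangle=\int_{\R^N\times\R^N}\frac{|u(x)-u(y)|^{p-2}(u(x)-u(y))\big(\varphi(x)-\varphi(y)\big)}{|x-y|^{N+ps}}\,dx\,dy.
\]
Since $g$ is non-decreasing, the right-hand side of \eqref{planptwise} is non-negative, so after the substitution $\varphi=g(u)$ the double integral above is well defined in $[0,+\infty]$ regardless of whether $g(u)$ lies in $\dot W^{s,p}(\R^N)$; this is how the pairing on the right of the claimed inequality is to be read.

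\textbf{Proof of the pointwise inequality.} First I would note that $G$ in \eqref{defG} is well defined and finite: $g$ absolutely continuous and non-decreasing gives $g'\geq0$ a.e.\ and $g'\in L^1_{\rm loc}$, hence by H\"older on $[0,t]$ the integrand $g'^{1/p}$ is locally integrable, and $G$ is itself non-decreasing. To prove \eqref{planptwise} it suffices, by symmetry, to take $a>b$ (equality is trivial when $a=b$). Then $G(a)-G(b)=\int_b^a g'(\tau)^{1/p}\,d\tau\geq0$, and H\"older's inequality with exponents $p$ and $p'$ yields
\[
G(a)-G(b)\leq\Big(\int_b^a g'(\tau)\,d\tau\Big)^{1/p}(a-b)^{1/p'}=\big(g(a)-g(b)\big)^{1/p}(a-b)^{(p-1)/p}.
\]
Raising to the $p$-th power gives $(G(a)-G(b))^p\leq(a-b)^{p-1}(g(a)-g(b))$, which is \eqref{planptwise} for $a>b$.

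\textbf{Passage to the nonlocal functional.} Substituting $a=u(x)$, $b=u(y)$ in \eqref{planptwise}, dividing by $|x-y|^{N+ps}$ and integrating over $\R^N\times\R^N$, one gets
\[
[G(u)]_{s,p}^p=\int_{\R^N\times\R^N}\frac{|G(u(x))-G(u(y))|^p}{|x-y|^{N+ps}}\,dx\,dy\leq\int_{\R^N\times\R^N}\frac{|u(x)-u(y)|^{p-2}(u(x)-u(y))\big(g(u(x))-g(u(y))\big)}{|x-y|^{N+ps}}\,dx\,dy,
\]
and the right-hand side is exactly $\langle(-\Delta_p)^su,g(u)\rangle$. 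To make this rigorous it is convenient to first run the argument with the truncations $g_k:=\max(-k,\min(k,g))$, which are bounded, absolutely continuous and non-decreasing, so that $g_k(u)$ is bounded and — being supported where $|u|>0$, a set of finite measure, up to an additive constant one can arrange $g_k(0)=0$ — a genuine element of $\dot W^{s,p}(\R^N)$ and a suitable test function; then one lets $k\to+\infty$, using monotone convergence for the right-hand side (the integrand increases to the limit since $g_k\uparrow g$ pointwise off a null set and $u(x)-u(y)$ and $g(u(x))-g(u(y))$ have the same sign) and Fatou's lemma on the left (together with the elementary fact that $G(u)$ vanishes at infinity because $G$ is monotone with $G(0)=0$ and $u$ does).

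\textbf{Main obstacle.} The analytic core is trivial — it is just \eqref{planptwise}, proved in one line of H\"older. The only genuine care needed is the bookkeeping in the last paragraph: interpreting $\langle(-\Delta_p)^su,g(u)\rangle$ when $g(u)\notin\dot W^{s,p}(\R^N)$ and justifying the limit in the truncation scheme, i.e.\ checking that $G(u)$ vanishes at infinity and that the monotone/Fatou passage is legitimate. None of this is deep, but it is where a fully rigorous write-up must be slightly careful.
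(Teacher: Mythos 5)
Your proposal is correct and is essentially the argument of the cited reference \cite[Lemma A.2]{BP}, which the paper invokes instead of giving a proof: the pointwise Jensen/H\"older inequality $|G(a)-G(b)|^p\le |a-b|^{p-2}(a-b)\,(g(a)-g(b))$ applied with $a=u(x)$, $b=u(y)$ and integrated against the kernel $|x-y|^{-(N+ps)}$, the right-hand pairing being read as the (nonnegative, possibly infinite) double integral. The concluding truncation scheme is unnecessary for this reading (and note $g_k(u)$ need not lie in $\dot W^{s,p}(\R^N)$, since $g_k$ is bounded but not Lipschitz), but this does not affect the validity of the core argument.
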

\section{Concentration Compactness}
In this section we prepare details of the concentration-compactness scheme for problem \eqref{I}. Some arguments will closely follow \cite[Theorem 2.4]{Lions2}, but serious modifications, which we will explicitly outline below, are needed in order to deal with nonlocal interactions.
\begin{theorem}\label{exist}
Let $N>ps$, $q>p$, and $\alpha\in[0, ps[$ satisfy \eqref{scalingrelation}. Then \eqref{I} possesses a nonnegative radially decreasing minimizer.
\end{theorem}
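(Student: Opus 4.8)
The plan is to run Lions' concentration-compactness dichotomy on a minimizing sequence, using the energy-density $|D^su|^p$ of \eqref{defDsup} as the concentration function, together with Lemma~\ref{radiality} to reduce everything to radially non-increasing competitors. Concretely, by \eqref{Ilambda} and Lemma~\ref{radiality} we may pick a minimizing sequence $(u_n)\subset\dot W^{s,p}(\R^N)$ for $\mathcal R$ consisting of nonnegative, radially non-increasing functions normalized by $\|u_n\|_{\alpha,q}=1$, so that $[u_n]_{s,p}^p\to\mathcal R=I_1$. Since $[u_n]_{s,p}$ is bounded, the measures $\mu_n:=|D^su_n|^p\,dx$ have bounded mass $[u_n]_{s,p}^p$; after passing to a subsequence, $\mu_n(\R^N)\to L\ge I_1$, and by the concentration-compactness lemma one of the three alternatives --- compactness, vanishing, dichotomy --- occurs for $(\mu_n)$ (up to translations). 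First I would rule out vanishing: if $\sup_{y}\mu_n(B_R(y))\to0$ for every $R$, then by the refined (nonlocal) Sobolev-type estimate one forces $\|u_n\|_{p^*}\to0$, hence $\|u_n\|_{\alpha,q}\to0$ by the Hardy-Sobolev inequality \eqref{HS} localized appropriately, contradicting the normalization; the point $\alpha<ps$ guarantees $q<p^*$ is not needed, but one does use $q>p$ here indirectly through interpolation between $L^p$-type Hardy control and $L^{p^*}$.

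Next I would rule out dichotomy, which is the heart of the matter. Suppose the mass $L$ splits as $L=\lambda+(L-\lambda)$ with $0<\lambda<L$; one chooses radial cutoffs $\varphi_R$ (supported in $B_{2R}$, equal to $1$ on $B_R$) and $\psi_R=1-\varphi_R$, and writes $u_n=\varphi_R u_n+\psi_R u_n=:v_n+w_n$ with $v_n$ concentrated near the origin and $w_n$ near infinity, $\|v_n\|_{\alpha,q}^q\to\lambda'$, $\|w_n\|_{\alpha,q}^q\to1-\lambda'$ for some $\lambda'\in(0,1)$. The crucial nonlocal obstruction is that $[u_n]_{s,p}^p$ does \emph{not} split additively: there is a cross term coming from pairs $(x,y)$ with $x\in B_R$, $y\in B_{2R}^c$ (and vice versa), which has no local analogue. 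The plan is to estimate this cross term and show it is negligible: using \eqref{Liebn} to control $[\varphi_R u_n]_{s,p}$ and $[\psi_R u_n]_{s,p}$ in terms of $[u_n]_{s,p}$ over the annulus $B_{2R}\setminus B_R$ plus error terms involving $\||D^s\varphi_R|^p\|$, and invoking the decay estimate \eqref{deta2} of Lemma~\ref{23} (with $R$ replaced by $2R$, say) to bound the tail contribution $\int_{B_R}\int_{B_{2R}^c}\frac{|u_n(x)-u_n(y)|^p}{|x-y|^{N+ps}}$ by something like $C R^{-ps}\|u_n\|_{p^*}^p$ times a small factor; this is exactly where Lemma~\ref{lemmaint} (referenced in the introduction) enters. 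Choosing $R=R_n\to\infty$ slowly along the subsequence where dichotomy occurs, one obtains
\[
I_1=L\ge \liminf_n\bigl([v_n]_{s,p}^p+[w_n]_{s,p}^p\bigr)-o(1)\ge I_{\lambda'}+I_{1-\lambda'}=(\lambda'^{p/q}+(1-\lambda')^{p/q})\,I_1>I_1,
\]
the last strict inequality being the strict subadditivity $I_{\lambda+\mu}<I_\lambda+I_\mu$ valid precisely because $q>p$ (so $t\mapsto t^{p/q}$ is strictly concave). This contradiction excludes dichotomy.

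Hence compactness holds: there are $y_n\in\R^N$ with $\mu_n(\cdot+y_n)$ tight, but since the $u_n$ are radially non-increasing around the origin the $y_n$ can be taken bounded (indeed radial monotonicity plus tightness pins the concentration at $0$), so $\mu_n$ itself is tight. Then $u_n\rightharpoonup u$ in $\dot W^{s,p}(\R^N)$ along a subsequence; tightness of $\mu_n$ together with the compact embedding $W^{s,p}_0(B_R)\hookrightarrow\hookrightarrow L^q_{\rm loc}$ (here $q<p^*$, using $\alpha<ps$ in \eqref{scalingrelation}) and control of the tails near infinity (again via the decay of radial $\dot W^{s,p}$ functions and the weight $|x|^{-\alpha}$) upgrades weak to strong convergence in $L^q(\R^N;|x|^{-\alpha}dx)$, so $\|u\|_{\alpha,q}=1$. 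By weak lower semicontinuity $[u]_{s,p}^p\le\liminf[u_n]_{s,p}^p=I_1$, whence $u$ is a minimizer, and by Lemma~\ref{radiality} (or since $u$ is already a radial-decreasing limit) $u$ is nonnegative and radially non-increasing. Finally $u\not\equiv0$ because $\|u\|_{\alpha,q}=1$. I expect the main obstacle to be the quantitative control of the nonlocal cross term in the dichotomy step --- the precise bookkeeping showing that the annular and tail interactions vanish in the limit, which is the content of Lemma~\ref{lemmaint} and has no counterpart in the classical local proof; the vanishing and compactness steps, while requiring care near infinity because of the unbounded domain and the Hardy weight, are comparatively standard once the right embeddings are in place.
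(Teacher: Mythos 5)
Your outline reproduces the paper's treatment of vanishing and dichotomy in spirit (the cross-term bookkeeping is exactly Lemma \ref{lemmaint}, and the strict subadditivity from $q>p$ is used identically), but the final ``compactness'' step has a genuine gap: you never rule out loss of compactness by \emph{dilation}, i.e.\ concentration of the whole mass at a single point. Your justification rests on the claim that $W^{s,p}_0(B_R)$ embeds compactly into the relevant local space ``since $q<p^*$, using $\alpha<ps$''. This is wrong on two counts. First, $\alpha<ps$ only gives $q>p$; the inequality $q<p^*$ requires $\alpha>0$, and the theorem includes $\alpha=0$, where $q=p^*$ and the local embedding is \emph{not} compact. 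Second, even for $\alpha>0$ the weighted embedding $\dot W^{s,p}(\R^N)\hookrightarrow L^q(|x|^{-\alpha}dx)$ is critical at the origin: by the scale invariance \eqref{scalingrelation}, the rescalings $u_\eps(x)=\eps^{-\frac{N-ps}{p}}u(x/\eps)$ have fixed $[\,\cdot\,]_{s,p}$ and fixed $\|\cdot\|_{\alpha,q}$, are supported in shrinking balls, and converge weakly to $0$; such a sequence is tight (in fact radial and decreasing, so your ``radial monotonicity pins the concentration at $0$'' remark does not help), exhibits neither vanishing nor dichotomy, yet has no strongly convergent subsequence in $L^q(|x|^{-\alpha}dx)$. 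So tightness plus weak convergence does not yield $\|u\|_{\alpha,q}=1$, and your argument as written would ``prove'' existence even in the scale-invariant borderline cases where it fails.

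The paper closes exactly this hole by two devices you omit: (i) the concentration function is built from $\rho(u_n)=|D^su_n|^p+|u_n|^{p^*}$ and the sequence is \emph{rescaled} so that $Q_n(1)=I_1/2$, which fixes the dilation scale (and incidentally disposes of vanishing at once, making your somewhat vague refined-Sobolev argument unnecessary); and (ii) after tightness is established, the possible formation of Dirac atoms in the limit measures is excluded via the \emph{second} (limit-case) concentration-compactness lemma, carried out as in \cite[Theorem 2.5]{MS} with $\|u\|_{p^*}$ replaced by $\|u\|_{\alpha,q}$. Without step (ii) (or some substitute, e.g.\ an argument showing an atom of the limiting $|x|^{-\alpha}|u_n|^q$-measure would again contradict strict subadditivity together with the normalization $Q_n(1)=I_1/2$), the proof is incomplete. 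Your use of symmetrization to make the sequence radial is a legitimate variant the paper deliberately avoids, and it does simplify the translation issue, but it does nothing against concentration, which is the remaining — and essential — obstruction.
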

\begin{proof}
It suffices to verify the assertion for $\lambda=1$. Define, provided $u\in \dot{W}^{s,p}(\R^N)$,
\[
\rho(u):=|D^s u|^p + |u|^{p^*}\in L^1(\R^N).
\]
If $\{u_n\}$ is a minimizing sequence of \eqref{I} (where $\lambda=1$) then, up to subsequences, 
\[
\lim_{n\to+\infty}\int_{\R^N}\rho(u_n)\, dx= L\geq I_1>0.
\]
We can choose a rescaling as well as a subsequence, still denoted by $\{u_n\}$, such that, setting
\[
 Q_n(t):=\sup_{y\in \R^N} \int_{B_t(y)}\rho(u_n)\, dx,\quad n\in\N,\; t\geq 0,
 \]
 one has
\[
Q_n(1)=I_1/2\quad\forall\, n\in\N, \quad \lim_{n\to+\infty}Q_n(t)=Q(t)\quad\forall\, t\geq 0.
\]
Therefore, vanishing cannot evidently occur in \cite[Lemma I.1]{Lions}. We will show that the same holds true for dichotomy; this is the point where nonlocal effects force a modification of the standard proof. Suppose on the contrary
\[
\lim_{t\to +\infty} Q(t)=a\in\  ]0, L[.
\]
Then for every $\eps>0$ there exist $\{y_n\}\subseteq\R^N$, $R_n\geq R_0>0$, $R_n\uparrow +\infty$ such that
\begin{equation}\label{dich}
\left|\int_{B_{R_0}(y_n)}\rho(u_n)\, dx-a\right|+\left|\int_{B_{R_n}^c(y_n)}\rho(u_n)\, dx-(L-a)\right|+\left|\int_{B_{R_n}(y_n)\setminus B_{R_0}(y_n)}\rho(u_n)\, dx\right|<\eps.
\end{equation}

The following elementary but useful inequalities quantify how the local behaviour of $|D^s u|$ controls the magnitude of $u$ nearby. Clearly, specific numbers are to some extent arbitrary choices.
 
\begin{lemma}
Let $u\in \dot{W}^{s,p}(\R^N)$ and let $R>0$. Then
\begin{equation}\label{nl1}
\frac{1}{R^{ps}}\int_{B_R} |u|^p\, dx\leq C\int_{B_{3R}\setminus B_{2R}}|D^su|^p+\frac{1}{R^{ps}}|u|^p\, dx,
\end{equation}
\begin{equation}\label{nl2}
R^{N}\int_{B_{4R}^c} \frac{|u|^p}{|x|^{N+ps}}\, dx\leq C\int_{B_{3R}\setminus B_{2R}}|D^su|^p+\frac{1}{R^{ps}}|u|^p\, dx,
\end{equation}
with appropriate constant $C=C(N,p,s)$.
\end{lemma}
\begin{proof}
By density, we may assume $u\in C^\infty_c(\R^N)$ while via scaling one can put $R=1$. Let us first observe that $z\in B_3\setminus B_2$, $x\in B_1$ imply $1\leq |x-z|\leq 4$, whence
\[
\int_{B_1} |u|^p\, dx\leq 2^{p-1}\int_{B_1}|u(x)-u(z)|^p+ |u(z)|^p dx\leq C\left(\int_{B_1}\frac{|u(x)-u(z)|^p}{|x-z|^{N+ps}}\, dx+|u(z)|^p\right).
\]
After integration in $z\in B_3\setminus B_2$, this entails
\[
\begin{split}
\int_{B_1} |u|^p\, dx&\leq C\left(\int_{B_1\times (B_3\setminus B_2)}\frac{|u(x)-u(z)|^p}{|x-z|^{N+ps}}\, dx\, dz+\int_{B_3\setminus B_2}|u(z)|^p\, dz\right)\\
&\leq C\int_{B_3\setminus B_2}|D^su|^p+|u|^p\, dx,
\end{split}
\]
as desired. Similarly, since $|x-z|\leq |x|+|z|\leq 2|x|$ for all $x\in B_4^c$, $z\in B_3\setminus B_2$, one has 
\[
\int_{B_4^c} \frac{|u|^p}{|x|^{N+ps}}\, dx\leq C\left(\int_{B_4^c}\frac{|u(x)-u(z)|^p}{|x-z|^{N+ps}}\, dx+|u(z)|^p\right),
\]
which integrated in $z\in B_3\setminus B_2$ provides 
\[
\int_{B_{4}^c} \frac{|u|^p}{|x|^{N+ps}}\, dx\leq C\int_{B_{3}\setminus B_{2}}|D^su|^p+|u|^p\, dx,
\]
and \eqref{nl2} follows.
\end{proof}
\begin{lemma}\label{lemmaint}
Suppose $\eta\in C^\infty(\R^N)$ fulfills $0\leq \eta\leq 1$, $\eta\lfloor_{B_4}=1$, $\eta\lfloor_{ B_5^c}=0$, and define
$\eta_R(x):=\eta(x/R)$, $R>0$. Then there exists $C_\eta:=C(N, p, s, {\rm Lip}(\eta))$ such that for every $u\in \dot{W}^{s,p}(\R^N)$ one has
\begin{equation}\label{st1}
\left|\int_{B_R}|D^su|^p\, dx-\int_{\R^N}|D^s(\eta_R\,  u)|^p\, dx\right|\leq C_\eta\int_{B_{20R}\setminus B_{R}}|D^s u|^p\, dx+\frac{C_{\eta}}{R^{ps}}\int_{B_{20R}\setminus B_{R}}|u|^p\, dx.
\end{equation}
Suppose $\xi\in C^\infty(\R^N)$ fulfills $0\leq \xi\leq 1$, $\xi\lfloor_{B_{1/5}}=0$, $\xi\lfloor_{B_{1/4}^c}=1$, and define
$\xi_R(x):=\xi(x/R)$, $R>0$. Then there exists $C_\xi:=C(N, p, s, {\rm Lip}(\xi))$ such that for every $u\in \dot{W}^{s,p}(\R^N)$ one has
\begin{equation}\label{st2}
\left|\int_{B_{R}^c}|D^su|^p\, dx-\int_{\R^N}|D^s(\xi_R\,  u)|^p\, dx\right|\leq C_\xi\int_{B_{R}\setminus B_{R/20}}|D^s u|^p\, dx+\frac{C_\xi}{R^{ps}}\int_{B_{R}\setminus B_{R/20}}|u|^p\, dx.
\end{equation}
\end{lemma}
\begin{proof}
We start by proving \eqref{st1}. Scale to $R=1$ and observe that
\begin{equation}\label{diff}
\begin{split}
\int_{B_1}|D^s u|^p\, dx-&\int_{\R^N}|D^s(\eta\,  u)|^p\, dx=-\int_{B_1^c\times \R^N}\frac{|\eta(x)\, u(x)-\eta(y)\, u(y)|^p}{|x-y|^{N+ps}}\, dx\, dy\\
&\quad +\int_{B_1\times B_{4}^c}\frac{|u(x)-u(y)|^p-|u(x)\, \eta(x)-\eta(y)\,  u(y)|^p}{|x-y|^{N+ps}}\, dx\, dy.
\end{split}
\end{equation}
Now,
\[
\begin{split}
\int_{B_1\times B_{4}^c}&\frac{|u(x)-u(y)|^p}{|x-y|^{N+ps}}\, dx\, dy\leq 
2^{p-1}\int_{B_1\times B_{4}^c}\frac{|u(x)|^p+|u(y)|^p}{|x-y|^{N+ps}}\, dx\, dy\\
&\leq C\left(\int_{B_1}|u|^p\, dx+\int_{B_4^c}\frac{|u|^p}{|y|^{N+ps}}\, dy\right)
\leq C \int_{B_3\setminus B_2}|D^su|^p+|u|^p\, dx.
\end{split}
\]
A similar inequality is true for the term involving $\eta$. Let us next estimate the other integral that appears in \eqref{diff}. Evidently,
\begin{equation}\label{lieb}
|\eta(x)\, u(x)-\eta(y)\, u(y)|^p\leq 2^{p-1}\left(|\eta(x)|^p\, |u(x)-u(y)|^p+|\eta(x)-\eta(y)|^p\, |u(y)|^p\right)
\end{equation}
and 
\[
\int_{B_1^c\times \R^N}\frac{|\eta(x)|^p\, |u(x)- u(y)|^p}{|x-y|^{N+ps}}\, dx\, dy\leq C\int_{B_5\setminus B_1}|D^s u|^p\, dx.
\]
Exploiting \eqref{deta1} on $B_6$ and \eqref{deta2} on $B_6^c$ (recall that ${\rm supp}(\eta)\subseteq B_5$), we get
\begin{equation*}
\begin{split}
\int_{B_1^c\times \R^N}&\frac{|\eta(x)-\eta(y)|^p\,  |u(y)|^p}{|x-y|^{N+ps}}\, dx\, dy\leq \int_{\R^N}|D^s\eta|^p\, |u|^p\, dy\\
&\leq \int_{B_6}|D^s\eta|^p\, |u|^p\, dy+\int_{B_6^c}|D^s\eta|^p\, |u|^p\, dy\\
&\leq C\, (1+{\rm Lip}(\eta)^{ps})\left(\int_{B_6}|u|^p\, dy+\int_{B_6^c}\frac{|u|^p}{|y|^{N+ps}}\, dy\right).
\end{split}
\end{equation*}
Thanks to \eqref{nl1}--\eqref{nl2}, this entails
\begin{equation*}
\begin{split}
\int_{B_1^c\times \R^N}&\frac{|\eta(x)-\eta(y)|^p\, |u(y)|^p}{|x-y|^{N+ps}}\, dx\, dy\\
&\leq C_\eta\int_{B_{18}\setminus B_{12}}|D^s u|^p+|u|^p \, dx+ C_\eta\int_{B_{9/2}\setminus B_{3}}|D^s u|^p+|u|^p \, dx.
\end{split}
\end{equation*}
Gathering together the estimates above we achieve \eqref{st1}.

The proof of \eqref{st2} is entirely analogous but, for the sake of completeness, we sketch it. Since 
\[
\begin{split}
&\int_{B_{1}^c}|D^su|^p\, dx-\int_{\R^N}|D^s(\xi\, u)|^p\, dx=\\
&\int_{B_1^c\times B_{1/4}}\frac{|u(x)-u(y)|^p-|\xi(x)\, u(x)-\xi(y)\, u(y)|^p}{|x-y|^{N+ps}}\, dx\, dy-\int_{B_1}|D^s(\xi\, u)|^p\, dx,
\end{split}
\]
let us estimate the two terms separately. Concerning the first one,
\[
\begin{split}
\left|\int_{B_1^c\times B_{1/4}}\frac{|u(x)-u(y)|^p-|u(x)-\xi(y)\, u(y)|^p}{|x-y|^{N+ps}}\, dx\, dy\right|
&\leq 2^p\int_{B_1^c\times B_{1/4}}\frac{|u(x)|^p+|u(y)|^p}{|x-y|^{N+ps}}\, dx\, dy\\
&\leq C\int_{B_1^c}\frac{|u|^p}{|x|^{N+ps}}\, dx +C\int_{B_{1/4}}|u(y)|^p\, dy,
\end{split}
\]
which, via a suitable rescaling of \eqref{nl1}--\eqref{nl2}, provides
\[
\left|\int_{B_1^c\times B_{1/4}}\frac{|u(x)-u(y)|^p-|u(x)-\xi(y)\, u(y)|^p}{|x-y|^{N+ps}}\, dx\, dy\right|\leq C\int_{B_{3/4}\setminus B_{1/2}}|D^s u|^p+|u|^p\, dx.
\]
For the second one, we proceed as in \eqref{lieb} and obtain 
\[
\begin{split}
\int_{B_1}|D^s(\xi \, u)|^p\, dx&\leq C\left(\int_{B_1}|\xi|^p\, |D^s u|^p\, dx+\int_{\R^N}|u|^p\, |D^s\xi|^p\, dx\right)\\
&\leq C\left(\int_{B_1\setminus B_{1/5}}|D^s u|^p\, dx+\int_{\R^N}|u|^p\, |D^s\xi|^p\, dx\right).
\end{split}
\]
Since $|D^s\xi|^p=|D^s(1-\xi)|^p$, due to \eqref{deta1}--\eqref{deta2} one has
\[
\begin{split}
\int_{\R^N}|u|^p\, |D^s\xi|^p\, dx&=\int_{B_{1/6}}|u|^p\, |D^s(1-\xi)|^p\, dx+\int_{B_{1/6}^c}|u|^p\, |D^s(1-\xi)|^p\, dx\\
&\leq C_\xi\left(\int_{B_{1/6}}|u|^p\, dx +\int_{B_{1/6}^c}\frac{|u|^p}{|x|^{N+ps}}\, dx\right).
\end{split}
\]
Through a suitable rescaling of \eqref{nl1}--\eqref{nl2}, it yields
\[
\int_{\R^N}|u|^p\, |D^s\xi|^p\, dy\leq 
C_\xi\int_{B_{1/2}\setminus B_{1/3}}|D^s u|^p+|u|^p\, dx+C_\xi\int_{B_{1/8}\setminus B_{1/12}}|D^s u|^p+|u|^p\, dx.
\]
Gathering together the above estimates we arrive at \eqref{st2}.
\end{proof}
Pick $\eta, \xi$ as in Lemma \ref{lemmaint} and define 
\[
u_n^1(x):=\eta\big(\frac{x-y_n}{R_0}\big)\, u_n(x),\quad u_n^2(x)=\xi\big(\frac{x-y_n}{R_n}\big)\, u_n(x).
\]
If $R_n>400R_0$ then, by \eqref{st1}--\eqref{st2}
\[
\begin{split}
\left|\int_{B_{R_0}(y_n)}\right. & \left.|D^s u_n|^p\, dx-\int_{\R^N}|D^s u_n^1|^p\, dx\right|+\left|\int_{B_{R_n}^c(y_n)}|D^s u_n|^p\, dx-\int_{\R^N}|D^s u_n^2|^p\, dx\right|\\
&\leq C_\eta\left(\int_{B_{20R_0}(y_n)\setminus B_{R_0}(y_n)}|D^s u_n|^p\,dx +\frac{1}{R_0^{ps}}\int_{B_{20R_0}(y_n)\setminus B_{R_0}(y_n)} |u_n|^p\, dx\right)\\
&\quad +C_\xi\left(\int_{B_{R_n}(y_n)\setminus B_{\frac{R_n}{20}}(y_n)}|D^s u_n|^p\,dx +\frac{1}{R_n^{ps}}\int_{B_{R_n}(y_n)\setminus B_{\frac{R_n}{20}}(y_n)} |u_n|^p\, dx\right)\\
&\leq C_\eta\int_{B_{20R_0}(y_n)\setminus B_{R_0}(y_n)}|D^s u_n|^p\,dx +C'_\eta\left(\int_{B_{20R_0}(y_n)\setminus B_{R_0}(y_n)} |u_n|^{p^*}\, dx\right)^{\frac{p}{p^*}}\\
&\quad +C_\xi\int_{B_{R_n}(y_n)\setminus B_{\frac{R_n}{20}}(y_n)}|D^s u_n|^p\,dx +C'_\xi\left(\int_{B_{R_n}(y_n)\setminus B_{\frac{R_n}{20}}(y_n)} |u_n|^{p^*}\, dx\right)^{\frac{p}{p^*}}\\
&\leq C_1\int_{B_{R_n}(y_n)\setminus B_{R_0}(y_n)} \rho(u_n)\, dx+C_2\left(\int_{B_{R_n}(y_n)\setminus B_{R_0}(y_n)} \rho(u_n)\, dx\right)^{\frac{p}{p^*}}\leq C_3(\eps+\eps^{\frac{p}{p^*}}).
\end{split}
\]
Therefore, due to \eqref{dich},
\begin{equation}\label{split0}
\left|[u_n]_{s,p}^p-[u_n^1]_{s,p}^p-[u_n^2]_{s,p}^p\right|\le C\, (\eps+\eps^{\frac{p}{p^*}}).
\end{equation}
Concerning $L^{p^*}$-norms, we readily have 
\[
\left|\int_{B_{R_0}(y_n)}|u_n|^{p^*}\, dx-\|u_n^1\|_{p^*}^{p^*}\right|+\left|\int_{B_{R_n}^c(y_n)}|u_n|^{p^*}\, dx-\|u_n^2\|_{p^*}^{p^*}\right|\le C\int_{B_{R_n}(y_n)\setminus B_{R_0}(y_n)}|u_n|^{p^*}\, dx,
\]
whence, in view of \eqref{dich} again, 
\begin{equation}
\label{split2}
\left|\int_{\R^N}\rho(u_n^1)\, dx-a\right|+\left|\int_{\R^N}\rho(u_n^2)\, dx-(L-a)\right|< C\, (\eps+\eps^{\frac{p}{p^*}}),\quad a\in \ ]0, L[.
\end{equation}
To conclude, suppose that
\[
\lim_{n\to+\infty}\int_{\R^N} \frac{|u_n^1|^q}{|x|^\alpha}\, dx=\lambda_1,\quad \lim_{n\to+\infty}\int_{\R^N}\frac{|u_n^2|^q}{|x|^\alpha}\, dx=\lambda_2 
\]
(where a subsequence is considered when necessary) with appropriate $\lambda_1,\lambda_2\in [0, 1]$ depending on $\eps$, i.e., $\lambda_1:=\lambda_1(\eps)$, $\lambda_2:=\lambda_2(\eps)$, and put 
\[
\eta_n(x):=\eta(\frac{x-y_n}{R_0}),\quad \xi_n(x):=\xi(\frac{x-y_n}{R_n}),\quad
\theta_n(x):=(1-\eta_n(x)^q-\xi_n(x)^q)^{1/q}. 
\]
Clearly, $0\le \theta_n\le \chi_{B_{R_n}(y_n)\setminus B_{R_0}(y_n)}$ because $R_n\geq 400 R_0$. By H\"older's  and Hardy's inequalities, besides \eqref{scalingrelation}, one has
\[
\begin{split}
\int_{\R^N}(|u_n|^q&-|u_n^1|^q-|u_n^2|^q)\frac{dx}{|x|^\alpha}=\int_{\R^N}|\theta_n\,  u_n|^q\, \frac{dx}{|x|^\alpha}\le \int_{\R^N}\frac{|u_n|^{\frac{\alpha}{s}}}{|x|^\alpha}|\theta_n\, u_n|^{q-\frac{\alpha}{s}}\, dx\\
&\le \| u_n\|_{ps, p}^{\frac{\alpha}{s}}\, \|\theta_n \,u_n \|_{p^*}^{p^*(1-\frac{\alpha}{ps})}\le C\left(\int_{B_{R_n}(y_n)\setminus B_{R_0}(y_n)}\rho(u_n)\, dx\right)^{1-\frac{\alpha}{ps}}\le C\, \eps^{1-\frac{\alpha}{ps}}.
\end{split}
\]
Thus,
\begin{equation}\label{alphabeta}
|\lambda_1+\lambda_2-1|= \lim_{n\to+\infty}\left|\int_{\R^N}(|u_n|^q-|u_n^1|^q-|u_n^2|^q)\frac{dx}{|x|^\alpha}\right|\leq C\, \eps^{1-\frac{\alpha}{s}}.
\end{equation}
The remaining proof of tightness now follows verbatim from \cite[Section I.2, Step 1]{Lions1}. Nevertheless, we briefly sketch it here. Via \eqref{split2} and Sobolev's inequality, one can find $b>0$, $\eps_0>0$ such that
\[
[u^1_n]_{s,p}^p\geq b,\qquad [u^2_n]_{s,p}^p \geq b
\]
for all $\eps<\eps_0$. Consequently, both numbers $\lambda_J$ are bounded away from $0$ or $1$ provided $\eps$ is sufficiently small. Indeed, using \eqref{split0}, the above inequalities, and \eqref{Ilambda} yields
\[
I_1=\lim_{n\to+\infty}[u_n]^p_{s,p}\geq \lim_{n\to+\infty}\left([u_n^1]^p_{s,p}+[u^2_n]_{s,p}^p\right)-O(\eps)\geq b+I_{\lambda_2}-O(\eps)=b+\lambda_2^{\frac{p}{q}} I_1-O(\eps).
\]
This entails a bound from above to $\lambda_2$, as long as $O(\eps)<b/2$, and also a bound from below for $\lambda_1$, thanks to \eqref{alphabeta}. A similar reasoning furnishes a bound from above for $\lambda_1$. Hence,
\[
I_1\geq \lim_{n\to+\infty}[u_n^1]^p_{s,p}+[u^2_n]_{s,p}^p\geq I_{\lambda_1}+I_{\lambda_2}-O(\eps).
\]
Now, pick $\eps:=\eps_k\to 0$. Up to subsequences, we evidently have $\lambda_1(\eps_k)\to\bar\lambda\in \ ]0, 1[$ and, by \eqref{alphabeta}, $\lambda_2(\eps_k)\to 1-\bar\lambda\in \ ]0, 1[$. So, due to \eqref{Ilambda}, 
$$I_1\geq\bar\lambda^{\frac{p}{q}}I_1+(1-\bar\lambda)^{\frac{p}{q}}I_1,$$
which is impossible whenever $q>p$. 

Finally, Conclusion (i) of \cite[Lemma I.1]{Lions} produces a sequence $\{y_n\}\subseteq\R^N$ with the following property:  
\[
\mbox{For every $\eps>0$ there exists $R>0$ such that}\;\int_{B_R^c(y_n)}\rho(u_n)\, dx<\eps,\;\; n\in\N.
\]
Let us next show that $\{y_n\}$ is bounded. To do this, pick $\eta$ as in Lemma \ref{lemmaint} and define $\eta_n(x):=\eta((x-y_n)/R)$, $u_n^1(x):=u_n(x)\, \eta_n(x)$. Inequality \eqref{Liebn} provides
\[
[u_n-u_n^1]_{s,p}^p\leq 2^{p-1}\left(\int_{\R^N}|1-\eta_n|^p\, |D^s u_n|^p\, dx+\int_{\R^N}|u_n|^p\, |D^s\eta_n|^p\, dx\right),
\]
while, by construction,
\[
\begin{split}
\int_{\R^N}|1-\eta_n|^p\, |D^s u_n|^p\, dx&\leq\int_{B^c_{4R}(y_n)}|D^s u_n|^p\, dx\leq\int_{B_R^c(y_n)}\rho(u_n)\, dx
<\eps,\\
\int_{\R^N}|u_n|^p\, |D^s\eta_n|^p\, dx&=\int_{B_{6R}(y_n)}|u_n|^p\, |D^s\eta_n|^p\, dx+
\int_{B^c_{6R}(y_n)}|u_n|^p\, |D^s\eta_n|^p\, dx.
\end{split}
\]
Since ${\rm Lip}(\eta_n)={\rm Lip}(\eta)/R$, through \eqref{deta1}, \eqref{nl1} (rescaled), and H\"older's inequality, we obtain
\[
\begin{split}
\int_{B_{6R}(y_n)} |u_n|^p\, |D^s\eta_n|^p\, dx&\leq \frac{C_1}{R^{ps}}\int_{B_{6R}(y_n)} |u_n|^p\, dx\leq C_2 \int_{B_{18R}(y_n)\setminus B_{12R}(y_n)}|D^s u_n|^p+\frac{|u_n|^p}{R^{ps}}\, dx\\
&\leq C_3\left[\int_{B_R^c(y_n)}\rho(u_n)\, dx+\left(\int_{B_R^c(y_n)}\rho(u_n)\, dx\right)^{\frac{p}{p^*}}\right]
\leq C_3(\eps+\eps^{\frac{p}{p^*}}).
\end{split}
\]
Analogously, on account of \eqref{deta2} and \eqref{nl2} (rescaled), one has
\[
\begin{split}
\int_{B_{6R}^c(y_n)} |u_n|^p\, |D^s\eta_n|^p\, dx&\leq C_4 \, R^N\int_{B_{6R}^c(y_n)}\frac{|u_n|^p}{|x|^{N+ps}}\, dx\leq
C_5\int_{B_{\frac{9}{2}R}\setminus B_{3R}}|D^s u_n|^p+\frac{|u_n|^p}{R^{ps}}\, dx\\
&\leq C_6\left[\int_{B_R^c(y_n)}\rho(u_n)\, dx+\left(\int_{B_R^c(y_n)}\rho(u_n)\, dx\right)^{\frac{p}{p^*}}\right]
\leq C_6(\eps+\eps^{\frac{p}{p^*}}).
\end{split}
\]
Gathering together the above inequalities produces 
\[
[u_n-u_n^1]_{s,p}^p\leq C\, (\eps+\eps ^{\frac{p}{p^*}}),
\]
and, to see that $\{y_n\}$ is bounded, we  proceed exactly as in \cite[p. 64]{Lions2}.

Finally, the compactness of $\{u_n\}$ stems from the Second Concentration-Compactness Lem\-ma as performed in \cite[Theorem 2.5]{MS}. It suffices to substitute $\|u\|_{p^*}$ with $\|u\|_{\alpha, q}$ in the proof.
\end{proof}
\section{Regularity estimates}

Recall that the weak-$L^q$ quasi-norm of a measurable function $u:\R^N\to \R$ is defined by setting
\[
\|u\|_{L^{q, \infty}}:=\sup_{k>0}k|\{|u(x)|>k\}|^{1/q}.
\]
While in the next lemma we consider arbitrary open $\Omega\subseteq \R^N$, we will be mainly interested in the case $\Omega=\R^N$.
\begin{theorem}[Summability estimates]\label{Rlemma}
Let $N>ps$, let $\Omega\subseteq\R^N$ be nonempty open, and let $f\in L^r(\R^N)$ for some $r\geq 1$. Suppose $u\in W^{s,p}_0(\Omega)$ weakly solves $(-\Delta_p)^su= f$ in $\Omega$, in the sense of \eqref{locweak}. Then there exists a constant $C>0$ such that
\begin{align}
\label{stimalr4}
\|u\|_{L^{\frac{p^*}{p'}, \infty}}&\leq C\, \|f\|_1^{\frac{1}{p-1}} & \quad & \text{if}\quad  r=1,\\ 
\label{stimalr1}
\|u\|_t&\leq C\, \|f\|_r^{\frac{1}{p-1}}&\quad &\text{if }\quad 1<r<\frac{N}{ps},\; t=\frac{N(p-1)r}{N-psr},\\
\label{stimalr2}
\|u\|_t&\leq C\, \|f\|_{N/ps}^{\frac{t-p^*}{t(p-1)}}\, \|u\|_{p^*}^{\frac{p^*}{t}}&\quad &\text{if}\quad r=\frac{N}{ps},\; t\geq p^*,\\
\label{stimalr3}
\|u\|_{\infty}&\leq C\, \|f\|_r^{\frac{r'}{p^*-r'}}\, \|u\|_{p^*}^{\frac{p^*-pr'}{p^*-r'}}&\quad &\text{if}\quad \frac{N}{ps}<r\leq +\infty.
\end{align}
The constant $C$ depends only on $N, p, s, r$ and possibly $t$ in the case $r=\frac{N}{ps}$.
\end{theorem}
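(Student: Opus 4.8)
\textbf{Proof strategy for Theorem \ref{Rlemma}.}
The plan is to use the standard Moser/De Giorgi-type iteration adapted to the fractional $p$-Laplacian, testing the equation against powers (and truncations of powers) of $u$ and exploiting the fractional Sobolev embedding $\dot W^{s,p}(\R^N)\hookrightarrow L^{p^*}(\R^N)$ together with Lemma \ref{lemmag}. The key algebraic input is that, for $g(t)=|t|^{\beta-1}t$ with $\beta\geq 1$, the function $G$ defined in \eqref{defG} behaves like $|t|^{(\beta+p-1)/p}$ up to a multiplicative constant depending on $\beta$ and $p$, so Lemma \ref{lemmag} gives $[\,|u|^{(\beta+p-1)/p-1}u\,]_{s,p}^p\leq C(\beta,p)\,\langle(-\Delta_p)^su,|u|^{\beta-1}u\rangle=C(\beta,p)\int f\,|u|^{\beta-1}u\,dx$, and the left-hand side controls $\|u\|_{(\beta+p-1)p^*/p}^{(\beta+p-1)p^*/p}$ by Sobolev. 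One must check that $|u|^{\beta-1}u$ (or its truncations $|u|^{\beta-1}u\wedge k^\beta$, which also fit Lemma \ref{lemmag} since the relevant $g$ stays absolutely continuous and non-decreasing) is an admissible suitable test function in the sense of \eqref{locweak}; since $u\in L^{p^*}$ the truncated test functions are bounded with the right integrability against $f\in L^r$, so one argues first with truncations and then passes to the limit by monotone/Fatou once the a priori bounds are in place.

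The four cases are then distinguished by how one estimates $\int f\,|u|^{\beta}$ via Hölder. First I would do \eqref{stimalr4}: with $r=1$ one takes $g$ a truncation so that $|u|^{\beta-1}u$ is replaced by $\min(|u|,k)^{\beta-1}\cdot\!$-type test functions and runs the Moser scheme keeping $\|f\|_1$ on the right; choosing the exponent so that no genuine iteration is needed (a single step, the ``$\beta\to 1$'' endpoint) yields the weak-type bound $\|u\|_{L^{p^*/p',\infty}}\lesssim\|f\|_1^{1/(p-1)}$, essentially because the level-set measure of $u$ at height $k$ is controlled by $k^{1-p}\|f\|_1$ after testing with a truncation and using Sobolev on the truncated function. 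For \eqref{stimalr1}, with $1<r<N/(ps)$ one writes $\int f|u|^\beta\leq\|f\|_r\||u|^\beta\|_{r'}$, feeds this into the Sobolev estimate for $G(u)$, and iterates $\beta$ upward along a geometric sequence; the iteration converges and terminates precisely at $t=N(p-1)r/(N-psr)$, the usual fixed point of the recursion $\beta\mapsto$ (new exponent), giving $\|u\|_t\lesssim\|f\|_r^{1/(p-1)}$. For \eqref{stimalr2} (the borderline $r=N/(ps)$) the same recursion has $L^{p^*}$ as a scale-invariant fixed point, so no improvement past $p^*$ is automatic; instead one interpolates, splitting $f$ or $|u|$ to trade an $\varepsilon$ of the critical norm, and obtains for each $t\geq p^*$ the stated bound with the Hölder-dictated exponents $\frac{t-p^*}{t(p-1)}$ on $\|f\|_{N/ps}$ and $\frac{p^*}{t}$ on $\|u\|_{p^*}$. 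Finally \eqref{stimalr3}, for $r>N/(ps)$ (including $r=\infty$), is the genuine De Giorgi iteration for $L^\infty$: testing with $(u-k)_+^{\,\text{power}}$ on super-level sets, using $\int_{\{u>k\}}|f|\leq\|f\|_r|\{u>k\}|^{1/r'}$, and the fractional Sobolev inequality produces a super-linear recursion for the quantities $\int_{\{u>k_j\}}(u-k_j)_+^{p^*}$ with geometrically increasing levels $k_j$, whose fast-geometric-convergence lemma forces the tail to vanish beyond a level proportional to $\|f\|_r^{r'/(p^*-r')}\|u\|_{p^*}^{(p^*-pr')/(p^*-r')}$, i.e. the asserted $L^\infty$ bound (the exponent bookkeeping follows from balancing the two factors in the recursion).

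\textbf{Main obstacle.}
The main difficulty is not the iteration bookkeeping, which is by now routine, but rather the \emph{admissibility of the test functions on all of $\R^N$}: since $\Omega$ may be unbounded and $f$ is only in $L^r(\R^N)$ (not $L^{(p^*)'}$), one cannot directly plug $|u|^{\beta-1}u$ into \eqref{locweak} — it need not be in $W^{s,p}_0(\Omega)$ nor need $f\,|u|^{\beta-1}u$ be in $L^1$. The fix is to work throughout with bounded, compactly-“truncated-in-height’’ test functions $g_k(u)$ (so that $g_k(u)$ is bounded, $g_k(u)\in\dot W^{s,p}$ by Lemma \ref{lemmag} applied to $g_k$, and $f\,g_k(u)\in L^1$ because $g_k(u)\in L^{p^*}\cap L^\infty$ and $f\in L^r\subseteq L^1_{\rm loc}$ — with a separate small argument that the product is globally $L^1$, using that $u\in L^{p^*}$ controls the tails), derive the estimate with $k$, and only then let $k\to\infty$ using monotone convergence on the left (Fatou for $[G_k(u)]_{s,p}$) and the already-available summability of $u$ on the right. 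A secondary technical point is verifying that the specific $g$ and $G_k$ arising from truncated powers genuinely satisfy the hypotheses of Lemma \ref{lemmag} (absolute continuity and monotonicity of $g$, and that $G$ is the stated antiderivative of $g'^{1/p}$), which is elementary but must be stated carefully since the constant $C(\beta,p)$ in the resulting inequality — and hence the blow-up of constants as $\beta$ grows along the iteration — is what one tracks to ensure the series of iterates converges.
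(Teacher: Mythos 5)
There is a genuine gap in your plan for \eqref{stimalr1}: your scheme only tests with powers $g(t)=|t|^{\beta-1}t$, $\beta\geq 1$, and then every exponent it can produce satisfies $\frac{\beta+p-1}{p}p^*\geq p^*$. But the target exponent $t=\frac{N(p-1)r}{N-psr}$ is \emph{below} $p^*$ whenever $1<r<(p^*)'$ (equivalently $r'>p^*$), a nonempty part of the range in \eqref{stimalr1}; for such $r$ the Hölder step $\int f|u|^{\beta}\,dx\leq\|f\|_r\||u|^\beta\|_{r'}$ is only usable with $\beta=\beta_0=\frac{(p-1)p^*}{pr'-p^*}<1$, and ``iterating $\beta$ upward'' has nothing to converge to. Sub-unit powers are not Lipschitz at $0$, $g'$ blows up there, and the relation between $g$ and the function $G$ of \eqref{defG} is no longer the simple one you invoke; so neither your admissibility argument nor your use of Lemma \ref{lemmag} covers this case as written. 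The paper resolves it with a dedicated construction: a truncated, $\eps$-shifted power $\tilde g(t)=\min\{k^{\beta_0},\max\{t,\eps\}^{\beta_0}-\eps^{\beta_0}\}$ (which \emph{is} Lipschitz), together with an elementary pointwise inequality comparing $|\tilde g|^{r'}$ with $|\tilde G|^{p^*}$, after which the same absorption argument closes. You would need to add something of this kind; without it the statement is only proved for $r\geq (p^*)'$.

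Two further points, which are fixable rather than fatal. First, even when $r'\leq p^*$ your description of \eqref{stimalr1} as an upward iteration ``terminating at $t$'' is imprecise: the exponent recursion is affine with ratio $p^*/(pr')<1$ for $r<\frac{N}{ps}$, so it only approaches $t$ asymptotically and you would need a limiting step (uniform constants plus Fatou along exponents $e_n\uparrow t$). The paper avoids iteration altogether here: it chooses $\beta_0$ exactly at the fixed point, so that both sides of the Moser inequality involve the same $L^t$ norm of the \emph{truncated} function $u_{k,\eps}$ — finite because $u_{k,\eps}$ is bounded with finite-measure support — and absorbs, then lets $k\to\infty$, $\eps\to 0$. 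Second, at $r=\frac{N}{ps}$ the recursion ratio equals $1$, so the exponents still improve (arithmetically); the paper simply runs finitely many steps plus interpolation after normalizing $\|u\|_{p^*}=\|f\|_{N/ps}=1$ by scaling, which is cleaner than your $\eps$-splitting of the critical norm, and it explains why $C$ depends on $t$ there. Your De Giorgi route to \eqref{stimalr3} and your treatment of admissibility (bounded truncations with finite-measure support, then $k\to\infty$) are consistent with the paper, which uses an infinite Moser iteration with the constant degradation $C_n\gtrsim\tilde\beta_n^{1-p}$ instead; either works, and your $r=1$ argument is the paper's, up to a harmless slip in the exponent of the level-set bound.
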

\begin{proof}
Given $k>\eps>0$, $\beta\geq 1$ we define
$$t_{k, \eps}:=\min\{k, (t-\eps)_+\},\quad g_\beta(t):=(t_{k, \eps})^\beta\quad\forall\, t\in\R.$$ 
Clearly, $g$ is non-decreasing, Lipschitz continuous, and
\begin{equation}\label{constG}
G_\beta(t)=\frac{\beta^{1/p} p}{\beta+p-1} (t_{k, \eps})^{\frac{\beta+p-1}{p}},
\end{equation}
with $G_\beta$ as in \eqref{defG}. Moreover, $g_\beta\circ u\in W^{s,p}_0(\Omega)$ turns out to be a suitable test function, because it is bounded and has a finite measure support. Thus, using Lemma \ref{lemmag},  Sobolev inequality on the left, and H\"older inequality on the right, yields
\begin{equation}\label{moser}
C\left\| u_{k, \eps}^{\frac{\beta+p-1}{p}}\right\|_{p^*}^p\leq [G_\beta(u)]_{s, p}^p\leq \langle(-\Delta_p)^s u, g_\beta\circ u\rangle=\int_\Omega f \, g_\beta\circ u\, dx\leq\|f\|_r\, \|u_{k, \eps}^\beta\|_{r'}
\end{equation}
for some $C=C(N, p, s, \beta)>0$ and any $r\geq 1$.

{\em Case 1: $r=1$ (whence $r'=\infty$).}\\
Pick $\beta:=1$ in \eqref{moser}. By the Tchebychev inequality one has
$$k^p\, |\{|u|\geq k\}^{\frac{p}{p^*}}\leq \|u_{k, \eps}\|_{p^*}^p\leq C\, \|f\|_1\, \|u_{k, \eps}\|_\infty\leq C\, \|f\|_1 \, k,$$
which easily entails \eqref{stimalr4} once $\eps\to 0^+$ and the supremum over $k>0$ is taken. 

{\em Case 2: $1<r<\frac{N}{ps}$ and $r'\leq p^*$}. \\
These inequalities force
\begin{equation}\label{betazero}
\beta_0(r):=\frac{(p-1)p^*}{pr'-p^*}\geq 1
\end{equation}
as well as
\begin{equation}\label{condbeta}
\frac{\beta_0+p-1}{p}p^*=\beta_0 r'=\frac{N(p-1)r}{N-psr}.
\end{equation}
If $\beta:=\beta_0$  then \eqref{moser} becomes \eqref{stimalr1} with $u:=u_{k, \eps}$. Letting $k\to +\infty$, $\eps\to 0^+$ we achieve the conclusion.

{\em - Case 3: $1<r<\frac{N}{ps}$ and $r'> p^*$}. \\
In this case, $0<\beta_0<1$, with $\beta_0$ given by \eqref{betazero}, and $g$ is no longer Lipschitz continuous.
Define, provided $k>\eps>0$,
$$\tilde g(t):=\min\{k^{\beta_0}, \max\{t, \eps\}^{\beta_0}-\eps^{\beta_0}\} \quad \forall\, t\in\R^+_0,\quad
\tilde g (t):=-\tilde g(-t)\quad\forall\, t\in\R^-.$$
The inequality
\[
\left(\frac{\beta_0^{1/p}p}{\beta_0+p-1}\right)^{p^*} |\tilde g(t)|^{r'}\leq |\tilde G(t)|^{p^*}
\]
is reduced to
\[
\frac{(\tau^q-1)^{1/q}}{\tau-1}\geq 1,\qquad q:=\frac{r'}{p}>1,\qquad  \tau:=(t/\eps)^{\beta_0}\geq 1,
 \]
which can be verified via elementary considerations. Observe also that $\tilde g$ is Lipschitz continuous. So, $\tilde g\circ u\in W^{s,p}_0(\Omega)$ turns out to be a suitable test function, because it is bounded and has finite measure support. On account of \eqref{condbeta}, the same argument employed for proving \eqref{moser} produces here
$$C\, \|\tilde g\circ u\|_{r'}^{\frac{pr'}{p^*}}\leq \|f\|_{r}\, \|\tilde g\circ u\|_{r'}.$$
As before, this entails \eqref{stimalr1}.

{\em Case 4: $r\geq \frac{N}{ps}$}.\\
Without loss of generality, we may suppose $\|u\|_{p^*}=\|f\|_r=1$. Indeed, if  $v^{\lambda, \mu}(x):=\lambda v (\mu x)$ for every $\lambda, \mu>0$ and measurable $v:\R^N\to\R$, then
\[
(-\Delta_p)^s u^{\lambda, \mu}=\lambda^{p-1}\mu^{ps} f(\mu x)=f^{\lambda^{p-1} \mu^{ps}, \mu}.
\]
Since there obviously exist $\bar\lambda, \bar\mu>0$ such that $\|u^{\bar\lambda, \bar\mu}\|_{p^*}=\|f^{\bar\lambda^{p-1}\bar\mu^{ps}, \bar\mu}\|_r=1$, showing \eqref{stimalr2}--\eqref{stimalr3} for $u^{\bar\lambda, \bar\mu}$ actually gives the general case by scaling and homogeneity.  Define 
\begin{equation*}
\tilde\beta_0:=p^*,\quad  \tilde\beta_{n+1}:=p^*\frac{\frac{\tilde\beta_{n}}{r'}+p-1}{p},
\end{equation*}
and test the equation $(-\Delta_p)^su=f$ with $(u_{k, \eps})^{\beta_n}$, where
$$\beta_n:=\tilde\beta_n/r'\geq\tilde\beta_0/r'\geq p>1.$$
Then \eqref{moser} reads
\begin{equation}\label{moser2}
C_{n+1}\left\|u_{k, \eps}\right\|_{\tilde\beta_{n+1}}^{\tilde\beta_{n+1}\frac{p}{p^*}}\leq \left\|u_{k, \eps}\right\|_{\tilde\beta_n}^{\frac{\tilde\beta_n}{r'}},\quad C_1\|u_{k, \eps}\|_{\tilde\beta_1}^{\tilde\beta_{1}\frac{p}{p^*}}\leq 1
\end{equation}
because $\|u\|_{p^*}=\|f\|_r=1$. Now, from $r\geq \frac{N}{ps}$ it follows $\tilde\beta_n\to +\infty$ as $n\to +\infty$. So, if $t\geq p^*$ then $\tilde\beta_{n}\leq t\leq \tilde\beta_{n+1}$ for some $n\in\N$. By interpolation one has
$$\|u_{k, \eps}\|_t\leq \|u_{k, h}\|_{\tilde\beta_n}^\theta\|u_{k, \eps}\|_{\tilde\beta_{n+1}}^{1-\theta}\leq 
C'_n \|u_{k, \eps}\|_{\tilde\beta_{n}}^{a_{n}}\leq C'_{n-1} \|u_{k, \eps}\|_{\tilde\beta_{n-1}}^{a_{n-1}}\leq  \dots\leq 
C'_0\|u_{k, \eps}\|_{p^*}^{a_0}= C_0'(t)$$
with appropriate $a_n, C'_n>0$. Letting $k\to +\infty$ and $\eps\to 0^+$ yields \eqref{stimalr2} after scaling back. Finally, suppose $r>\frac{N}{ps}$. Through \eqref{constG} we achieve $C_n\geq C/\tilde\beta_n^{p-1}$ for any sufficiently large $n$.
This polynomial decay ensures that \eqref{moser2} can be iterated {\em ad infinitum} provided $\{\tilde\beta_n\}$ grows geometrically, which holds true being $r>\frac{N}{ps}$. One thus has
\[
\|u_{k, \eps}\|_\infty=\lim_{n\to +\infty}\|u_{k, \eps}\|_{\tilde\beta_{n+1}}\leq \lim_{n\to +\infty}C_n^{-\frac{p^*}{p\tilde\beta_{n+1}}}\|u_{k, \eps}\|_{\tilde{\beta}_n}^{\frac{p^*}{pr'} \frac{\tilde\beta_n}{\tilde\beta_{n+1}}}\leq \dots \leq C(n, N, p, s), 
\]
and the proof of \eqref{stimalr3} goes on as before.
\end{proof}

The next corollary shows that the (lower) summability threshold at which $|(-\Delta_p)^s u|$ exhibits a better decay rate than the natural one is $r=(p^*)'$.
\begin{corollary}
\label{cordecay}
Let $N>ps$ and let $u\in \dot{W}^{s,p}(\R^N)$ be a radial, radially decreasing weak solution of $(-\Delta_p)^s u=f$ in $\R^N$,  where $f\in L^r(\R^N)$ for some $1\leq r\leq \frac{p^*}{p^*-1}$. Then, for a suitable $C=C(N, p, s)$ it holds
\begin{equation}\label{decay}
|u(R)|\leq \frac{C\, \|f\|_r^{\frac{1}{p-1}}}{R^{\frac{N-psr}{(p-1)r}}}\quad \forall\, R>0.
\end{equation}
\end{corollary}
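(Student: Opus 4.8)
The plan is to combine the weak-$L^{q}$ estimate \eqref{stimalr4} (in the case $r=1$) and the strong $L^{t}$ estimates \eqref{stimalr1} (in the case $1<r<N/ps$, which covers the full range $1\le r\le (p^*)'$ since $(p^*)' = p^*/(p^*-1) < N/ps$) with the monotonicity of $u$ to convert an integral bound into a pointwise bound. For a radial, radially decreasing nonnegative function $u$ one has the elementary inequality
\[
\omega_N R^N\, u(R)^{t} \;\le\; \int_{B_R} |u|^{t}\, dx \;\le\; \|u\|_{t}^{t},
\]
so that $u(R)\le \omega_N^{-1/t} R^{-N/t}\|u\|_t$ for every $R>0$; the analogous one-line argument works with $t$ replaced by the weak exponent $q$, using $\|u\|_{L^{q,\infty}}$ in place of $\|u\|_q$, since $\omega_N R^N u(R)^q \le |\{|u|>u(R)/2\}|\cdot (u(R)/2)^q\cdot C$ — more precisely, $|\{|u|\ge u(R)\}|\ge |B_R|=\omega_N R^N$ forces $u(R)\,(\omega_N R^N)^{1/q}\le \|u\|_{L^{q,\infty}}$.

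First I would treat the endpoint $r=1$: apply \eqref{stimalr4} to get $\|u\|_{L^{p^*/p',\infty}}\le C\|f\|_1^{1/(p-1)}$, then feed this into the monotonicity inequality above with $q=p^*/p'$. Since
\[
\frac{N}{q} \;=\; \frac{N p'}{p^*} \;=\; \frac{N(p-1)}{N-ps}\cdot\frac{N-ps}{N}\cdot\frac{1}{1}
\]
— wait, more cleanly: $N/q = N p'/p^* = N(p-1)/\bigl(p\cdot \tfrac{Np}{N-ps}\bigr)\cdot p = (N-ps)/(p-1)$, which is exactly $\tfrac{N-psr}{(p-1)r}$ at $r=1$ — one obtains \eqref{decay}. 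Next, for $1<r\le (p^*)'$ I would apply \eqref{stimalr1} with $t=\tfrac{N(p-1)r}{N-psr}$, giving $\|u\|_t\le C\|f\|_r^{1/(p-1)}$, and again invoke the monotonicity inequality; here $N/t = (N-psr)/\bigl((p-1)r\bigr)$ is precisely the exponent appearing in \eqref{decay}, so the conclusion follows verbatim. The constants coming out of \eqref{stimalr4} and \eqref{stimalr1} depend only on $N,p,s,r$, and absorbing the harmless $\omega_N^{-1/t}$ factor keeps the dependence of $C$ as claimed.

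The only genuine point requiring a little care is the justification of the monotonicity inequality in the weak-$L^q$ case (and checking that the exponent arithmetic $N/q$ resp. $N/t$ matches $\tfrac{N-psr}{(p-1)r}$), together with verifying that $(p^*)' < N/ps$ so that the range $1\le r\le (p^*)'$ falls within the hypotheses of \eqref{stimalr1}–\eqref{stimalr4} — indeed $(p^*)' = \tfrac{Np}{Np - N + ps}$ and $N/ps > (p^*)'$ is equivalent to $N - ps < Np - N + ps$, i.e. $2N(1-s/\,\cdot)$... this reduces to $2ps < N(p+1) - 2ps$, clearly true. I do not anticipate any serious obstacle; this is essentially a packaging of Theorem \ref{Rlemma} with the trivial radial-decreasing bound, and the main work — the Moser iteration producing the summability estimates — has already been carried out.
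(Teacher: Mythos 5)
Your proposal is correct and follows essentially the paper's own route: combine \eqref{stimalr4} (for $r=1$) and \eqref{stimalr1} (for $1<r\leq (p^*)'$, noting $(p^*)'<\tfrac{N}{ps}$) with the elementary pointwise bound $u(R)\leq C R^{-N/t}\|u\|_t$ (resp.\ its weak-$L^q$ version) for radially non-increasing functions, which the paper simply quotes from \cite[Lemma 2.9]{BMS} instead of reproving. Only cosmetic remark: your verification that $(p^*)'<\tfrac{N}{ps}$ is garbled (the intermediate inequality $4ps<N(p+1)$ is neither needed nor always true); the correct one-line check is that $(p^*)'<\tfrac{N}{ps}$ is equivalent to $(p-1)(N-ps)>0$, which holds by hypothesis.
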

\begin{proof}
The conclusion directly follows from \eqref{stimalr4}, \eqref{stimalr1}, and the decay estimates for radially decreasing functions in Lorentz spaces established in \cite[Lemma 2.9]{BMS}.  It suffices to observe that $N>ps$ forces $\frac{p^*}{p^*-1}<\frac{N}{ps}$ and that $r\leq \frac{p^*}{p^*-1}$ means $p^*\geq \frac{N(p-1)r}{N-psr}$.
\end{proof}

Notice that, if $r\geq \frac{p^*}{p^*-1}$, then the natural summability $u\in L^{p^*}(\R^N)$ provides a faster decay rate for radially decreasing functions than the one deduced from \eqref{stimalr1}--\eqref{stimalr3}, namely
$$|u(R)|\leq C\, \|u\|_{p^*}\, R^{-\frac{N}{p^*}},\quad R>0.$$

The following lemma represents a higher regularity estimate in Besov spaces. \begin{lemma}[Regularity estimate]\label{regest}
Let $p,r,t>1$ and $\theta\in \ ]0, 1]$  be such that
\begin{equation}\label{deftheta}
\frac{\theta}{p}+\frac{1-\theta}{t}=\frac{1}{r'}.
\end{equation}
Suppose $u\in L^t(\R^N) \cap \dot W^{s,p}(\R^N)$ and $f\in L^r(\R^N)\cap \dot{W}^{-s, p'}(\R^N)$ satisfy $(-\Delta_p)^s u=f$ weakly in $\R^N$, as per \eqref{locweak}. Then
\begin{align}\label{Bp>2}
\sup_{|h|>0}\left\|\frac{\delta^2_hu}{|h|^{\frac{sp}{p-\theta}}}\right\|_p&\leq C\, \|f\|_{r}^{\frac{1}{p-\theta}}\, \|u\|_t^{\frac{1-\theta}{p-\theta}}&&\text{if $p\geq 2$},\\
%
\label{Bp<2}
\sup_{|h|>0}\left\|\frac{\delta^2_hu}{|h|^{\frac{2s}{2-\theta}}}\right\|_p&\leq C\, \|f\|_r^{\frac{1}{2-\theta}}\, \|u\|_t^{\frac{1-\theta}{2-\theta}}\, [u]_{s,p}^{\frac{2-p}{2-\theta}}& &\text{if $1<p<2$,}
\end{align}
with appropriate constant $C:=C(N, p, s, r, t)>0$.
\end{lemma}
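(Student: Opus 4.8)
The plan is to run a Nirenberg-type difference-quotient argument tailored to the nonlocal operator and then close a self-improving estimate, in the spirit of \cite{BL}. First I would exploit that $(-\Delta_p)^s$ commutes with translations: for $h\in\R^N\setminus\{0\}$ the translate $u_h$ weakly solves $(-\Delta_p)^su_h=f_h$ in the sense of \eqref{locweak}. Testing the difference of the equations for $u_h$ and $u$ with $\varphi:=\delta_hu=u_h-u$ then gives
\[
\langle(-\Delta_p)^su_h-(-\Delta_p)^su,\,\delta_hu\rangle=\int_{\R^N}(f_h-f)\,\delta_hu\,dx=\int_{\R^N}f\,\delta^2_hu\,dx,
\]
the last equality by the change of variables $x\mapsto x\pm h$; in particular the right-hand side is at most $\|f\|_r\,\|\delta^2_hu\|_{r'}$. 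Here $\delta_hu$ is an admissible test function and $\delta^2_hu\in L^{r'}(\R^N)$ because of the elementary bound $\|\delta_\xi u\|_p\le C\,|\xi|^s\,[u]_{s,p}$, valid for every $u\in\dot W^{s,p}(\R^N)$ (average $\eta$ over $B_{|\xi|}$ in $\delta_\xi u(x)=\delta_{\xi-\eta}u(x+\eta)+\delta_\eta u(x)$ and invoke the Gagliardo formula), which together with $u\in L^t(\R^N)$ places $\delta_hu,\ \delta^2_hu\in L^p(\R^N)\cap L^t(\R^N)\subseteq L^{r'}(\R^N)$ precisely by \eqref{deftheta}; the displayed identity is then made rigorous by approximating $f$ with $C^\infty_c(\R^N)$ functions, using $f\in L^r(\R^N)\cap\dot W^{-s,p'}(\R^N)$.

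Next I would bound the left-hand side from below by monotonicity of $t\mapsto|t|^{p-2}t$. Writing $a:=u_h(x)-u_h(y)$ and $b:=u(x)-u(y)$, so that $a-b=\delta_hu(x)-\delta_hu(y)$, the pointwise inequality $(|a|^{p-2}a-|b|^{p-2}b)(a-b)\ge c_p\,|a-b|^p$ for $p\ge2$ gives at once $\langle(-\Delta_p)^su_h-(-\Delta_p)^su,\delta_hu\rangle\ge c_p\,[\delta_hu]_{s,p}^p$. For $1<p<2$ I would use instead $(|a|^{p-2}a-|b|^{p-2}b)(a-b)\ge c_p\,|a-b|^2(|a|+|b|)^{p-2}$ and, splitting $|a-b|^p/|x-y|^{N+ps}$ as the product of $\big(|a-b|^2(|a|+|b|)^{-(2-p)}/|x-y|^{N+ps}\big)^{p/2}$ and $\big((|a|+|b|)^p/|x-y|^{N+ps}\big)^{(2-p)/2}$, apply H\"older with exponents $2/p$, $2/(2-p)$ and $\iint(|a|+|b|)^p|x-y|^{-(N+ps)}\,dx\,dy\le C\,[u]_{s,p}^p$ to get $[\delta_hu]_{s,p}^p\le C\,\big(\langle(-\Delta_p)^su_h-(-\Delta_p)^su,\delta_hu\rangle\big)^{p/2}\,[u]_{s,p}^{p(2-p)/2}$.

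To close the argument, for $p\ge2$ I would combine the two steps into $[\delta_hu]_{s,p}^p\le C\|f\|_r\|\delta^2_hu\|_{r'}$, then interpolate $\|\delta^2_hu\|_{r'}\le\|\delta^2_hu\|_p^\theta\|\delta^2_hu\|_t^{1-\theta}$ (this is \eqref{deftheta}), bound $\|\delta^2_hu\|_t\le4\|u\|_t$, and apply the estimate of the first paragraph to $g:=\delta_hu$, namely $\|\delta^2_hu\|_p=\|\delta_{-h}(\delta_hu)\|_p\le C|h|^s[\delta_hu]_{s,p}$; solving the resulting inequality for $[\delta_hu]_{s,p}$ gives $[\delta_hu]_{s,p}^{\,p-\theta}\le C|h|^{s\theta}\|f\|_r\|u\|_t^{1-\theta}$, whence, since $s+\tfrac{s\theta}{p-\theta}=\tfrac{sp}{p-\theta}$,
\[
\|\delta^2_hu\|_p\le C|h|^s[\delta_hu]_{s,p}\le C\,|h|^{\frac{sp}{p-\theta}}\|f\|_r^{\frac1{p-\theta}}\|u\|_t^{\frac{1-\theta}{p-\theta}},
\]
and taking the supremum over $h\ne0$ is \eqref{Bp>2}. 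The subquadratic case carries the extra factor $[u]_{s,p}^{p(2-p)/2}$ through the same algebra and yields \eqref{Bp<2}, with exponent $\tfrac{2-p}{2-\theta}$ on $[u]_{s,p}$. I expect the main obstacle to be this self-improving bookkeeping together with the \emph{a priori} integrability of the second difference: one needs $\delta^2_hu\in L^{r'}(\R^N)$ and $\|\delta^2_hu\|_p<+\infty$, which does \emph{not} follow from $\dot W^{s,p}(\R^N)\hookrightarrow L^{p^*}(\R^N)$ by itself but only from the elementary first-difference estimate above; with that secured, and using the correct algebraic monotonicity inequality in the subquadratic regime, the remaining manipulations are routine.
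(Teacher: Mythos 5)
Your proposal is correct, and it shares all the substantive ingredients with the paper's proof: translation invariance and testing the differenced equations with $\varphi=\delta_h u$, the monotonicity inequality $(a^{p-1}-b^{p-1})(a-b)\ge c_p|a-b|^p$ for $p\ge 2$ and its subquadratic analogue followed by H\"older with exponents $\tfrac 2p$, $\tfrac{2}{2-p}$, the interpolation $\|\delta^2_hu\|_{r'}\le\|\delta^2_hu\|_p^\theta\,\|\delta^2_hu\|_t^{1-\theta}$ coming from \eqref{deftheta}, and the first-difference estimate $\|\delta_\xi v\|_p\le C|\xi|^s[v]_{s,p}$, which is exactly \cite[Lemma A.1]{BLP} (quoted as \eqref{BL} in the paper); your averaging sketch is the standard proof of it. Where you genuinely diverge is the closing step. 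The paper inserts $[\delta_hu]_{s,p}^p\le C\|f\|_r\|u\|_t^{1-\theta}\|\delta^2_hu\|_p^\theta$ into an iteration on differentiability orders $\beta_{n+1}=s+\tfrac{\theta}{p}\beta_n$ (resp.\ $\beta_{n+1}=s+\tfrac{\theta}{2}\beta_n$), starting from $\beta_0=s$ where finiteness is known, and passes to the limit $\beta_\infty=\tfrac{sp}{p-\theta}$ (resp.\ $\tfrac{2s}{2-\theta}$); the iteration is forced there because the right-hand side is written through $\|\delta^2_hu/|h|^{\beta}\|_p$, which for $\beta>s$ is not finite a priori. You instead substitute $\|\delta^2_hu\|_p=\|\delta_{-h}(\delta_hu)\|_p\le C|h|^s[\delta_hu]_{s,p}$ on the right and absorb, which is legitimate precisely because $[\delta_hu]_{s,p}\le 2[u]_{s,p}<+\infty$, so the resulting inequality can be solved for $[\delta_hu]_{s,p}$ (with exponent $p-\theta$, resp.\ $p(1-\tfrac{\theta}{2})$), and the fixed-point exponent drops out in one step; your subquadratic bookkeeping with the factor $[u]_{s,p}^{\frac{2-p}{2-\theta}}$ checks out. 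Your route buys a shorter argument with the same structure of constants; the paper's iteration is the more robust scheme inherited from \cite{BL}, useful when the quantity to be absorbed is not known finite beforehand. The only points to write out carefully are the ones you already flag: $\delta_hu\in L^p(\R^N)\cap L^t(\R^N)\subseteq L^{r'}(\R^N)$ makes $\delta_hu$ an admissible test function for both equations (so $f\,\delta_hu,\ f_h\,\delta_hu\in L^1(\R^N)$ by H\"older, and the identity with $\int f\,\delta^2_hu\,dx$ follows by change of variables), which is the same justification the paper gives via $f\,\delta^2_hu\in L^1(\R^N)$; moreover your first-difference bound yields $\delta_hu\in L^p(\R^N)$ for \emph{every} $h\neq 0$, so the supremum in \eqref{Bp>2}--\eqref{Bp<2} is attained as a genuine supremum rather than an essential one.
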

\begin{proof}
Pick $h\in \R^N\setminus\{0\}$. By translation invariance one has
\[
\langle (-\Delta_p)^s u_h, \varphi\rangle =\int_{\R^N} f_h\, \varphi\, dx,\qquad 
\langle (-\Delta_p)^s u, \varphi\rangle =\int_{\R^N} f\, \varphi\, dx,
\]
which entails
\begin{equation}\label{eqdiff}
 \langle (-\Delta_p)^s u_h-(-\Delta_p)^s u, \varphi\rangle =\int_{\R^N}\varphi\, \delta_h f\, dx=\int_{\R^N} f\, \delta_{-h}\varphi\, dx.
\end{equation}
Observe next that $\delta^2_h u$ turns out to be a viable test function for a.e. $h\neq 0$. Indeed, from
$$[u]_{s,p}^p=\int_{\R^N}\|\delta_h u\|_p^p\, \frac{dh}{|h|^{N+ps}}<+\infty$$
we evidently infer $\|\delta_h u\|_p<+\infty$ for almost every $h$, even when neither $u$ nor $u_h$ lie in $L^p(\R^N)$. The continuity of $L^p$-norm with respect to translation yields $\delta_h u\in L^p(\R^N)$, whence $\delta^2_hu\in L^p(\R^N)$, because $\|\delta^2_h u\|_p\leq2\, \|\delta_h u\|_p$. Exploiting \eqref{deftheta} (with $\theta:=1$ if $p=r'=t$), H\"older's inequality and the inequality $\|\delta^2_h u\|_t\leq 4\, \|u\|_t$, easily provides
\begin{equation}\label{fd2u}
\|f\, \delta^2_h u\|_1\leq \|f\|_r\, \|\delta^2_h u\|_{r'}\leq \|f\|_r\, \|\delta^2_h u\|^\theta_p\, \|\delta^2_h u\|_t^{1-\theta}\leq 4\,  \|f\|_r\, \|\delta^2_h u\|^\theta_p\, \| u\|_t^{1-\theta}.
\end{equation} 
Hence, $f\, \delta_h^2 u\in L^1(\R^N)$, as desired, for a.e. $h\neq 0$. Since we will take the essential supremum in $h$, we can assume that this holds for any $h\neq 0$. We can thus set $\varphi:=\delta_hu$ in \eqref{eqdiff}, whose left-hand side becomes
\begin{equation}\label{lhs}
\begin{split}
&\langle (-\Delta_p)^s u_h-(-\Delta_p)^s u, \delta_h u\rangle=\\
&\int_{\R^{2N}}\frac{\left((u_h(x)-u_h(y))^{p-1}-(u(x)-u(y))^{p-1}\right)\left((u_h(x)-u_h(y))-(u(x)-u(y))\right)}{|x-y|^{N+ps}}
\,dx \,dy.
\end{split}
\end{equation}
Now, the proof naturally splits into two cases.

{\em Case 1: $p\geq 2$}.\\
The known inequality
\begin{equation}\label{dp1}
(a^{p-1}-b^{p-1})(a-b)\geq c_{p}|a-b|^p\quad \forall\, a, b\in \R
\end{equation}
(see, e.g., \cite[10(I)]{L}), when applied to \eqref{lhs} with $a:=u_h(x)-u_h(y)$ and $b:=u(x)-u(y)$, furnishes
$$\langle (-\Delta_p)^s u_h-(-\Delta_p)^s u, \delta_h u\rangle\geq c_p[\delta_h u]_{s,p}^p.$$
Through \eqref{eqdiff}--\eqref{fd2u}, this entails
\begin{equation}\label{newone}
\left[\delta_hu\right]_{s,p}^p\leq C\, \|f\|_r\, \|u\|_t^{1-\theta}\, \|\delta^2_h u\|_p^\theta.
\end{equation}
Since, by Lemma A1 of \cite{BLP},
\begin{equation}\label{BL}
\sup_{|h|>0}\left\|\frac{\delta_h^2 v}{|h|^\sigma}\right\|_p\leq 2\sup_{|h|>0}\left\|\frac{\delta_h v}{|h|^\sigma}\right\|_p\le C\, [v]_{\sigma, p}\quad \forall\, \sigma\in \ ]0, 1[, \quad p\geq 1,
\end{equation}
we have
$$\left\|\frac{\delta^2_hu}{|h|^{s+\frac{\theta\beta}{p}}}\right\|_p
=\frac{1}{|h|^{\frac{\theta\beta}{p}}}\left\|\frac{\delta_h(\delta_{-h} u)}{|h|^s}\right\|_p
\leq \frac{1}{|h|^{\frac{\theta\beta}{p}}}\sup_{|k|>0}\left\|\frac{\delta_{k}(\delta_{-h}u)}{|k|^s}\right\|_p
\leq \frac{C}{|h|^{\frac{\theta\beta}{p}}}[\delta_{-h}u]_{s,p}
=C\left[\frac{\delta_h u}{|h|^\frac{\theta\beta}{p}}\right]_{s,p},$$
which, on account of \eqref{newone}, easily leads to
$$\left\|\frac{\delta^2_hu}{|h|^{s+\frac{\theta}{p}\beta}}\right\|_p^p\leq C\left[\frac{\delta_h u}{|h|^\frac{\theta\beta}{p}}\right]_{s,p}^p\leq C\, \|f\|_r\, \|u\|_t^{1-\theta}\, \left\|\frac{\delta^2_h u}{|h|^{\beta}}\right\|_p^{\theta}$$
for any fixed $h\neq 0$, $\beta>0$. If $\beta:=s$ then the right-hand side is finite, because $u\in \dot{W}^{s,p}(\R^N)$ and \eqref{BL} holds. We can thus iterate on the differentiability orders $\beta_n$ defined as
\[
\begin{cases}
\beta_0:=s,\\
\beta_{n+1}:=s+\frac{\theta}{p}\beta_n, 
\end{cases}
\quad\Rightarrow\quad  \lim_{n\to+\infty}\beta_n=\frac{s}{1-\frac{\theta}{p}}=:\beta_\infty,
\]
producing the inequality
\[
\left\|\frac{\delta^2_hu}{|h|^{\beta_{n}}}\right\|_p\leq \left(C\, \|f\|_r\, \|u\|_t^{1-\theta}\right)^{\frac{1}{p}\sum_{i=0}^{n-1}\frac{\theta^i}{p^i}}\left\|\frac{\delta^2_h u}{|h|^{\beta_0}}\right\|^{\frac{\theta^{n}}{p^{n}}}_p,\quad n\in\N.
\]
Since $\theta/p<1$, one arrives at
\[
\left\|\frac{\delta^2_hu}{|h|^{\beta_\infty}}\right\|_p=\lim_{n\to +\infty}\left\|\frac{\delta^2_hu}{|h|^{\beta_n}}\right\|_p\leq C\, \|f\|_r^{\frac{1}{p-\theta}}\, \|u\|_t^{\frac{1-\theta}{p-\theta}},
\]
and \eqref{Bp>2} follows (recall that the previous inequality holds for a.e. $h\neq 0$).

{\em Case 2: $1<p<2$}.\\
It is known that \eqref{dp1} no longer holds. Nevertheless,
\begin{equation}\label{dp2}
(a^{p-1}-b^{p-1})(a-b)\geq c_p\frac{|a-b|^2}{(a^2+b^2)^{\frac{2-p}{2}}}\quad\forall\, (a,b)\in\R^2\setminus\{(0,0)\};
\end{equation}
cf. \cite[Lemma B.4]{BP}. Setting $a:=u_h(x)-u_h(y)$, $b:=u(x)-u(y)$, and raising \eqref{dp2} to $p/2$,  we obtain
\[
\begin{split}
|\delta_hu(x)-\delta_h u(y)|^p\leq &c_p^{\frac{p}{2}}\left[\left( (u_h(x)-u_h(y))^{p-1}-(u(x)-u(y))^{p-1}\right)(\delta_hu(x)-\delta_hu(y))\right]^{\frac{p}{2}}\times\\
&\times \left[ |u_h(x)-u_h(y)|^{2}+|u(x)-u(y)|^{2}\right]^{\frac{2-p}{2}\frac{p}{2}}.
\end{split}
\]
Next, multiply by $|x-y|^{-N-ps}$, integrate over $\R^{2N}$, and apply  H\"older's inequality with exponents $\frac{2}{p}$,
$\frac{2}{2-p}$. Thanks to \eqref{lhs}, this entails
\[
[\delta_h u]^p_{s,p}\leq C\langle (-\Delta_p)^s u_h-(-\Delta_p)^s u, \delta_h u\rangle^{\frac{p}{2}}\left(\int_{\R^{2N}}\frac{\left( |u_h(x)-u_h(y)|^{2}+|u(x)-u(y)|^{2}\right)^{\frac{p}{2}}}{|x-y|^{N+ps}}dxdy\right)^{1-\frac{p}{2}},
\]
which, through \eqref{eqdiff}, \eqref{fd2u}, besides the sub-additivity of $\tau\mapsto |\tau|^{\frac{p}{2}}$, gives
\[
[\delta_h u]^p_{s,p}\leq C\left(\|f\|_r\, \|\delta^2_h u\|_{r'}\right)^{\frac{p}{2}}\left([u_h]_{s,p}^p+[u]_{s,p}^p\right)^{1-\frac{p}{2}}\leq C\, [u]_{s,p}^{p(1-\frac{p}{2})}\left(\|f\|_r\, \|u\|_t^{1-\theta}\, \|\delta^2_h u\|_{p}^\theta\right)^{\frac{p}{2}}.
\]
Pick $\beta>0$ and divide by $|h|^{\beta\theta\frac{p}{2}}$. Like before we have
\[
\left\|\frac{\delta^2_h u}{|h|^{s+\beta\frac{\theta}{2}}}\right\|_p\leq C\, [u]_{s,p}^{(1-\frac{p}{2})}\left(\|f\|_r\, \|u\|_t^{1-\theta}\right)^{\frac{1}{2}}\left\|\frac{\delta^2_h u}{|h|^{\beta}}\right\|_{p}^{\frac{\theta}{2}}.
\]
Let us finally iterate on the differentiability orders $\beta_n$ defined as
\[
\begin{cases}
\beta_0:=s,\\
\beta_{n+1}:=s+\frac{\theta}{2}\beta_n
\end{cases}
\quad\Rightarrow\quad\lim_{n\to +\infty}\beta_n= \frac{2s}{2-\theta},
\]
to achieve the inequality
\[
\left\|\frac{\delta^2_h u}{|h|^{\frac{2s}{2-\theta}}}\right\|_p\leq \left[C\, [u]_{s,p}^{(1-\frac{p}{2})}\left(\|f\|_r\, \|u\|_t^{1-\theta}\right)^{\frac{1}{2}}\right]^{\sum_{i=0}^{+\infty}\frac{\theta^i}{2^i}},
\]
valid for a.e. $h\neq 0$, whence \eqref{Bp<2} follows after an elementary calculation.
\end{proof}
\begin{remark}
Estimates \eqref{Bp>2}--\eqref{Bp<2} can naturally be re-casted in the framework of Besov spaces.   Putting
\begin{equation}\label{defsigma}
\sigma:=
\begin{cases}
s\frac{p}{p-\theta}&\text{if $p\geq 2$},\\
s\frac{2}{2-\theta}&\text{if $1<p<2$},
\end{cases}
\end{equation}
the conditions $s\in \ ]0, 1[$, $\theta\in\  ]0, 1]$ force $\sigma\in\  ]s, 2s]\subseteq \ ]0, 2[$. So, the left-hand sides of \eqref{Bp>2}--\eqref{Bp<2} read as $[u]_{B^{\sigma}_{p, \infty}}$. Further, when $r<\frac{N}{ps}$ and $2\leq p\leq r'<p^*$, combining \eqref{stimalr1} with \eqref{Bp>2} easily yields
$$[u]_{B^{\sigma}_{p, \infty}}\leq C\, \|f\|_r^{\frac{1}{p-1}}.$$
\end{remark}
\section{Decay estimates}
We are now ready to prove the pointwise and Sobolev estimates stated in Section 1. 
\begin{lemma}[Interpolation inequality]\label{intbesov}
Let $p>1>\tau>s>0$, $\gamma\in\ ]0, p[$, and $\mu\in \ ]0, 1[$. Then there exists a constant  $C:=C(N,p,s,\gamma,\mu,\tau)>0$ such that
$$[u]_{s, \gamma}\leq C\, R^{\frac{N}{\gamma}-\frac{N}{p}+\mu(\tau-s)}\, [u]_{B^{\tau}_{p, \infty}}^\mu\, [u]_{s,p}^{1-\mu}$$
for every $u\in \dot B^{\tau}_{p,\infty}(\R^N)\cap \dot W^{s,p}(\R^N)$ with ${\rm supp} (u)\subseteq B_R$.
\end{lemma}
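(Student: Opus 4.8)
\emph{Proof plan.} \emph{Reduction to $R=1$.} The dilation $x\mapsto Rx$ acts by $[u(R\,\cdot)]_{s,\gamma}=R^{s-N/\gamma}[u]_{s,\gamma}$, $[u(R\,\cdot)]_{s,p}=R^{s-N/p}[u]_{s,p}$ and $[u(R\,\cdot)]_{B^{\tau}_{p,\infty}}=R^{\tau-N/p}[u]_{B^{\tau}_{p,\infty}}$; feeding these into the asserted inequality for $u(R\,\cdot)$ and matching powers of $R$, it suffices to prove $[u]_{s,\gamma}\leq C\,[u]_{B^{\tau}_{p,\infty}}^{\mu}\,[u]_{s,p}^{1-\mu}$ whenever $\operatorname{supp}(u)\subseteq B_1$. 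Set $M:=\sup_{h\neq 0}\|\delta_h u\|_p\,|h|^{-\tau}$. Since $\tau\in\ ]0,1[$, by well-known properties of Besov spaces (equivalence of the first- and second-difference seminorms for smoothness order in $]0,1[$) one has $M\leq C\,[u]_{B^{\tau}_{p,\infty}}$, so we may assume $M<\infty$ (otherwise the right-hand side is $+\infty$) as well as $[u]_{s,p}<\infty$. Picking $h_0$ with $|h_0|=3$, the supports of $u$ and $u_{h_0}$ are disjoint, hence $\delta_{h_0}u=-u$ on $B_1$ and $\|u\|_p=\|u\|_{L^p(B_1)}\leq\|\delta_{h_0}u\|_p\leq 3^{\tau}M$. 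Because $0<\gamma<p<p^*$ and $\operatorname{supp}(u)\subseteq B_1$, Hölder's inequality on $B_1$ now yields both $\|u\|_\gamma\leq C\,M$ and, through $\dot{W}^{s,p}(\R^N)\hookrightarrow L^{p^*}(\R^N)$, $\|u\|_\gamma\leq C\,\|u\|_{p^*}\leq C\,[u]_{s,p}$.

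\emph{Splitting.} Using $y=x+h$, write $[u]_{s,\gamma}^{\gamma}=\int_{\R^N}\|\delta_h u\|_\gamma^{\gamma}\,|h|^{-N-\gamma s}\,dh=I_1+I_2$, the pieces being the integrals over $\{|h|\leq 2\}$ and $\{|h|>2\}$; note that no Minkowski-type inequality for $[\,\cdot\,]_{s,\gamma}$ is invoked, which is essential since $\gamma$ may be below $1$. For $|h|>2$ the supports of $u_h$ and $u$ are disjoint, so $\|\delta_h u\|_\gamma^{\gamma}=2\|u\|_\gamma^{\gamma}$ and
\[
I_2=2\|u\|_\gamma^{\gamma}\int_{|h|>2}\frac{dh}{|h|^{N+\gamma s}}=C\,\|u\|_\gamma^{\gamma}=C\,\|u\|_\gamma^{\gamma\mu}\,\|u\|_\gamma^{\gamma(1-\mu)}\leq C\,M^{\gamma\mu}\,[u]_{s,p}^{\gamma(1-\mu)},
\]
the last step applying the two bounds on $\|u\|_\gamma$ from the previous paragraph to the two factors.

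\emph{Short distances.} For $|h|\leq 2$ one has $\operatorname{supp}(\delta_h u)\subseteq B_3$, so Hölder's inequality on a set of measure $\leq|B_3|$ (valid for any $0<\gamma<p$) gives $\|\delta_h u\|_\gamma\leq C\,\|\delta_h u\|_p$. The plan is then to split $\|\delta_h u\|_p^{\gamma}=\|\delta_h u\|_p^{\gamma\mu}\,\|\delta_h u\|_p^{\gamma(1-\mu)}$, bound the first factor by $(M|h|^{\tau})^{\gamma\mu}$, and apply Hölder in $h$ with the conjugate exponents $q_1:=\tfrac{p}{\gamma(1-\mu)}>1$ and $q_2:=\tfrac{p}{p-\gamma(1-\mu)}$ (legitimate since $\gamma(1-\mu)<p$), regrouping the factors so that the $q_1$-integral is $\bigl(\int_{\R^N}\|\delta_h u\|_p^{p}\,|h|^{-N-ps}\,dh\bigr)^{\gamma(1-\mu)/p}\leq[u]_{s,p}^{\gamma(1-\mu)}$ while the $q_2$-integral is the radial one $\bigl(\int_{|h|\leq 2}|h|^{c}\,dh\bigr)^{1/q_2}$ with $c+N=\dfrac{p\,\gamma\mu\,(\tau-s)}{p-\gamma(1-\mu)}>0$; the positivity — which is exactly where $\tau>s$ enters — makes this last integral a finite constant depending only on $N,p,s,\gamma,\mu,\tau$. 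Hence $I_1\leq C\,M^{\gamma\mu}\,[u]_{s,p}^{\gamma(1-\mu)}$.

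Adding the two estimates gives $[u]_{s,\gamma}^{\gamma}\leq C\,M^{\gamma\mu}\,[u]_{s,p}^{\gamma(1-\mu)}$; taking the $\gamma$-th root, inserting $M\leq C\,[u]_{B^{\tau}_{p,\infty}}$, and reversing the rescaling yields the claim. I expect the long-distance term $I_2$ to be the main obstacle: there the extra differentiability $\tau>s$ is of no help (the bound $\|\delta_h u\|_p\leq M|h|^{\tau}$ only deteriorates as $|h|\to\infty$), so one is forced to exploit compact support to break $\delta_h u$ into disjoint translates, which in turn requires the Poincaré-type bounds $\|u\|_p\leq C\,M$ and $\|u\|_\gamma\leq C\,[u]_{s,p}$. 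The one substantial computation is the exponent bookkeeping in the $I_1$ estimate: replacing $M$ by the first-difference $L^p$-energy alone would make the $h$-integral logarithmically divergent at the origin, so the smoothness gain carried by $M$ is genuinely indispensable.
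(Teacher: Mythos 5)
Your proof is correct and follows essentially the same route as the paper's: scale to $R=1$, split the seminorm at $|h|=2$, use the disjoint supports of $u$ and $u_h$ to reduce the far range to $\|u\|_\gamma$, which is bounded by both seminorms, and on the near range pass from $L^\gamma$ to $L^p$ by H\"older in $x$, spend a $\mu$-fraction of $\|\delta_h u\|_p$ on the $\tau$-seminorm, and apply H\"older in $h$, the radial integral converging precisely because $\tau>s$. The only (harmless) deviations are that you control $\|u\|_\gamma\lesssim [u]_{s,p}$ via the Sobolev embedding rather than the paper's disjoint-support identity, and you state explicitly the Marchaud-type bound $\sup_{h\neq 0}\|\delta_h u\|_p\,|h|^{-\tau}\leq C\,[u]_{B^{\tau}_{p,\infty}}$ that the paper uses tacitly.
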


\begin{proof}
Suppose $R=1$. Observe that if $|h|>2$ then $u$, $u_h$, and $u_{2h}$ have disjoint supports. Hence,
\begin{equation}\label{ugu}
{\rm supp}(u)\subseteq B_1\quad \Rightarrow \quad\|\delta^2_hu\|_q=2^{1/q} \|\delta_h u\|_q=4^{1/q}\|u\|_q\quad \text{for any $|h|>2$, $q>0$,}
\end{equation}
which implies
\[
\begin{split}
[u]_{s,\gamma}^\gamma&=\int_{|h|\le 2}\frac{\|\delta_h u\|_\gamma^\gamma}{|h|^{s\gamma}}\frac{dh}{|h|^N}+2\int_{|h|> 2}\frac{\|u\|_\gamma^\gamma}{|h|^{s\gamma}}\frac{dh}{|h|^N}=\int_{|h|\le 2}\frac{\|\delta_h u\|_\gamma^\gamma}{|h|^{s\gamma}}\frac{dh}{|h|^N}+C_1\, \|u\|_\gamma^\gamma\\
&\leq \int_{|h|\le 2}\frac{\|\delta_h u\|_\gamma^\gamma}{|h|^{s\gamma}}\frac{dh}{|h|^N}+C_2\, \|u\|_p^\gamma.
\end{split}
\]
The first term will be estimated through successive applications of the H\"older's inequality
\[
\begin{split}
\int_{|h|\leq 2}\frac{\|\delta_h u\|_\gamma^\gamma}{|h|^{s\gamma}}\frac{dh}{|h|^N}&\leq 
C_3\int_{|h|\leq 2}\frac{\|\delta_h u\|_p^\gamma}{|h|^{s\gamma}}\frac{dh}{|h|^N}\\
&=C_3\int_{|h|\leq 2}\frac{\|\delta_h u\|_p^{(1-\mu)\gamma}}{|h|^{s(1-\mu)\gamma}}\frac{\|\delta_h u\|_p^{\mu\gamma}}{|h|^{\tau\mu\gamma}}\frac{dh}{|h|^{N-(\tau-s)\mu\gamma}}\\
&\leq C_3\, [u]_{B^{\tau}_{p,\infty}}^{\mu\gamma}\int_{|h|\leq 2}\frac{\|\delta_h u\|_p^{(1-\mu)\gamma}}{|h|^{s(1-\mu)\gamma}}|h|^{(\tau-s)\mu\gamma}\frac{dh}{|h|^{N}}\\
&\leq C_3\, [u]_{B^{\tau}_{p,\infty}}^{\mu\gamma}\left(\int_{|h|\leq 2}\frac{\|\delta_h u\|_p^{p}}{|h|^{sp}}\frac{dh}{|h|^{N}}\right)^{\frac{(1-\mu)\gamma}{p}}\left(\int_{|h|\leq 2}|h|^{(\tau-s)\mu\gamma}\frac{dh}{|h|^{N}}\right)^{1-\frac{(1-\mu)\gamma}{p}}\\
&\leq C_4\, [u]_{B^{\tau}_{p, \infty}}^{\mu\gamma}[u]_{s,p}^{(1-\mu)\gamma}.
\end{split}
\]
To evaluate the other term we  use \eqref{ugu} and obtain
\[
\|u\|_p^\gamma=C_5\, \|u\|_p^\gamma\left(\int_{|h|>2}\frac{dh}{|h|^{N+ps}}\right)^{\frac{\gamma}{p}}=\frac{C_5}{2^{\gamma/p}}\left(\int_{|h|>2}\frac{\|\delta_hu\|_p^p}{|h|^{ps}}\frac{dh}{|h|^{N}}\right)^{\frac{\gamma}{p}}\leq \frac{C_6}{2^{\gamma/p}}[u]_{s,p}^\gamma.
\]
Similarly, by H\"older's inequality and \eqref{ugu} again,
\[
\|u\|_p^\gamma=2^{\tau\gamma}\sup_{|h|>2} \left(\frac{\|u\|_p}{|h|^\tau}\right)^\gamma=\frac{2^{\tau\gamma}}{4^{\gamma/p}} \left(\sup_{|h|>2}\left\|\frac{\delta_{h}^2u}{|h|^\tau}\right\|_p \right)^\gamma\leq \frac{2^{\tau\gamma}}{4^{\gamma/p}}[u]_{B^{\tau}_{p,\infty}}^\gamma.
\]
Gathering together the above inequalities yields
\[
\|u\|_p^\gamma\leq C_7\, [u]_{B^{\tau}_{p, \infty}}^\mu[u]_{s,p}^{1-\mu},
\]
as desired. Now, the general case $R\neq 1$ comes out from a standard scaling argument.
\end{proof}
\begin{remark}
The conclusion of Lemma \ref{intbesov} actually holds for any $\tau\in\ ]0,p[$, but the proof is slightly more complicated once $\tau\geq 1$, which we do not need here. Moreover, it should be noted that the constant $C$ blows up as $\tau\to s^+$, because $C\geq C_4$ and 
$$C_4:=C_3\left(\int_{|h|\leq 2}|h|^{(\tau-s)\mu\gamma}\frac{dh}{|h|^{N}}\right)^{1-\frac{(1-\mu)\gamma}{p}}.$$
This is quite natural, since otherwise one would obtain the limiting inequality 
\[
[u]_{s,\gamma}\leq C\, [u]_{B^s_{p, \infty}}^\mu[u]_{s,p}^{1-\mu},\quad {\rm supp}(u)\subseteq B_1 
\]
which, when combined with \eqref{BL}, would imply the embedding 
$\dot{W}^{s,p}(B_1)\hookrightarrow \dot{W}^{s,\gamma}(B_1)$ for all $\gamma<p$. However, this embedding is false as soon as $s\in\ ]0, 1[$ and $1\leq \gamma <p$; cf. \cite{MiS}.
\end{remark}
\begin{remark}
If $\gamma>\frac{N}{N+s}$ then the interpolation inequality above has a very simple proof. In fact, in such a case, the Sobolev space $W^{s,\gamma}(B_1)$ coincides with the Besov space $B^{s}_{\gamma, \gamma}(B_1)$, for which a complete interpolation theory is available. In particular, since $\tau>s$, \cite[Theorem 3.3.6, iii)]{T} gives 
\[
\left(B^{\tau}_{p, \infty}(B_1); B^s_{p, p}(B_1)\right)_{\mu, \gamma}=B^{\mu \tau+(1-\mu) s}_{p, \gamma}(B_1)
\]
with $(X; Y)_{\mu, \gamma}$ denoting the Lions-Peetre real interpolation space. Hence, 
\[
\|u\|_{B^{\mu \tau+(1-\mu) s}_{p, \gamma}(B_1)}\leq C\, \|u\|_{B^{\tau}_{p, \infty}(B_1)}^\mu\|u\|_{B^{s}_{p, p}(B_1)}^{1-\mu},
\]
where $\|\cdot\|_{B^{\sigma}_{q, r}(B_1)}=[\cdot]_{B^{\sigma}_{q, r}(B_1)}+\|\cdot\|_q$. On the other hand, classical embedding theorems \cite[Theorem 3.3.1, i]{T} yield $B^{\mu \tau+(1-\mu) s}_{p, \gamma}(B_1)\hookrightarrow B_{\gamma, \gamma}^s(B_1)$ because $\tau>s$ and $p>\gamma$, whence one readily infers the interpolation inequality
\[
\|u\|_{B_{\gamma, \gamma}^s(B_1)}\leq C\, \|u\|_{B^{\tau}_{p, \infty}(B_1)}^\mu\|u\|_{B^{s}_{p, p}(B_1)}^{1-\mu}, \quad \tau>s, \ \mu\in\  ]0, 1[.
\]
For us, the issue with this proof is twofold: not only one would need a homogeneous version of the previous inequality, but, more substantially, we will work with values of $\gamma$ which can be smaller than the threshold $\frac{N}{N+s}$.
\end{remark}

\begin{lemma}\label{Glemma}
Suppose $p>1>s>0$, $N>ps$, $\sigma\in \ ]s, 2[$, $r\in[1,\frac{p^*}{p^*-1}]$. If $u\in \dot W^{s,p}(\R^N)\cap \dot B^\sigma_{p, \infty}(\R^N)$ is a radially non-increasing weak solution of $(-\Delta_p)^su=f$, where $f\in L^r(\R^N)$, then
\[
[u]_{s,\gamma}<+\infty \quad \text{for every $\gamma\in \ ]r^*(p-1), p]$}.
\]
\end{lemma}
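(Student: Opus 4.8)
The plan is to transplant to the fractional setting the dyadic‑slicing argument sketched in the Introduction, with the (false) embedding $W^{1,p}(B_R)\hookrightarrow W^{1,\gamma}(B_R)$ replaced by the interpolation inequality of Lemma~\ref{intbesov} and the pointwise control of $u$ supplied by Corollary~\ref{cordecay}. Put $\delta:=\frac{N-psr}{(p-1)r}$; since $r\leq p^*/(p^*-1)$ one checks $p^*/(p^*-1)<N/(ps)$, so $\delta>0$, and Corollary~\ref{cordecay} gives $u(R)\leq C\,R^{-\delta}$ for all $R>0$ (hereafter $C$ may depend on everything but $k$). If $\gamma=p$ there is nothing to prove, so assume $\gamma<p$. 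Set $a_k:=u(2^k)$ and decompose $u=\sum_{k\geq 0}u_k$ into horizontal dyadic slices: $u_0:=(u-a_0)_+$ and, for $k\geq1$, $u_k:=(u-a_k)_+\wedge(a_{k-1}-a_k)$. Each $u_k=g_k(u)$ with $g_k$ non‑decreasing, $1$‑Lipschitz, $g_k(0)=0$ and $g_k'=\chi_{\{a_k<t<a_{k-1}\}}$ (so $(g_k')^{1/p}=g_k'$); moreover $0\leq u_k\leq a_{k-1}$ for $k\geq1$, ${\rm supp}\,u_k\subseteq\overline{B_{2^k}}$, ${\rm supp}\,u_0\subseteq\overline{B_1}$, and each $u_k$ vanishes at infinity. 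Since $u(x)-u(y)=\sum_k(u_k(x)-u_k(y))$ pointwise, it suffices to prove that $[u_k]_{s,\gamma}$ decays geometrically in $k$: by the triangle inequality in $L^\gamma(\R^{2N},|x-y|^{-N-\gamma s}dx\,dy)$ if $\gamma\geq1$, or by subadditivity of $t\mapsto t^\gamma$ (giving $[u]_{s,\gamma}^\gamma\leq\sum_k[u_k]_{s,\gamma}^\gamma$) if $\gamma<1$, this yields $[u]_{s,\gamma}<+\infty$.

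\textbf{Controlling the slices.} Fix $\tau\in\ ]s,\min\{1,\sigma\}[$. A routine Littlewood--Paley estimate — equivalently, real interpolation between $\dot W^{s,p}(\R^N)=\dot B^s_{p,p}(\R^N)$ and $\dot B^\sigma_{p,\infty}(\R^N)$, legitimate since $s<\tau<\sigma$ and $\tau\neq1$ — shows $u\in\dot W^{\tau,p}(\R^N)$, i.e. $[u]_{\tau,p}<+\infty$. As $g_k$ is $1$‑Lipschitz, the Gagliardo seminorms contract: $[u_k]_{\tau,p}\leq[u]_{\tau,p}$ and $[u_k]_{s,p}\leq[u]_{s,p}$; hence, by \eqref{BL}, $[u_k]_{B^\tau_{p,\infty}}\leq C\,[u_k]_{\tau,p}\leq C\,[u]_{\tau,p}=:M$, a bound uniform in $k$. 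Applying Lemma~\ref{intbesov} to each $u_k$ (with $R\sim 2^k$, and $R=1$ for $k=0$) then gives, for every $\mu\in\ ]0,1[$,
\[
[u_k]_{s,\gamma}\ \leq\ C\,2^{k\left(\frac N\gamma-\frac Np+\mu(\tau-s)\right)}\,M^\mu\,[u_k]_{s,p}^{1-\mu}.
\]
In particular $[u_0]_{s,\gamma}\leq C\,M^\mu\,[u]_{s,p}^{1-\mu}<+\infty$, so the lowest slice — which may fail to be bounded near the origin — is already disposed of by finiteness alone, without the equation.

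\textbf{Closing via the equation.} For $k\geq1$ we bound $[u_k]_{s,p}$ through the PDE. Since $u_k=g_k(u)=G_k(u)$ with $G_k$ as in \eqref{defG}, Lemma~\ref{lemmag} gives $[u_k]_{s,p}^p\leq\langle(-\Delta_p)^su,u_k\rangle$; and $u_k$, being bounded with support of finite measure, is a suitable test function, so $\langle(-\Delta_p)^su,u_k\rangle=\int_{\R^N}f\,u_k\,dx$. Hölder's inequality together with Corollary~\ref{cordecay} then yields
\[
[u_k]_{s,p}^p\ \leq\ \|u_k\|_\infty\,\|f\|_{L^1(B_{2^k})}\ \leq\ a_{k-1}\,\|f\|_r\,|B_{2^k}|^{1/r'}\ \leq\ C\,2^{k(N/r'-\delta)},
\]
hence $[u_k]_{s,p}\leq C\,2^{k(N/r'-\delta)/p}$. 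Substituting,
\[
[u_k]_{s,\gamma}\ \leq\ C\,M^\mu\,2^{kE(\mu,\tau)},\qquad E(\mu,\tau):=\frac N\gamma-\frac Np+\mu(\tau-s)+\frac{1-\mu}{p}\Bigl(\frac N{r'}-\delta\Bigr).
\]
Letting $\mu\to0^+$ with $\tau$ fixed, $E(\mu,\tau)\to\frac N\gamma-\frac1p\bigl(\frac Nr+\delta\bigr)=\frac N\gamma-\frac{N-sr}{(p-1)r}$, which is strictly negative precisely because $\gamma>r^*(p-1)=\frac{N(p-1)r}{N-sr}$ (with $r^*:=\frac{Nr}{N-sr}$). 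Thus one fixes $\mu$ small enough that $E(\mu,\tau)<0$; then $\sum_{k\geq1}[u_k]_{s,\gamma}$, and a fortiori $\sum_{k\geq1}[u_k]_{s,\gamma}^\gamma$, converge geometrically, and combined with the bound on $[u_0]_{s,\gamma}$ this gives $[u]_{s,\gamma}<+\infty$.

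\textbf{Main obstacle.} The delicate point is the uniform‑in‑$k$ control of the Besov seminorm $[u_k]_{B^\tau_{p,\infty}}$ of the slices: one cannot estimate $[g_k(u)]_{B^\tau_{p,\infty}}$ directly by $[u]_{B^\tau_{p,\infty}}$, because second‑order differences are not contracted by truncations. The detour through the first‑difference (Gagliardo) seminorm $[\,\cdot\,]_{\tau,p}$, which \emph{is} contracted by $1$‑Lipschitz maps, and back via \eqref{BL}, is what makes the scheme run — and it is exactly this detour that forces the extra regularity hypothesis $u\in\dot B^\sigma_{p,\infty}$ with $\sigma>s$ (furnished, in applications, by Lemma~\ref{regest}). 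A secondary, more routine nuisance is that for $\gamma<1$ neither the triangle inequality nor a usable interpolation theory for the concrete spaces $W^{s,\gamma}(\R^N)$ is available, which is why everything above is arranged so as to survive under mere subadditivity of $t\mapsto t^\gamma$.
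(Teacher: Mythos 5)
Your proposal is correct and follows essentially the same route as the paper: the same horizontal dyadic slicing, the same use of Lemma \ref{lemmag}, H\"older's inequality and Corollary \ref{cordecay} to bound $[u_k]_{s,p}^p\leq C\,2^{k(N/r'-\delta)}$, the same application of Lemma \ref{intbesov} with $R\sim 2^k$, and the same exponent computation showing negativity precisely for $\gamma>r^*(p-1)$, including the Fatou/subadditivity device for $\gamma<1$. The only (minor) deviation is how the uniform-in-$k$ bound on $[u_k]_{B^\tau_{p,\infty}}$ is obtained: you first upgrade $u$ to $\dot W^{\tau,p}$ by real interpolation between $\dot B^s_{p,p}$ and $\dot B^\sigma_{p,\infty}$ and then contract the Gagliardo seminorm under the $1$-Lipschitz truncations before returning to $B^\tau_{p,\infty}$ via \eqref{BL}, whereas the paper interpolates the sup-type seminorms directly as in \eqref{hh} and uses $|\delta_h u_i|\leq|\delta_h u|$ — an equivalent and equally valid variant.
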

\begin{proof} 
Fix $\tau\in\  ]s,\min\{\sigma, 1\}[$ and $\lambda\in \ ]0, 1[$ such that $\tau=\lambda s+(1-\lambda)\sigma$. Since
$$\frac{\|\delta^2_hu\|_p}{|h|^\tau}=\frac{\|\delta^2_hu\|_p^\lambda}{|h|^{\lambda s}}\, \frac{\|\delta^2_hu\|_p^{1-\lambda}}{|h|^{(1-\lambda)\sigma}},$$
the elementary interpolation inequality 
$$[u]_{B^{\tau}_{p, \infty}}\leq [u]_{B^{s}_{p, \infty}}^\lambda[u]_{B^\sigma_{p, \infty}}^{1-\lambda}$$
holds. Thus, on account of \eqref{BL},
\begin{equation}\label{hh}
[u]_{B^{\tau}_{p, \infty}}\leq C\, [u]_{s, p}^\lambda\, [u]_{B^\sigma_{p, \infty}}^{1-\lambda}.
\end{equation}
As already pointed out, $N>ps$ forces $\frac{p^*}{p^*-1}<\frac{N}{ps}$, whence $r<\frac{N}{ps}$. So, due to  \eqref{decay}, we have $\lim_{t\to +\infty} u(t)=0$, where $u(x)=u(|x|)$ by abuse of notation. Now, consider the horizontal dyadic layer cake decomposition  
\begin{equation}\label{hdlcd}
u_0(t):=(u(t)-u(1))_+,\quad u_i(t):=\min\{u(2^{i-1})-u(2^i), (u(t)-u(2^{i}))_+\},\quad i\geq 1,
\end{equation}
of $u$. Setting $A_0:=[0, 1[$ and $A_i:=[2^{i-1}, 2^{i}[$, one can write, whenever $t\in A_k$ for some $k\geq 0$,
\[
u_i(t)=
\begin{cases}
u(2^{i-1})-u(2^{i})&\text{if $i>k$},\\
u(t)-u(2^k)&\text{if $i=k$},\\
0&\text{if $i<k$},
\end{cases}
\]
which means
\begin{equation}\label{HL}
u(t)= u(t)-u(2^k)+\sum_{i=k+1}^{+\infty}[u(2^{i-1})-u(2^{i})]=\sum_{i=0}^{+\infty} u_i(t).
\end{equation}
The above series converges in $L^\infty(\R^N)$ because
\[
\|u_i\|_\infty\le u(2^{i-1})\le \frac{C}{2^{b(i-1)}}\|f\|_r^{\frac{1}{p-1}},\quad\mbox{with}\quad
 b:=\frac{N}{p-1}\left(\frac 1 r-\frac{ps}{N}\right)>0,
\]
due to \eqref{decay} and the monotonicity of $u$. Observe next that $u_i=g_i\circ u$ for some $1$-Lipschitz continuous function $g_i$. Hence, $|\delta_h u_i|\leq |\delta_h u|$ and using \eqref{BL} in \eqref{hh} produces
\[
 [u_i]_{B^{\tau}_{p, \infty}}\leq C\,  [u]_{s, p}^\lambda\, [u]_{B^\sigma_{p, \infty}}^{1-\lambda}.
\]
By Lemma \ref{intbesov}, for every $\mu\in \ ]0, 1[$ there exists a constant $C_\mu=C(N,p,s,\gamma,\mu,\tau)>0$ fulfilling
\[
[u_i]_{s,\gamma}\leq C_\mu\,  2^{i\left(\frac{N}{\gamma}-\frac{N}{p}+\mu(\tau-s)\right)}[u_i]_{B^\tau_{p, \infty}}^\mu[u_i]_{s,p}^{1-\mu}.
\]
Therefore,
\begin{equation}\label{ui}
[u_i]_{s,\gamma}\leq C_\mu\,  2^{i\left(\frac{N}{\gamma}-\frac{N}{p}+\mu(\tau-s)\right)}[u]_{s, p}^{\lambda\mu}\, [u]_{B^\sigma_{p, \infty}}^{\mu(1-\lambda)}\, [u_i]_{s, p}^{1-\mu}.
\end{equation}
Since $u$ is radially non-increasing, $u_i\in W^{s,p}_0(B_{2^i})\cap L^\infty(\R^N)$, $i\geq 0$, namely $u_i$ turns out to be a suitable test function. Via Lemma \ref{lemmag}, besides the properties of $u_i$, we thus obtain
\[
[u_i]_{s,p}^p\leq \langle (-\Delta_p)^s u, u_i\rangle=\int_{\R^N} f\, u_i\, dx\leq\|f\|_{r}\,  u(2^{i-1})\, \omega_N^{\frac{1}{r'}}(2^i)^{\frac{N}{r'}}.
\]
Using Corollary \ref{cordecay}, this entails, 
\[
[u_i]_{s,p}^p\leq C_\mu\, \|f\|_r^{p'}(2^{i})^{\frac{N}{r'}+\frac{N}{p-1}(\frac{ps}{N}-\frac{1}{r})}, 
\]
which, when inserted into \eqref{ui}, gives
\begin{equation}\label{ui2}
[u_i]_{s,\gamma}\leq C_\mu\, \|f\|_r^{\frac{1}{p-1}}\, [u]_{s, p}^{\lambda\mu}\, [u]_{B^\sigma_{p, \infty}}^{\mu(1-\lambda)}\, 2^{iNa_\mu},
\end{equation}
where, to avoid cumbersome formulas,
\[
a_\mu:=a(p, s, \gamma, r, \tau, \mu):=\frac{1}{\gamma}-\frac{1}{p}+\mu(\tau-s)+\frac{1-\mu}{p}\left(\frac{1}{r'}+\frac{1}{p-1}(\frac{ps}{N}-\frac{1}{r})\right).
\]
Finally, since 
\[
\gamma>r^*(p-1)=\frac{Nr(p-1)}{N-sr}\quad \Leftrightarrow \quad a_0=\frac{1}{\gamma}-\frac{1}{p}+\frac{1}{p}\left(\frac{1}{r'}+\frac{1}{p-1}(\frac{ps}{N}-\frac{1}{r})\right)<0
\]
we can find a sufficiently small $\mu>0$ such that $a_\mu<0$. If $\gamma\geq 1$ then \eqref{HL}, the triangle inequality, and \eqref{ui2} yield
\[
[u]_{s,\gamma}\leq C_\mu\, \|f\|_r^{\frac{1}{p-1}}\, [u]_{s, p}^{\lambda\mu}\, [u]_{B^\sigma_{p, \infty}}^{\mu(1-\lambda)}\sum_{i=0}^{+\infty}2^{iNa_\mu}<+\infty,
\]
as desired. So, suppose $r^*(p-1)<1$ and $\gamma \in\  ]r^*(p-1), 1[$. From $\sum_{i=0}^{+\infty} u_i=u$ a.e. in $\R^N$ it follows
\[
\lim_{n\to +\infty} \frac{\left|\sum_{i=0}^{n} u_i(x)-\sum_{i=0}^{n} u_i(y)\right|^\gamma}{|x-y|^{N+\gamma s}}=\frac{|u(x)-u(y)|^\gamma}{|x-y|^{N+\gamma s}}
\]
for almost all $(x, y)\in \R^{2N}$. Now, Fatou's lemma and the subadditivity of $r\mapsto r^\gamma$ lead to
\[
[u]_{s, \gamma}^\gamma\leq \liminf_{n\to +\infty}\int_{\R^{2N}} \frac{\left|\sum_{i=0}^{n} u_i(x)-\sum_{i=0}^{n} u_i(y)\right|^\gamma}{|x-y|^{N+\gamma s}}\, dx\, dy\leq \lim_{n\to +\infty}\sum_{i=0}^n[u_i]_{s,\gamma}^\gamma\, ,
\]
and one can conclude as before using \eqref{ui2}. This completes the proof. 
\end{proof}
Theorem \ref{MT} will be a consequence of Lemmas \ref{Rlemma}, \ref{Glemma}, and the next two.
\begin{lemma}\label{luno}
Let $N>ps$, $q>p$, and $\alpha\in[0,ps[$ satisfy \eqref{scalingrelation}. Let moreover $u\in\dot W^{s,p}(\R^N)$ be a nonnegative minimizer of \eqref{I} with $\lambda:=1$. Then $u$ is radially non-increasing around some point and for appropriate $f\in L^1(\R^N)$ one has $(-\Delta_p)^s u=f$ weakly as per \eqref{locweak}.
\end{lemma}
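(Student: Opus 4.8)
The first assertion is immediate: by Lemma~\ref{radiality} any minimizer is radially non-increasing around a point, which is the origin as soon as $\alpha>0$; and since for $\alpha=0$ problem~\eqref{I} is translation invariant, we may assume in every case that $u=u(|x|)$ is non-increasing in $|x|$.

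For the equation, I would first record the Euler--Lagrange relation. By~\eqref{Ilambda} the minimizer $u$ (with $\lambda=1$) also minimizes the Rayleigh quotient $\mathcal R$ of Lemma~\ref{radiality}, and the Hardy--Sobolev inequality~\eqref{HS} makes $v\mapsto\|v\|_{\alpha,q}^q$ a $C^1$ functional on $\dot W^{s,p}(\R^N)$ whose differential at $u$ is the bounded linear form $\varphi\mapsto q\int_{\R^N}u^{q-1}\varphi\,|x|^{-\alpha}\,dx$. Hence the Lagrange multiplier rule yields $\Lambda\in\R$ with
\[
\langle(-\Delta_p)^su,\varphi\rangle=\Lambda\int_{\R^N}\frac{u^{q-1}\varphi}{|x|^{\alpha}}\,dx\qquad\forall\,\varphi\in\dot W^{s,p}(\R^N),
\]
and testing with $\varphi=u$, together with $[u]_{s,p}^p=I_1>0$, forces $\Lambda>0$. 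Setting $f:=\Lambda\,u^{q-1}|x|^{-\alpha}$, the identity $(-\Delta_p)^su=f$ holds against every element of $\dot W^{s,p}(\R^N)$, hence a fortiori in the sense of~\eqref{locweak}; so the whole content of the lemma reduces to proving $f\in L^1(\R^N)$.

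Near the origin this is harmless: radial monotonicity and $u\in L^{p^*}(\R^N)$ give $u(x)\le C\,\|u\|_{p^*}\,|x|^{-N/p^*}$ for all $x\neq0$, and since $N/p^*=(N-ps)/p$ one has $(q-1)\tfrac{N}{p^*}+\alpha=N-\tfrac{N-ps}{p}<N$, whence $\int_{B_1}f\,dx\le C\int_{B_1}|x|^{-(q-1)N/p^*-\alpha}\,dx<+\infty$. On the exterior region I would also record one piece of information that comes for free: since $(q-1)q'=q$ and $|x|^{-\alpha q'}\le|x|^{-\alpha}$ on $B_1^c$, we have $f^{q'}\le C\,u^q|x|^{-\alpha}$ there, so $f\chi_{B_1^c}\in L^{q'}(\R^N)$ by the constraint $\|u\|_{\alpha,q}=1$, and $f$ is in addition bounded on $B_1^c$ by monotonicity of $u$.

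The delicate point, which I expect to be the main obstacle, is $f\in L^1(B_1^c)$: this is a genuinely \emph{global} fact that is not accessible by H\"older's inequality, since every natural splitting of $\int_{B_1^c}u^{q-1}|x|^{-\alpha}\,dx$ among the available informations ($u\in L^{p^*}(\R^N)$, $u^q|x|^{-\alpha}\in L^1(\R^N)$, $u\in L^\infty(B_1^c)$) comes out exactly at the borderline, short by precisely the gap between the critical decay exponent $N/p^*=(N-ps)/p$ and $(N-ps)/(p-1)$. The plan is therefore to couple the Euler--Lagrange structure $f=\Lambda u^{q-1}|x|^{-\alpha}$ with the regularity results of Section~4 in a bootstrap: the current integrability of $f$ is fed, through the summability and decay estimates of Theorem~\ref{Rlemma} and Corollary~\ref{cordecay} (applicable because $N>ps$ forces $(p^*)'<N/ps$), into a strictly faster pointwise decay of $u$ at infinity, which in turn improves, via the equation, the integrability of $f$. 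The decay exponents produced along the iteration converge to $(N-ps)/(p-1)$, and for that exponent $\int_{B_1^c}u^{q-1}|x|^{-\alpha}\,dx<+\infty$ is \emph{equivalent} to $(q-1)\tfrac{N-ps}{p-1}+\alpha>N$, i.e. to $\alpha<ps$ --- which is exactly where the hypothesis $\alpha\in[0,ps[$ enters. (These nonlocal decay arguments are patterned after \cite{BMS}, where $\alpha=0$.) Once $f\in L^1(\R^N)$ is established, the lemma follows, and, as anticipated in the introduction, it is this $L^1$ bound that will subsequently feed Corollary~\ref{cordecay} to produce the decay estimate~\eqref{as1}.
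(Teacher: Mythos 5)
Your setup is fine as far as it goes: radial monotonicity via Lemma \ref{radiality}, the Euler--Lagrange identity with a positive multiplier, and $f=\Lambda u^{q-1}|x|^{-\alpha}\in L^1(B_1)$ are all correct, and you have isolated the real issue, $f\in L^1(B_1^c)$. The gap is that the bootstrap you propose never produces a strict improvement, because it sits exactly at a fixed point. Theorem \ref{Rlemma} and Corollary \ref{cordecay} require $f\in L^r(\R^N)$ for a \emph{single global} exponent $r\in[1,(p^*)']$. From the a priori information ($u\in L^{p^*}$, the monotone decay $u(x)\le C|x|^{-N/p^*}$, $\|u\|_{\alpha,q}=1$) the best available global exponent is $r=(p^*)'$: at infinity your own computations give $f\in L^r(B_1^c)$ only for $r\ge (p^*)'$ (note $q'\ge(p^*)'$ since $q\le p^*$), while near the origin the crude decay bound gives $f\in L^r(B_1)$ for $r<(p^*)'$ and is borderline at $(p^*)'$ when $\alpha>0$ (for $\alpha=0$ one has exactly $f=\Lambda u^{p^*-1}\in L^{(p^*)'}$). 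Now feed $r=(p^*)'$ into the machinery: since $(p^*)'<N/ps$, estimate \eqref{stimalr1} returns precisely $t=p^*$, and \eqref{decay} returns the decay exponent $\frac{N-psr}{(p-1)r}=\frac{N-ps}{p}=\frac{N}{p^*}$ --- exactly the decay you already had from radial monotonicity, not a "strictly faster" one. Plugging this decay back into $f=\Lambda u^{q-1}|x|^{-\alpha}$ gives, by \eqref{scalingrelation}, $(q-1)\frac{N-ps}{p}+\alpha=N-\frac{N-ps}{p}$, whose integrability threshold at infinity is again exactly $(p^*)'$. So $r=(p^*)'$ is a fixed point of your iteration map, and it is precisely where the a priori information places you. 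The map would indeed push exponents toward $1$ if you could start \emph{strictly} below $(p^*)'$, but nothing in your argument gets you strictly below; your final computation only checks that the limiting rate $\frac{N-ps}{p-1}$ is self-consistent (equivalently $\alpha<ps$), not that the iteration ever leaves its starting point. This is the same borderline you acknowledge for H\"older splittings; the bootstrap does not escape it.

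The missing idea --- and the way the paper actually proceeds, following \cite[Proposition 3.3]{BMS} --- is a nonlinear test-function/absorption argument carried out directly on the Euler--Lagrange identity, with no recourse to Section 4. One tests with $\psi_\eps(u)$, where $\psi_\eps(t):=\int_0^t\bigl[(\eps+\tau)^{-1/q}-\tfrac1q\tau(\eps+\tau)^{-1-1/q}\bigr]^p d\tau$, so that the function $G$ of Lemma \ref{lemmag} is $\Psi_\eps(u)=u(\eps+u)^{-1/q}$. Lemma \ref{lemmag} and the Hardy--Sobolev inequality applied to $\Psi_\eps(u)$ yield
\begin{equation*}
\int_{\R^N}\frac{u^q}{u+\eps}\,\frac{dx}{|x|^{\alpha}}\;\le\; C\left(\int_{\R^N}u^{q-p}\,\frac{u^p}{(u+\eps)^{p/q}}\,\frac{dx}{|x|^{\alpha}}\right)^{q/p}.
\end{equation*}
Splitting the right-hand side over $\{u\ge K\}$ and $\{u<K\}$ and using $q>p$, the low-level part can be reabsorbed into the left-hand side for $K$ large, and letting $\eps\to0^+$ gives $\int u^{q-1}|x|^{-\alpha}\,dx<+\infty$ in one stroke. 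Note that the hypothesis $q>p$ (equivalently $\alpha<ps$) enters through this absorption step, not through a limiting decay rate; the $L^1$ bound is then what feeds Corollary \ref{cordecay} to produce \eqref{as1}, as you anticipated.
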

\begin{proof}
We first stress that such an $u$ exists due to Theorem  \ref{exist}. Moreover, by Lemma \ref{radiality}, it turns out to be radially non-increasing around some point while standard arguments, chiefly based on \eqref{I}, yield
\begin{equation}\label{wf}
\langle (-\Delta_p)^s u, v\rangle= I_1 \int_{\R^N}\frac{u^{q-1}}{|x|^\alpha}\, v\, dx\quad \forall\, v\in \dot{W}^{s,p}(\R^N),
\end{equation}
where the integral at the right-hand side is absolutely convergent because of H\"older's and Hardy-Sobolev's inequalities. The conclusion will be achieved once one verifies that $x\mapsto u(x)^{q-1}/|x|^{\alpha}$ lies in $L^1(\R^N)$. Set, for any $\eps>0$,
\[
\psi_\eps(t):=\int_0^t \left[(\eps+\tau)^{-\frac{1}{q}}-\frac{1}{q}\tau(\eps+\tau)^{-1-\frac{1}{q}}\right]^p\,d\tau,
\quad t\in\R^+_0;
\]
cf. the proof of \cite[Proposition 3.3]{BMS}. The function $\psi_\eps:[0,+\infty)\to\R$ is Lipschitz continuous, increasing, and fulfills
\begin{equation}\label{geps}
0\le \psi_\eps(t)\leq \int_0^t (\eps+\tau)^{-\frac{p}{q}}\,d\tau=\frac{1}{1-\frac{p}{q}}((\eps+t)^{1-\frac{p}{q}}-\eps^{1-\frac{p}{q}})\leq \frac{1}{1-\frac{p}{q}}(\eps+t)^{-\frac{p}{q}}t.
\end{equation}
Further, if
\[
\Psi_\eps(t):=\int_0^t \psi'_\eps(\tau)^\frac{1}{p}\,d\tau=\frac{t}{(\eps+t)^{\frac{1}{q}}}
\]
then \eqref{wf}, written with $v:=\psi_\eps\circ u\in \dot{W}^{s,p}(\mathbb{R}^N)$, and Lemma \ref{lemmag} entail
\[
[\Psi_\eps(u)]_{s,p}^p\leq \langle (-\Delta_p)^s u, \psi_\eps\circ u\rangle= I_1\int_{\R^N}
\frac{u^{q-1}\psi_\eps\circ u}{|x|^\alpha}\, dx.
\]
Using \eqref{HS} on the left-hand term and \eqref{geps} on the right-hand one, we arrive at
\begin{equation}\label{lhrhs}
\int_{\R^N}\frac{u^q}{u+\eps}\frac{dx}{|x|^\alpha}\leq C_1\left(\int_{\R^N}\frac{u^{q-1}\psi_\eps\circ u}{|x|^\alpha}\, dx\right)^{\frac{q}{p}}\leq C_2\left(\int_{\R^N}u^{q-p}\frac{u^p}{(u+\eps)^{\frac{p}{q}}}\frac{dx}{|x|^\alpha}\right)^{\frac{q}{p}}.
\end{equation}
Observe next that to every $\delta>0$ there corresponds $K>0$ satisfying
\[
\int_{\{u<K\}}u^{q}\frac{dx}{|x|^\alpha}<\delta.
\]
Consequently, by H\"older's inequality, 
\begin{equation}\label{bla}
\begin{split}
\int_{\R^N}u^{q-p}\frac{u^p}{(u+\eps)^{\frac{p}{q}}}&\frac{dx}{|x|^\alpha}=\int_{\{u\geq K\}}u^{q-p}
\frac{u^p}{(u+\eps)^{\frac{p}{q}}}\frac{dx}{|x|^\alpha}+\int_{\{u<K\}}u^{q-p}\frac{u^p}{(u+\eps)^{\frac{p}{q}}}
\frac{dx}{|x|^\alpha}\\
&\leq \int_{\{u\geq K\}}u^{q-\frac{p}{q}}\frac{dx}{|x|^\alpha}+\left(\int_{\{u<K\}}u^{q}\frac{dx}{|x|^\alpha}\right)^{1-\frac{p}{q}}\left(\int_{\R^N}\frac{u^q}{u+\eps}\frac{dx}{|x|^\alpha}\right)^{\frac{p}{q}}\\
&\leq \int_{\{u\geq K\}}u^{q-\frac{p}{q}}\frac{dx}{|x|^\alpha}
+\delta^{1-\frac{p}{q}}\left(\int_{\R^N}\frac{u^q}{u+\eps}\frac{dx}{|x|^\alpha}\right)^{\frac{p}{q}}.
\end{split}
\end{equation}
From \eqref{lhrhs}--\eqref{bla} it now follows
\[
\int_{\R^N}\frac{u^q}{u+\eps}\frac{dx}{|x|^\alpha}
\leq C_3\left(\int_{\{u\geq K\}}u^{q-\frac{p}{q}}\frac{dx}{|x|^\alpha}\right)^{\frac{q}{p}}
+C_4\, \delta^{\frac{q}{p}-1}\int_{\R^N}\frac{u^q}{u+\eps}\frac{dx}{|x|^\alpha},
\]
whence 
\[
\int_{\R^N}\frac{u^q}{u+\eps}\frac{dx}{|x|^\alpha}\leq C\left(\int_{\{u\geq K\}}u^{q-\frac{p}{q}}
\frac{dx}{|x|^\alpha}\right)^{\frac{q}{p}}
\]
provided $\delta$ is sufficiently small. Here, $C:=C(p, s, \alpha)$ and $K:=K(p, s, \alpha, u)$. Since $u^{q-\frac{p}{q}}$ belongs to $ L^1_{\rm loc}(dx/|x|^\alpha)$, letting $\eps\to 0^+$ shows the claim. 
\end{proof}
\begin{lemma}\label{final}
Let $N>ps$, $q>p$, $\alpha\in[0,ps[$ satisfy \eqref{scalingrelation} and $u\in\dot W^{s,p}(\R^N)$ be a nonnegative minimizer of \eqref{I} with $\lambda:=1$. Then $u\in L^\infty(\R^N)$ and
$\frac{u^{q-1}}{|x|^\alpha}\in L^r(\R^N)$ for every $r\in [1, \frac{N}{\alpha}[$.
\end{lemma}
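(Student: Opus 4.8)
The plan is to combine the structural information of Lemma \ref{luno} with the decay of Corollary \ref{cordecay} and a Brezis--Kato type argument tailored to the Hardy--Sobolev inequality. By Lemma \ref{luno} we may assume (after a translation if $\alpha=0$) that $u$ is radially non-increasing about the origin, that $(-\Delta_p)^su=f:=I_1\,u^{q-1}|x|^{-\alpha}$ weakly with $f\in L^1(\R^N)$, and that \eqref{wf} holds for every $v\in\dot W^{s,p}(\R^N)$ with absolutely convergent right-hand side. Since $1\le\tfrac{p^*}{p^*-1}$, Corollary \ref{cordecay} applied with $r=1$ yields the sharp decay $u(x)\le C\,\|f\|_1^{1/(p-1)}|x|^{-\frac{N-ps}{p-1}}$; in particular $u\in L^\infty(B_1^c)$, and on $B_1^c$ one has $f\le C\,|x|^{-N-\delta}$ with $\delta:=\tfrac{ps-\alpha}{p-1}>0$ (here $\alpha<ps$ is used), whence $f\in L^r(B_1^c)$ for every $r\ge1$.

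The core step is the estimate $\|u^\beta\|_{\alpha,q}<+\infty$ for every $\beta\ge1$. Fix $\beta\ge1$, $L>0$, and set $g(t):=\mathrm{sgn}(t)\,|t|\,(\min\{|t|,L\})^{p(\beta-1)}$: this $g$ is absolutely continuous, non-decreasing and globally Lipschitz, so $g(u)\in\dot W^{s,p}(\R^N)$ is admissible in \eqref{wf}. Put $w_L:=u\,(\min\{u,L\})^{\beta-1}$, so that $u^{q-1}g(u)=u^{q-p}w_L^p$, and observe that the function $G$ of \eqref{defG} satisfies $G(t)\ge\beta^{-1}w_L(t)$ (a direct computation). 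Applying Lemma \ref{lemmag}, then \eqref{HS} to $G(u)$, then \eqref{wf} gives
\[
\frac{1}{\beta^p C^p}\,\|w_L\|_{\alpha,q}^p\ \le\ \big[G(u)\big]_{s,p}^p\ \le\ \langle(-\Delta_p)^s u,\, g(u)\rangle\ =\ I_1\!\int_{\R^N}\!\frac{u^{q-p}\,w_L^p}{|x|^\alpha}\,dx,
\]
with $C$ the constant in \eqref{HS}. Split the last integral over $\{u\le K\}$ and $\{u>K\}$. On $\{u\le K\}$ the integrand is at most $K^{p(\beta-1)}u^q|x|^{-\alpha}$, so its contribution is $\le K^{p(\beta-1)}\|u\|_{\alpha,q}^q=K^{p(\beta-1)}$ by the normalization $\|u\|_{\alpha,q}=1$. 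On $\{u>K\}$, H\"older's inequality with exponents $q/(q-p)$ and $q/p$ (writing $|x|^{-\alpha}=|x|^{-\alpha\frac{q-p}{q}}|x|^{-\alpha\frac pq}$) bounds the contribution by $\big(\int_{\{u>K\}}u^q|x|^{-\alpha}dx\big)^{\frac{q-p}{q}}\|w_L\|_{\alpha,q}^p$, and the first factor tends to $0$ as $K\to+\infty$ since $u^q|x|^{-\alpha}\in L^1(\R^N)$. Choosing $K=K(\beta)$ so that this term is absorbed into the left-hand side — legitimate because $\|w_L\|_{\alpha,q}^q\le L^{q(\beta-1)}<\infty$ — and then letting $L\to+\infty$ by monotone convergence ($w_L\uparrow u^\beta$), we obtain $\|u^\beta\|_{\alpha,q}\le C(\beta)<+\infty$.

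To conclude: for every $m$, H\"older on $B_1$ (with $|x|^{\alpha/(t-1)}\in L^1(B_1)$) shows that $u\in L^{mt}(B_1,|x|^{-\alpha}dx)$ for a suitable $t>1$ implies $u\in L^m(B_1)$; hence the previous step yields $u\in L^m(B_1)$ for all $m<+\infty$, and together with $u\in L^\infty(B_1^c)$ this gives $u\in L^m(\R^N)$ for every $m\ge p^*$. Therefore $f=I_1u^{q-1}|x|^{-\alpha}\in L^r(\R^N)$ for every $r\in\ ]1,N/\alpha[$: near the origin combine $u\in L^m(B_1)$, $m$ large, with $|x|^{-\alpha}\in L^\sigma(B_1)$, $\sigma<N/\alpha$, via H\"older; away from the origin $f\in L^r(B_1^c)$ for all $r\ge1$ by the first paragraph. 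Since $\alpha<ps$ we may pick $r>N/(ps)$, and then \eqref{stimalr3} of Theorem \ref{Rlemma} forces $u\in L^\infty(\R^N)$; the case $r=1$ of the claim is Lemma \ref{luno} itself.

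The delicate point is the global higher integrability of $u$ in the second paragraph: the right-hand side $f$ is \emph{critical}, so the summability estimates of Theorem \ref{Rlemma} yield no gain by themselves, and the weight $|x|^{-\alpha}$ threatens to worsen matters near the origin, where $u$ could in principle be unbounded. The resolution is to apply the Hardy--Sobolev inequality \eqref{HS} — with the \emph{same} weight and exponent — on the energy side rather than the plain Sobolev inequality, so that the weight appears symmetrically on both sides, while the constraint $\|u\|_{\alpha,q}=1$ guarantees that the non-compact tail $\int_{\{u>K\}}u^q|x|^{-\alpha}$ is small; no separate analysis near the origin is then needed.
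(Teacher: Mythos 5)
Your proof is correct and follows essentially the same route as the paper: a Moser-type step testing \eqref{wf} with truncated powers of $u$, combining Lemma \ref{lemmag} with the Hardy--Sobolev inequality and absorbing the tail $\int_{\{u>K\}}u^q|x|^{-\alpha}\,dx$ to get $\int_{\R^N} u^{\beta q}|x|^{-\alpha}\,dx<+\infty$ for all $\beta\ge 1$, and then deducing $u^{q-1}|x|^{-\alpha}\in L^{\bar r}(\R^N)$ for some $\bar r>N/(ps)$ so that \eqref{stimalr3} gives $u\in L^\infty(\R^N)$. The only cosmetic difference is that you control the region $|x|\ge 1$ via the decay of Corollary \ref{cordecay}, where the paper simply uses $u\le 1$ outside a large ball.
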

\begin{proof}
Given $k>0$, $t\geq 0$, and $\beta\geq 1$, we define $g_\beta(t):=t (t_k)^{\beta-1}$, where $t_k:=\min\{t,k\}$. An easy verification ensures that $g_\beta\circ u\in \dot W^{s,p}(\R^N)$ is a suitable test function in \eqref{wf}. Moreover, since $p\geq 1$, one obtains through elementary considerations
\[
G_\beta(t)\geq \frac{p\,\beta^{1/p}}{p+\beta-1}\, g_{\frac{\beta-1}{p}}(t),
\]
with $G_\beta$ as in \eqref{defG}. Exploiting Lemma \ref{lemmag} and Hardy-Sobolev's inequality entails
\[
C_\beta \left(\int_{\R^N} \frac{g_{\frac{\beta-1}{p}}(u)^q}{|x|^\alpha}\, dx\right)^{\frac{p}{q}}\leq  [G_\beta]_{s,p}^p\leq  \langle (-\Delta_p)^s u, g_\beta\circ u\rangle= \int_{\R^N}\frac{u^q\,  u_k^{\beta-1}}{|x|^\alpha}\, dx
\]
for some $C_\beta=C(N, p, s, \alpha, \beta)$, so that
\begin{equation}\label{mhs}
\left(\int_{\R^N} \frac{u^q\, u_k^{\frac{q}{p}(\beta-1)}}{|x|^\alpha}\, dx\right)^{\frac{p}{q}}\leq C_\beta\int_{\R^N}\frac{u^q\,  u_k^{\beta-1}}{|x|^\alpha}\, dx,
\end{equation}
for another $C_\beta=C(N, p, s, \alpha, \beta)$.
Observe then that for any $K>0$ one has
\[
\begin{split}
\int_{\R^N}\frac{u^q \, u_k^{\beta-1}}{|x|^\alpha}\, dx&\leq \int_{\{u<K\}}\frac{u^q \, u_k^{\beta-1}}{|x|^\alpha}\, dx+\int_{\{u\geq K\}}\frac{u^q\,  u_k^{\beta-1}}{|x|^\alpha}\, dx\\
&\leq K^{\beta-1}\int_{\R^N}\frac{u^q}{|x|^\alpha}\, dx+\left(\int_{\{u\geq K\}}\frac{u^q}{|x|^\alpha}\, dx\right)^{\frac{q-p}{q}}\left(\int_{\R^N}\frac{u^q\, u_k^{\frac{q}{p}(\beta-1)}}{|x|^\alpha}\, dx\right)^{\frac{p}{q}},
\end{split}
\]
where H\"older's inequality with respect to the measure $u^qdx/|x|^\alpha$ has been used on the second term. Since $u^q\in L^1(\R^N, dx/|x|^\alpha)$ and $q>p$, the last term can be reabsorbed on the left of \eqref{mhs} provided $K$ is large enough, thus arriving at
\[
\frac{1}{2} \left(\int_{\R^N} \frac{u^q\, u_k^{\frac{q}{p}(\beta-1)}}{|x|^\alpha}\, dx\right)^{\frac{p}{q}}\leq C_\beta\, K^{\beta-1}\int_{\R^N}\frac{u^q}{|x|^\alpha}\, dx.
\]
Now, let $k\to +\infty$. As $\beta\geq 1$ was arbitrary, we get $u\in L^t(\R^N, dx/|x|^\alpha)$ for all $t\geq q$. By \eqref{stimalr3}, the conclusion $u\in L^\infty(\R^N)$ is achieved once one verifies that $u^{q-1}/|x|^\alpha\in L^{\bar r}(\R^N)$ for some $\bar r>\frac{N}{ps}$. Hence, fix $R_0>1$ fulfilling $u\leq 1$ on $B_{R_0}^c$. Thanks to Lemma \ref{luno}, the function  $f(x):=u(x)^{q-1}/|x|^\alpha$ lies in $L^1(\R^N)$. Moreover, $0\leq f\leq 1$ on $B_{R_0}^c$, whence
\begin{equation}\label{poi}
\int_{B_{R_0}^c} f^\beta\, dx\leq \int_{B_{R_0}^c}f\, dx<+\infty\quad \forall\, \beta\geq 1.
\end{equation}
Finally, we choose  $\bar r\in\ ]\frac{N}{ps},\frac{N}{\alpha}[$ and $t>1$ so large that
\[ 
\alpha(\bar r-\frac{1}{t})t'<N
\]
so that H\"older's inequality with exponents $t$ and $t'$ yields
\[
\int_{B_{R_0}}f^{\bar r} dx=\int_{B_{R_0}}\frac{u^{\bar r(q-1)}}{|x|^{\frac{\alpha}{t}}}\frac{1}{|x|^{\alpha(\bar r-\frac{1}{t})}}\, dx
\leq \left(\int_{\R^N}\frac{u^{\bar r(q-1)t}}{|x|^\alpha}\, dx\right)^{\frac{1}{t}}
\left(\int_{B_{R_0}}\frac{1}{|x|^{\alpha(\bar r-\frac{1}{t})t'}}\, dx\right)^{1-\frac{1}{t}}.
\]
Since both integrals are finite, $f\in L^{\bar r}(\R^N)$, as desired. The remaining conclusion, namely $f\in L^r(\R^N)$ for every $r\in [1, \frac{N}{\alpha}[$, directly follows from the above inequality
and \eqref{poi}.
\end{proof}
\begin{proof}[Proof of Theorem \ref{MT}]
Theorem \ref{exist} provides a nonnegative, radially decreasing minimizer $u$ of \eqref{I}. Lemma \ref{luno} and Corollary \ref{cordecay} give the upper bound in \eqref{as1}, while the lower bound can be achieved via the same argument exploited to show \cite[Corollary 3.7]{BMS}. Indeed, by \cite[Corollary 5.5 and Theorem 5.2]{IMS}, $u$ is continuous on $\R^N\setminus \{0\}$ as well as everywhere positive. 

To get \eqref{as2}, we shall verify the hypotheses of Lemma \ref{Glemma} for $r=1$. The summability request $f:=(-\Delta_p)^s u\in L^1(\R^N)$ is stated in Lemma \ref{luno}. Concerning the regularity of $u$, observe that $N>ps>\alpha$ forces $\frac{p^*}{p^*-1}<\frac{N}{\alpha}$. Consequently, due to Lemma \ref{final}, $u\in L^\infty(\R^N)$ and $f\in L^{\frac{p^*}{p^*-1}}(\R^N)$. We can apply now Lemma \ref{regest}, with $r:=\frac{p^*}{p^*-1}$, $t:=+\infty$, to arrive at $u\in \dot B^\sigma_{p, \infty}(\R^N)$, where $\sigma\in \ ]s, 2[$ is given by \eqref{defsigma}. 

It remains to show that the function $U$ defined in \eqref{talentiane} satisfies \eqref{as1}--\eqref{as2}. Estimate \eqref{as1} is obvious, so we fix $\gamma>\frac{N(p-1)}{N-s}$ and prove \eqref{as2}. If $U_i$, $i\in\N_0$, denotes the horizontal dyadic layer cake decomposition (see, e.g., \eqref{hdlcd}) of $U$  then ${\rm supp}(U_i)\subseteq B_{2^i}$ and
\[
U_i\lfloor_{B_{2^{i-1}}}\equiv U(2^{i-1})-U(2^i)\leq C\, 2^{-i\frac{N-ps}{p-1}},\quad
{\rm Lip}(U_i)\leq C\,2^{-i(\frac{N-ps}{p-1}+1)}
\]
for appropriate $ C=C(p, s, \alpha)>0$. Moreover, $U=\sum_{i=0}^{+\infty} U_i$ pointwise. Using Lemma \ref{23} we obtain
\[
\||D^s U_i|^\gamma\|_\infty \le C_1 \, 2^{-i\gamma\frac{N-s}{p-1}},\qquad
|D^s U_i|^\gamma(x)\le C_2\,  \frac{2^{i(N-\gamma\frac{N-ps}{p-1})}}{|x|^{N+\gamma s}}\quad\forall\,x\in B_{2^{i+1}}^c,
\]
which entails
\[
\begin{split}
[U_i]_{s,\gamma}^\gamma&=\int_{B_{2^{i+1}}} |D^s U_i|^\gamma\, dx+\int_{B_{2^{i+1}}^c} |D^s U_i|^\gamma\, dx\\
&\leq C_1\, 2^{-i\gamma\frac{N-s}{p-1}}|B_{2^{i+1}}|
+C_2\, 2^{i(N-\gamma\frac{N-ps}{p-1})}\int_{B_{2^{i+1}}^c}\frac{dx}{|x|^{N+\gamma s}}\\
&\leq C_3\, 2^{i(N-\gamma\frac{N-s}{p-1})}+C_4\, 2^{i(N-\gamma\frac{N-ps}{p-1})}2^{-i\gamma s}
=C_5\, 2^{i(N-\gamma\frac{N-s}{p-1})}.
\end{split}
\]
Notice that the last exponent is negative, so that proceeding as in the final part of Lemma \ref{Glemma}'s proof gives the claim.
\end{proof}

\end{document}